\newtheorem{thm}{Theorem}[section]
\newtheorem{prop}[thm]{Proposition}
\newtheorem{lem}[thm]{Lemma}
\newtheorem{cor}[thm]{Corollary}
\newtheorem{conj}[thm]{Conjecture}
\newtheorem{ques}[thm]{Question}
\theoremstyle{definition}
\theoremstyle{remark}
\newtheorem{rem}[thm]{Remark}
\numberwithin{equation}{section}
\DeclareMathOperator{\Ext}{Ext}
\DeclareMathOperator{\Hom}{Hom}
\DeclareMathOperator{\soc}{soc}
\DeclareMathOperator{\tp}{top}
\DeclareMathOperator{\dimv}{\underline{dim}}
\DeclareMathOperator{\Irr}{Irr}
\DeclareMathOperator{\wt}{wt}
\DeclareMathOperator{\GL}{GL}
\DeclareMathOperator{\id}{id}
\DeclareMathOperator{\conv}{conv}
\DeclareMathOperator{\Pol}{Pol}
\newcommand\bfi{\mathbf{i}}
\newcommand\tildef{\widetilde{f}}
\newcommand\tildee{\widetilde{e}}
\newcommand\hatfrakg{\hat{\mathfrak{g}}}
\newcommand\hatI{\hat{I}}
\newcommand\hatC{\widehat{C}}
\newcommand\hatf{\hat{f}}
\newcommand\hath{\hat{h}}
\newcommand\hatw{\hat{w}}
\newcommand\hatW{\widehat{W}}
\newcommand\hats{\hat{s}}
\newcommand\hatM{\widehat{M}}
\newcommand\hatP{\widehat{P}}
\newcommand\hatmu{\hat{\mu}}
\newcommand\hatgamma{\hat{\gamma}}
\newcommand\hatpi{\hat{\varpi}}
\newcommand\hatMV{\widehat{\mathcal{MV}}}
\begin{document}

\title[The crystal structure for MV polytopes]{An insight into the description of the crystal structure for Mirkovi\'{c}-Vilonen polytopes}

\author{Yong Jiang}

\address{Fakult\"{a}t f\"{u}r Mathematik, Universit\"{a}t Bielefeld, Postfach 10 01 31, D-33501 Bielefeld, Germany}

\curraddr{Leibniz-Institut f\"{u}r Pflanzengenetik und Kulturpflanzenforschung (IPK), Corrensstrasse 3, D-06466 Stadt Seeland OT Gatersleben, Germany}

\email{jiang@ipk-gatersleben.de}

\thanks{Y. Jiang was supported by the Sonderforschungsbereich 701 in Universit\"{a}t Bielefeld.}

\author{Jie Sheng}

\address{Department of Applied Mathematics, China Agricultural University, 100083 Beijing, P.R.China}

\address{Mathematisches Institut, Universit\"{a}t Bonn, Endenicher Allee 60, 53115 Bonn, Germany}

\email{shengjie@amss.ac.cn}

\thanks{J. Sheng was supported by NSF of China (No.11301533).}

\subjclass[2010]{Primary 05E10; Secondary 16G20 17B20}

%\date{January 1, 2001 and, in revised form, June 22, 2001.}

%\dedicatory{This paper is dedicated to our advisors.}

\keywords{Mirkovi\'{c}-Vilonen polytope, crystal, preprojective algebra, diagram automorphism}

\begin{abstract}
  We study the description of the crystal structure on the set of Mirkovi\'{c}-Vilonen polytopes. Anderson and Mirkovi\'{c} defined an operator and conjectured that it coincides with the Kashiwara operator. Kamnitzer proved the conjecture for type $A$ and gave an counterexample for type $C_{3}$. He also gave an explicit formula to calculate the Kashiwara operator for type $A$. In this paper we prove that a part of the AM conjecture still holds in general, answering an open question of Kamnitzer (2007). Moreover, we show that although the formula given by Kamnitzer does not hold in general, it is still valid in many cases regardless of the type. The main tool is the connection between MV polytopes and preprojective algebras developed by Baumann and Kamnitzer.
\end{abstract}

\maketitle

\section{Introduction}\label{sec introduction}
Let $G$ be a complex connected reductive group and $G^{\vee}$ be its Langlands dual group. In the geometric Satake correspondence, the intersection cohomology of the affine Grassmanian associated with $G$ provides a geometric realization of the irreducible highest weight representations $V(\lambda)$ of $G^{\vee}$ and a family of subvarieties, called Mirkovi\'{c}-Vilonen (MV) cycles, forms a basis of $V(\lambda)$ \cite{MV}. Attempting to understand combinatorial aspects of $V(\lambda)$ using MV cycles, Anderson defined MV polytopes as moment polytopes of MV cycles \cite{An}. He proved that MV polytopes can be used to count weight multiplicities as well as tensor product multiplicities without giving a full combinatorial description of the polytopes. Later, Kamnitzer gave a complete combinatorial characterization of MV polytopes as pseudo-Weyl polytopes satisfying tropical Pl\"{u}cker relations \cite{Kam2} (see Section \ref{subsec MV polytopes}).

The set of MV polytopes, denoted by $\mathcal{MV}$, naturally inherits a crystal structure via an explicit bijection to Lusztig's canonical basis \cite{Kam2} (see Theorem \ref{thm Kamnitzer 1}). Let $\tildef_{j}$ be the Kashiwara operator. Kamnizter studied the crystal structure on $\mathcal{MV}$ and gave a description of $\tildef_{j}(P)$ for polytope $P\in\mathcal{MV}$ \cite{Kam1} (see Theorem \ref{thm Kamnitzer}). However, this description is non-explicit because it requires to solve many equations given by the tropical Pl\"{u}cker relations, which involves addition and taking minimum.

Anderson and Mirkovi\'{c} proposed a conjecture to describe the crystal structure on $\mathcal{MV}$, which was called Anderson-Mirkovi\'{c} (AM) conjecture in \cite{Kam1} (see Conjecture \ref{conj AM}). More precisely, they defined a new operator $AM_{j}$ acting on $\mathcal{MV}$ and conjectured that it is the same as the Kashiwara operator $\tildef_{j}$. By definition, for any MV polytope $P$, $AM_{j}(P)$ is the smallest pseudo-Weyl polytope satisfying four conditions concerning $P$ (see Section \ref{subsec the AM conjecture}). The description given by the operator $AM_{j}$ does not require the tropical Pl\"{u}cker relations and hence was considered more explicit than the one given by Kamnitzer.

In case of type $A$, Kamnitzer proved the AM conjecture and gave an explicit formula for the Kashiwara operators \cite{Kam1}. He also gave a counterexample of the conjecture for type $C_{3}$. An alternative proof for type $A$ was given by Saito \cite{Sai2} via the connection between MV polytopes and representations of quivers. Naito and Sagaki \cite{NS} proved modified versions of the AM conjecture for type $B$, $C$ and also gave explicit formulas for the Kashiwara operators using diagram automorphisms. As far as we know, up to now there is no result concerning type $D$, $E$ or other non-simply-laced types.

Our study was motivated by an interesting question raised by Kamnitzer in \cite{Kam1}. He found in his counterexample for type $C_{3}$ that $AM_{j}(P)\subset\tildef_{j}(P)$. It means that $\tildef_{j}(P)$ is indeed a pseudo-Weyl polytope satisfying the four conditions required by $AM_{j}$, but it is not the smallest one. Hence he asked:
\begin{ques}[\cite{Kam1}, Question 2]\label{ques Kamnitzer}
Is $AM_{j}(P)$ always contained in $\tildef_{j}(P)$?
\end{ques}

The aim of this paper is to give an affirmative answer to the above question. Namely, we proved that for any MV polytope $P$, $AM_{j}(P)\subseteq\tildef_{j}(P)$ holds in general (see Theorem \ref{thm main simply-laced case}). In our proof we treat the simply-laced case and the non-simply-laced case separately. In the simply-laced case, Baumann and Kamnitzer has recently established a link between MV polytopes and representations of preprojective algebras \cite{BK}. Their results shed new light on the theory of MV polytopes and is the main tool for us to prove our theorem. In the non-simply-laced case, a diagram automorphism $\sigma$ on the Lie algebra $\mathfrak{g}$ induces a action on $\mathcal{MV}$ and the set $\hatMV$ for $\mathfrak{g}^{\sigma}$ is in bijection with the $\sigma$-invariant MV polytopes in $\mathcal{MV}$ (\cite{NS}, \cite{H}). Therefore, having the theorem proved for the simply-laced case, we can use diagram automorphisms to obtain the corresponding results for the non-simply-laced case.

We also show that although the explicit formula given by Kamnitzer in \cite{Kam1} for type $A$ does not hold in general, it is still true in many special cases (see Theorem \ref{thm minuscule} and \ref{thm simply-laced case}). All our results are type-independent.

Note that it remains open whether the AM conjecture holds for all simply-laced cases as we did not find any counterexamples in type $D$ or $E$.

The paper is organized as follows. In Section \ref{sec MV polytopes and crystal}, we recall the definition of MV polytopes and the crystal structure on the set of them. In Section \ref{sec AM conjecture and main results}, we introduce the AM conjecture and state our main results. We then recall the link between MV polytopes and representations of preprojective algebras in Section \ref{sec MV polytopes and preproj algs}. The proofs of our results are presented in the last two sections. The simply-laced case is treated in Section \ref{sec proof simply-laced}, while in Section \ref{sec proof non-simply-laced} we focus on the non-simply-laced case.

\section{Mirkovi\'{c}-Vilonen polytopes and the crystal structure}\label{sec MV polytopes and crystal}

\subsection{Notations}\label{subsec notations}
Let $\mathfrak{g}$ be a finite-dimensional complex simple Lie algebra of rank $n$ and $\mathfrak{h}$ be a Cartan subalgebra of $\mathfrak{g}$. Let $C=(c_{ij})$ be the Cartan matrix. The vertices of the Dynkin diagram of $\mathfrak{g}$ is denoted by $I=\{1,\ldots,n\}$. Let $Q$ be the root system and $Q_{+}$ (resp. $Q_{-}$) be the positive (resp. negative) root lattice. For $1\leq i \leq n$, denote by $\alpha_{i}$ (resp. $h_{i}$) the simple root (resp. simple coroot). Let $P$ be the weight lattice and $P_{+}$ be the set of dominant weights. The $i$-th fundamental weight is denoted by $\varpi_{i}$. Let $P^{\ast}$ be the coweight lattice and $\mathfrak{h}_{\mathbb{R}}=P^{\ast}\otimes\mathbb{R}$ be the real form of $\mathfrak{h}$.

Let $W$ be the Weyl group of $\mathfrak{g}$. For any $w\in W$, the length of $w$ is denoted by $\ell(w)$. Let $w_{0}$ be the longest element in $W$ and $r=\ell(w_{0})$. Denote by $s_{1},\ldots,s_{n}$ the simple reflections. A Weyl group translate of a fundamental weight $w\varpi_{i}$ is called a chamber weight. The collection of all chamber weights is denoted by $\Gamma=\{w\varpi_{i}|w\in W,i\in I\}$.

The Bruhat order on $W$ is denoted by $\geq$. We will use the same notation for the usual partial order on $P^{\ast}$, namely $\mu\geq\nu$ if and only if $\mu-\nu\in\sum_{i\in I}\mathbb{N}h_{i}$. The twisted partial order $\geq_{w}$ for $w\in W$ is defined as $\mu\geq_{w}\nu$ if and only if $w^{-1}\mu\geq w^{-1}\nu$.

Let $\mathfrak{g}^{\vee}$ be the Langlands dual of $\mathfrak{g}$, namely the Lie algebra whose root system is dual to that of $\mathfrak{g}$. In particular, the weight lattice of $\mathfrak{g}^{\vee}$ is $P^{\ast}$.

\subsection{Mirkovi\'{c}-Vilonen polytopes}\label{subsec MV polytopes}
We recall the combinatorial definition of MV polytopes following \cite{Kam1}.

For a collection of integers $M_{\bullet}=(M_{\gamma})_{\gamma\in\Gamma}$ indexed by the set of chamber weights, the following inequalities are called \textit{edge inequalities}:
\begin{equation}\label{equ edge inequality}
M_{w\varpi_{i}}+M_{ws_{i}\varpi_{i}}+\sum_{j\neq i}c_{ji}M_{w\varpi_{j}}\leq 0,\quad \text{for all }i\in I,w\in W.
\end{equation}

Given $M_{\bullet}$ satisfying the edge inequalities, we have the associated \textit{pseudo-Weyl polytope}
$$P(M_{\bullet})=\{h\in\mathfrak{h}_{\mathbb{R}}|\langle h,\gamma\rangle\geq M_{\gamma}\ \text{for all}\ \gamma\}.$$

As shown in \cite{Kam2}, there is a map $w\mapsto \mu_{w}$ from the Weyl group onto the set of vertices of the polytope $P(M_{\bullet})$ such that
$$\langle\mu_{w},w\varpi_{i}\rangle=M_{w\varpi_{i}}, \quad \text{for all } i \in I,w\in W.$$
The collection $\mu_{\bullet}=(\mu_{w})_{w\in W}$ is called the \textit{Gelfand-Goresky-MacPherson-Serganova (GGMS) datum} of the pseudo-Weyl polytope $P(M_{\bullet})$. And we have
$$P(M_{\bullet})=\conv(\mu_{\bullet})=\{h\in\mathfrak{h}_{\mathbb{R}}|h\geq_{w}\mu_{w},\ \text{for all}\ w\in W\}.$$

Let $w\in W$ and $i,j\in I$ be such that $ws_{i}>w$, $ws_{j}>w$ and $i\neq j$. We say that $M_{\bullet}=(M_{\gamma})_{\gamma\in\Gamma}$ satisfies the \textit{tropical Pl\"{u}cker relation} at $(w,i,j)$ if $c_{ij}=0$ or $c_{ij}=c_{ji}=-1$ and
\begin{equation}\label{equ tropical Plucker relation}
M_{ws_{i}\varpi_{i}}+M_{ws_{j}\varpi_{j}}=\min\{M_{w\varpi_{i}}+M_{ws_{i}s_{j}\varpi_{j}},M_{ws_{j}s_{i}\varpi_{i}}+M_{w\varpi_{j}}\}.
\end{equation}
The relations for other possible values of $c_{ij},c_{ji}$ are omitted as they will not be used in this paper. We refer to \cite{Kam2} for details.

We say that $M_{\bullet}$ satisfies the tropical Pl\"{u}cker relations if it satisfies the tropical Pl\"{u}cker relation at each $(w,i,j)$.

The pseudo-Weyl polytope $P(M_{\bullet})$ is called a \textit{Mirkovi\'{c}-Vilonen (MV) polytope} of weight $(\mu_{1},\mu_{2})$ if $M_{\bullet}$ satisfies the edge inequalities and $\mu_{e}=\mu_{1}$ and $\mu_{w_{0}}=\mu_{2}$. In this case, $M_{\bullet}$ is called a \textit{Berenstein-Zelevinsky (BZ) datum} of weight $(\mu_{1},\mu_{2})$.

The weight lattice $P$ acts on $\mathfrak{h}_{\mathbb{R}}$ by translation. Hence we have an action of $P$ on the set of MV polytopes. Namely for $\nu\in P$, $\nu+P(M_{\bullet})=P(M_{\bullet}')$ where $M_{\gamma}'=M_{\gamma}+\langle\nu,\gamma\rangle$. The orbit of an MV polytope of weight $(\mu_{1},\mu_{2})$ is called a \textit{stable MV polytope} of weight $(\mu_{1}-\mu_{2})$. Let $\mathcal{MV}$ denote the set of stable MV polytopes. In this paper, we always consider representatives in $\mathcal{MV}$ with $\mu_{w_{0}}=0$.

\subsection{The crystal structure}\label{subsec the crystal structure}
We recall the definition of abstract crystals following \cite{Ka2}. For more details on the theory of crystal bases we refer to \cite{Ka1}.

A $\mathfrak{g}$-\textit{crystal} is a set $B$ with maps $\wt:B\rightarrow P$, $\varepsilon_{i},\varphi_{i}:B\rightarrow\mathbb{Z}\cup\{-\infty\}$ and $\tildee_{i},\tildef_{i}:B\rightarrow B\cup\{0\}$ for all $i\in I$, satisfying the following axioms:

(1). For any $b\in B$, $\varphi_{i}(b)=\varepsilon_{i}(b)+\langle h_{i},\wt(b)\rangle$.

(2). If $\tildee_{i}(b)\in B$, then $\wt(\tildee_{i}(b))=\wt(b)+\alpha_{i}$, $\varepsilon_{i}(\tildee_{i}(b))=\varepsilon_{i}(b)-1$ and $\varphi_{i}(\tildee_{i}(b))=\varphi_{i}(b)+1$.

(3). If $\tildef_{i}(b)\in B$, then $\wt(\tildef_{i}(b))=\wt(b)-\alpha_{i}$, $\varepsilon_{i}(\tildef_{i}(b))=\varepsilon_{i}(b)+1$ and $\varphi_{i}(\tildef_{i}(b))=\varphi_{i}(b)-1$.

(4). For $b,b'\in B$, we have $b'=\tildee_{i}(b)$ if and only if $b=\tildef_{i}(b')$.

(5). For $b\in B$, if $\varphi_{i}(b)=-\infty$, then $\tildee_{i}(b)=\tildef_{i}(b)=0$.

Let $\mathbf{B}$ be Lusztig's canonical basis for the positive part of the quantized enveloping algebra $U_{q}^{+}(\mathfrak{g})$ \cite{Lu1}. It is known that $\mathbf{B}$ carries a $\mathfrak{g}$-crystal structure.

Let $\bfi=(i_{1},\ldots,i_{r})$ be a reduced word of $w_{0}$. For $1\leq k\leq r$, let $w_{k}^{\bfi}=s_{i_{1}}\cdots s_{i_{k}}$. The \textit{$\bfi$-Lusztig datum} of an MV polytope $P(M_{\bullet})$ is $n_{\bullet}=(n_{1},\ldots,n_{r})\in\mathbb{N}^{r}$ defined by
$$n_{k}=-M_{w_{k-1}^{\bfi}\varpi_{i_{k}}}-M_{w_{k}^{\bfi}\varpi_{i_{k}}}-\sum_{j\neq i_{k}}a_{ji_{k}}M_{w_{k}^{\bfi}\varpi_{j}}.$$

The meaning of the $\bfi$-Lusztig datum of an MV polytope is the following \cite{Kam1}: the reduced word $\bfi$ determines a path $e=\mu_{w_{0}^{\bfi}}$, $\mu_{w_{1}^{\bfi}}$, $\cdots$, $\mu_{w_{r}^{\bfi}}=w_{0}$ through the $1$-skeleton of the polytope. The integers $(n_{1},\ldots,n_{r})$ are just the lengths of the edges along the path.

Let $\mathbf{B}^{\vee}$ be the canonical basis of $U_{q}^{+}(\mathfrak{g}^{\vee})$. For any reduced word $\bfi$ of $w_{0}$, Lusztig showed that there is a bijection $\phi_{\bfi}:\mathbf{B}^{\vee}\rightarrow\mathbb{N}^{r}$, known as Lusztig's parametrization of the canonical basis $\mathbf{B}^{\vee}$. $\phi_{\bfi}(b)$ is called the $\bfi$-Lusztig datum of $b$.

\begin{thm}[\cite{Kam2}]\label{thm Kamnitzer 1}
There is a coweight-preserving bijection $b\mapsto P(b)$ between the canonical basis $\mathbf{B}^{\vee}$ and the set of MV polytopes $\mathcal{MV}$. Under this bijection, the $\bfi$-Lusztig datum of $b$ equals that of $P(b)$.
\end{thm}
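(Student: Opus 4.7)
The plan is to use Lusztig data as common coordinates on both sides of the proposed bijection and then to verify that the coordinate change between different reduced words agrees on the $\mathbf{B}^{\vee}$-side and on the $\mathcal{MV}$-side. Fix a reduced word $\bfi=(i_{1},\ldots,i_{r})$ of $w_{0}$. On the canonical basis side, Lusztig's parametrization $\phi_{\bfi}:\mathbf{B}^{\vee}\to\mathbb{N}^{r}$ is already a bijection. On the polytope side, I would show that from any $(n_{1},\ldots,n_{r})\in\mathbb{N}^{r}$ one can reconstruct a unique pseudo-Weyl polytope whose $\bfi$-path through the $1$-skeleton realizes $n_{k}$ as the length of its $k$-th edge along the coroot $w_{k-1}^{\bfi}h_{i_{k}}$: set $\mu_{w_{0}^{\bfi}}=0$ and inductively $\mu_{w_{k}^{\bfi}}=\mu_{w_{k-1}^{\bfi}}+n_{k}\,w_{k-1}^{\bfi}h_{i_{k}}$, then read off the chamber-weight datum via $M_{w\varpi_{i}}=\langle\mu_{w},w\varpi_{i}\rangle$ at path vertices and propagate to all of $\Gamma$ using the edge inequalities.

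The heart of the proof is the following compatibility. For two reduced words $\bfi,\bfi'$ related by a single braid move at some $(w,i,j)$, the induced transition $\mathbb{N}^{r}\to\mathbb{N}^{r}$ on $\bfi$-Lusztig data of polytopes is dictated by the tropical Pl\"{u}cker relation \eqref{equ tropical Plucker relation} together with the analogous relations for the doubly-laced and triply-laced rank-two subsystems (omitted in the excerpt). I would then match this, case by case in the rank-two types $A_{1}\times A_{1}$, $A_{2}$, $B_{2}$, $G_{2}$, against the tropicalization of Lusztig's subtraction-free change-of-parameter map $\phi_{\bfi'}\circ\phi_{\bfi}^{-1}$. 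This rank-two verification is the technical core of the argument, and is precisely where tropical Pl\"{u}cker relations and tropicalized Lusztig transition formulas are seen to coincide.

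Once this compatibility is granted, the bijection is defined by declaring $P(b)$ to be the unique MV polytope whose $\bfi$-Lusztig datum equals $\phi_{\bfi}(b)$; this is independent of the choice of $\bfi$ by the compatibility just verified, so $b\mapsto P(b)$ is a well-defined bijection, and the agreement of $\bfi$-Lusztig data is built into the construction. Coweight-preservation follows because $\sum_{k}n_{k}\,w_{k-1}^{\bfi}h_{i_{k}}\in P^{\ast}$ is simultaneously the total path displacement $\mu_{w_{0}}-\mu_{e}$ of the polytope and the weight of $b$ viewed in $U_{q}^{+}(\mathfrak{g}^{\vee})$, whose weight lattice is $P^{\ast}$. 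The main obstacle is the rank-two braid verification: without it, one does not know either that a consistent polytope can be assembled from a single Lusztig datum, or that the two parametrizations synchronize across braid moves.
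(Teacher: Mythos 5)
This theorem is not proved in the paper; it is imported from Kamnitzer's \cite{Kam2}, where the argument is precisely the one you sketch: coordinatize both $\mathbf{B}^{\vee}$ (via Lusztig's parametrization) and $\mathcal{MV}$ (via edge lengths along the $\bfi$-path) by elements of $\mathbb{N}^{r}$, verify that the transition maps under braid moves agree rank-two type by rank-two type, and declare the bijection. So your overall plan reproduces the cited source's strategy, and your identification of the rank-two braid verification as the technical core is on the mark, as is the coweight-preservation argument via the total path displacement.

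One step is stated imprecisely in a way that matters. You propose to ``read off the chamber-weight datum at path vertices and propagate to all of $\Gamma$ using the edge inequalities.'' The edge inequalities are inequalities satisfied by every pseudo-Weyl polytope; they constrain the $M_{\gamma}$ but do not determine them, so they cannot propagate the datum from the $\bfi$-path to the rest of $\Gamma$. What determines the remaining $M_{\gamma}$ is the system of \emph{tropical Pl\"ucker relations} \eqref{equ tropical Plucker relation} (together with their doubly- and triply-laced analogues): given the values $M_{w_{k}^{\bfi}\varpi_{i}}$ along one chamber path, the tropical Pl\"ucker relations recursively fix $M_{\gamma}$ for all $\gamma\in\Gamma$, and the well-posedness and $\bfi$-independence of this reconstruction is exactly what the rank-two compatibility you highlight is needed for. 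With ``edge inequalities'' replaced by ``tropical Pl\"ucker relations'' in that sentence, your outline is an accurate summary of the proof in \cite{Kam2}.
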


Thus the set $\mathcal{MV}$ inherits a crystal structure from the canonical basis. The action of the Kashiwara operator $\tildef_{j}$ ($j\in I$) can be described as follows:

\begin{thm}[\cite{Kam1}]\label{thm Kamnitzer}
Let $P=P(M_{\bullet})$ be an MV polytope and assume that $\widetilde{f}_{j}P=P(M_{\bullet}')$. Then $M_{\bullet}'$ is uniquely determined by $M_{\varpi_{j}}'=M_{\varpi_{j}}-1$ and $M_{\gamma}'=M_{\gamma}$ for $\gamma=w\varpi_{i}$ with $s_{j}w<w$.
\end{thm}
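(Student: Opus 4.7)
My plan is to invoke Theorem \ref{thm Kamnitzer 1} to transfer the problem to Lusztig's parametrization of $\mathbf{B}^{\vee}$, where the Kashiwara operator $\tildef_{j}$ acts explicitly when the chosen reduced word begins with $j$. Fix a reduced word $\bfi=(i_{1},\ldots,i_{r})$ of $w_{0}$ with $i_{1}=j$, and write $(n_{1},\ldots,n_{r})$ for the $\bfi$-Lusztig datum of $P$. A standard fact about the canonical basis (transported via Theorem \ref{thm Kamnitzer 1}) asserts that the $\bfi$-Lusztig datum of $\tildef_{j}P$ is $(n_{1}+1,n_{2},\ldots,n_{r})$. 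Geometrically, this means that the path $\mu_{e}=\mu_{w_{0}^{\bfi}},\mu_{s_{j}}=\mu_{w_{1}^{\bfi}},\ldots,\mu_{w_{r}^{\bfi}}=\mu_{w_{0}}$ in the stable representative of $\tildef_{j}P$ agrees edge-for-edge with the one of $P$, except that its initial edge is longer by one unit.

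To verify the two conditions on $M_{\bullet}'$, I would reconstruct the path from $\mu_{w_{0}}=0$ upwards. Since $\mu_{w_{0}}$ is fixed and since the edge directions (determined by $\bfi$) and the lengths $n_{2},\ldots,n_{r}$ are unchanged, each $\mu_{w_{k}^{\bfi}}$ with $k\geq 1$ remains the same, whereas $\mu_{e}$ is shifted by one unit in the $h_{j}$ direction; in particular $M_{\varpi_{j}}'=\langle\mu_{e}',\varpi_{j}\rangle=M_{\varpi_{j}}-1$. For the second condition, observe that $s_{j}w<w$ is equivalent to $w$ admitting a reduced expression beginning with $s_{j}$; any such expression extends to a reduced word of $w_{0}$ starting with $j$, so $w$ occurs as $w_{k}^{\bfi}$ for some $k\geq 1$ for an appropriate choice of $\bfi$. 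Hence $\mu_{w}$ is unchanged under $\tildef_{j}$, and consequently $M_{w\varpi_{i}}'=\langle\mu_{w},w\varpi_{i}\rangle=M_{w\varpi_{i}}$ for every $i\in I$.

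For uniqueness, by Theorem \ref{thm Kamnitzer 1} it suffices to verify that the prescribed values of $M_{\bullet}'$ recover the $\bfi$-Lusztig datum $(n_{1}+1,n_{2},\ldots,n_{r})$ of $\tildef_{j}P$. Substituting $M_{\bullet}'$ into
$$n_{k}'=-M_{w_{k-1}^{\bfi}\varpi_{i_{k}}}'-M_{w_{k}^{\bfi}\varpi_{i_{k}}}'-\sum_{l\neq i_{k}}a_{li_{k}}M_{w_{k}^{\bfi}\varpi_{l}}',$$
one notes that for $k\geq 2$ both $w_{k-1}^{\bfi}$ and $w_{k}^{\bfi}$ begin with $s_{j}$, so every $M'$-value appearing is fixed by the second condition and $n_{k}'=n_{k}$; for $k=1$ the only $M'$-value differing from $M$ is $M_{\varpi_{j}}'=M_{\varpi_{j}}-1$, giving $n_{1}'=n_{1}+1$. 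This determines the $\bfi$-Lusztig datum of $M_{\bullet}'$, and hence $M_{\bullet}'$ itself. The main subtlety is nailing down the orientation/sign convention that relates an increment of $n_{1}$ to the geometric displacement of $\mu_{e}$; once this is in place the rest is routine Weyl-group bookkeeping and unwinding the definition of the Lusztig datum.
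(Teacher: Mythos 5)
This theorem is imported from \cite{Kam1} and is not proved in the present paper, so there is no in-paper argument to compare against; the evaluation below is of your proposal on its own terms. Your route is correct and is in substance the argument in \cite{Kam1}: transfer to the Lusztig parametrization of $\mathbf{B}^{\vee}$ via Theorem \ref{thm Kamnitzer 1}, invoke the standard fact that $\phi_{\bfi}(\tilde{f}_{j}b)=\phi_{\bfi}(b)+(1,0,\ldots,0)$ when $i_{1}=j$, and unwind into BZ data. The sign subtlety you flag does resolve as you expect: with the stable representative normalized by $\mu_{w_{0}}=0$, the unchanged lengths $n_{2},\ldots,n_{r}$ pin down $\mu_{w_{k}^{\bfi}}$ for all $k\geq 1$, while $\mu_{e}'=\mu_{s_{j}}-(n_{1}+1)h_{j}=\mu_{e}-h_{j}$ gives $M'_{\varpi_{j}}=M_{\varpi_{j}}-1$; and your observation that every $w\in W_{j}^{-}$ occurs as some $w_{k}^{\bfi}$ ($k\geq 1$) for a suitable $\bfi$ starting with $j$ (together with the fact that $\tilde f_{j}$ is independent of the choice of $\bfi$) correctly yields $M'_{\gamma}=M_{\gamma}$ on all of $\Gamma^{j}$.
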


As pointed out in \cite{Kam1}, this description is non-explicit because the rest of $M_{\gamma}'$ are determined by the tropical Pl\"{u}cker relations. To calculate them one needs to recursively solve a number of equations involving $+$ and taking minimum. Hence it is interesting to seek a more explicit description or even an explicit formula to calculate the rest of $M_{\gamma}'$.

\section{The Anderson-Mirkovi\'{c} conjecture and main results}\label{sec AM conjecture and main results}

\subsection{The Anderson-Mirkovi\'{c} conjecture}\label{subsec the AM conjecture}
For $j\in I$ we define $W_{j}^{-}=\{w\in W|s_{j}w<w\}$, $W_{j}^{+}=\{w\in W|s_{j}w>w\}$. Let $\Gamma^{j}=\cup_{i\in I}W_{j}^{-}\varpi_{i}$ and $\Gamma_{j}=\Gamma\setminus\Gamma^{j}$.

Let $P=P(M_{\bullet})$ be an MV polytope with GGMS datum $\mu_{\bullet}$ and fix $j\in I$. Let $c_{j}(P)=M_{\varpi_{j}}-M_{s_{j}\varpi_{j}}-1$. We define a map  $r_{j}:\mathfrak{h}_{\mathbb{R}}\rightarrow\mathfrak{h}_{\mathbb{R}}$ by $r_{j}(\alpha)=s_{j}(\alpha)+c_{j}h_{j}$. Note that $r_{j}(\mu_{s_{j}})=\mu_{e}-h_{j}$. The $j$-th \textit{AM operator} $AM_{j}:\mathcal{MV}\rightarrow\mathcal{MV}$ is defined as following:
$P'=AM_{j}(P)=\conv(\mu_{\bullet}')$ is the smallest pseudo-Weyl polytope such that

(i). $\mu'_{w}=\mu_{w}$ for all $w\in W_{j}^{-}$,

(ii). $\mu'_{e}=\mu_{e}-h_{j}$,

(iii). $P'$ contains $\mu_{w}$ for all $w\in W_{j}^{+}$,

(iv). if $w\in W_{j}^{-}$ is such that $\langle\mu_{w},\alpha_{j}\rangle\geq c_{j}(P)$, then $P'$ contains $r_{j}(\mu_{w})$.

\begin{conj}[Anderson-Mirkovi\'{c}]\label{conj AM}
For any MV polytope $P$ and any $j\in I$, $AM_{j}(P)$ is an MV polytope. Moreover $AM_{j}(P)=\tilde{f}_{j}(P)$.
\end{conj}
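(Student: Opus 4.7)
The statement has two parts: first, that $AM_j(P)$ is an MV polytope; second, that it coincides with $\tildef_j(P)$. Since $AM_j(P)$ is defined as the \emph{smallest} pseudo-Weyl polytope satisfying conditions (i)--(iv), a natural first move is to try to show that $\tildef_j(P)$ itself satisfies these four conditions, which would immediately give the inclusion $AM_j(P)\subseteq \tildef_j(P)$. The reverse inclusion would then be needed for equality, and this is where one should expect trouble, in view of the type $C_2$ counterexample mentioned in the introduction.

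My plan for the inclusion $AM_j(P)\subseteq \tildef_j(P)$ is to check (i)--(iv) directly from Theorem \ref{thm Kamnitzer}. Conditions (i) and (ii) should come out almost for free: Theorem \ref{thm Kamnitzer} gives $M'_\gamma=M_\gamma$ whenever $\gamma=w\varpi_i$ with $s_j w<w$, which reads off as $\mu'_w=\mu_w$ for $w\in W_j^-$ (condition (i)), while $M'_{\varpi_j}=M_{\varpi_j}-1$ together with the other fixed coordinates of $\mu'_e$ force $\mu'_e=\mu_e-h_j$ (condition (ii)). Conditions (iii) and (iv) are the real content: for each $w\in W_j^+$ one must locate $\mu_w$ inside $\tildef_j(P)$, and for each $w\in W_j^-$ with $\langle\mu_w,\alpha_j\rangle\geq c_j(P)$ one must locate the reflected vertex $r_j(\mu_w)$ inside it. The plan is to translate these containment statements into systems of inequalities on the $M_\gamma'$ and attack them using the tropical Pl\"ucker relations at the $(w,i,j)$ appearing on the path from $e$ to $w_0$.

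The main technical input will be the Baumann--Kamnitzer dictionary between MV polytopes and representations of the preprojective algebra in the simply-laced case. Under this dictionary the chamber-weight coordinates $M_{w\varpi_i}$ are read off as dimensions of canonical subrepresentations, and the Kashiwara operator $\tildef_j$ admits an explicit module-theoretic description. I would use this to verify conditions (iii) and (iv) by constructing, for each relevant $w$, a submodule of the representation corresponding to $\tildef_j(P)$ whose associated vertex dominates the point one wants to contain. For the non-simply-laced case the plan is to fold: a diagram automorphism $\sigma$ on a simply-laced $\mathfrak{g}$ acts on $\mathcal{MV}$, and $\hatMV$ for $\mathfrak{g}^\sigma$ is identified with the $\sigma$-fixed polytopes. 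Since both $AM_j$ and $\tildef_j$ respect this folding (the former by direct inspection of the four conditions, the latter by the standard compatibility of crystals with automorphisms), the simply-laced inclusion will descend to the non-simply-laced case.

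The hardest step, and the place where the argument cannot be pushed through in full generality, is the second half of the conjecture --- the reverse inclusion $\tildef_j(P)\subseteq AM_j(P)$. The $C_2$ counterexample shows $\tildef_j(P)$ can be strictly larger, so any strategy aiming for full equality must exploit type-specific features. I would therefore only attempt to establish equality in restricted settings, for instance when the polytope is sufficiently generic on the $j$-th stratum or when $\varpi_j$ is minuscule, where the four AM conditions should be tight enough to force the Lusztig datum. The general simply-laced case, and even deciding whether counterexamples exist in types $D$ or $E$, would remain open under this plan.
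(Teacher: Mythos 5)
Your reading of the situation is correct: the conjecture as stated is false (Kamnitzer's $C_2$ counterexample), so the honest target is the one-sided inclusion $AM_j(P)\subseteq\tildef_j(P)$, which is exactly what the paper establishes (Theorem \ref{thm main simply-laced case}), together with the special-case equalities of Theorems \ref{thm minuscule} and \ref{thm simply-laced case}. Your broad strategy --- verify that $\tildef_j(P)$ satisfies (i)--(iv), use Baumann--Kamnitzer in the simply-laced case, fold via diagram automorphisms for the rest --- matches the shape of the paper's argument.

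There is, however, a real gap in the middle. You propose to verify conditions (iii) and (iv) directly as containment statements, either by unwinding tropical Pl\"ucker relations along a path from $e$ to $w_0$ or by constructing submodules of the module corresponding to $\tildef_j(P)$. The paper inserts a crucial reformulation first (Lemma \ref{lem AM op in M}): $AM_j(P)$ is the smallest pseudo-Weyl polytope whose BZ datum $M'_\bullet$ satisfies $M'_\gamma=M_\gamma$ on $\Gamma^j$, $M'_{\varpi_j}=M_{\varpi_j}-1$, and the single family of inequalities
\[
M'_\gamma \le \min\{M_\gamma,\; M_{s_j\gamma}+c_j(P)\langle h_j,\gamma\rangle\}\qquad\text{for all }\gamma\in\Gamma_j.
\]
This converts the geometric containments (iii)--(iv) into arithmetic inequalities on BZ coordinates, and it is this condition, not (iii) and (iv) directly, that the paper checks for $\tildef_j(P)$. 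Without that translation your plan has no concrete inequality to aim at, and the tropical Pl\"ucker route is precisely the non-explicit recursion the AM formulation was designed to sidestep. With the reformulation in hand, the simply-laced proof is a short homological computation rather than a submodule construction: for a generic module $T'$ in $Z'=\tildef_j(Z)$, apply $\Hom_\Lambda(-,T')$ to $0\to N(s_j\gamma)\to N(\gamma)\to S_j^{\oplus\langle h_j,\gamma\rangle}\to 0$, invoke Crawley-Boevey's formula (\ref{equ Crawley-Boevey formula}), and use the identity $c_j(P)=\varphi_j(Z)-1$ (Lemma \ref{lem express c_j}) to produce exactly the correction term $c_j(P)\langle h_j,\gamma\rangle$. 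That identity is absent from your plan, and it is not clear how that coefficient would emerge from tropical Pl\"ucker relations or from a generic submodule argument. Your folding step for the non-simply-laced case is sound and matches the paper.
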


If the conjecture holds, we have a more explicit description of the action of Kashiwara operators in the sense that the AM operator only requires the smallest pseudo-Weyl polytope without involving tropical Pl\"{u}cker relations.

Kamnitzer has proved the following result:
\begin{prop}[\cite{Kam1}, Proposition 5.3]\label{prop Kamnitzer}
Let $P=P(M_{\bullet})$ be an MV polytope and let $M''_{\bullet}$ be defined as
\begin{equation}\label{equ M'}
M''_{\gamma}=\begin{cases}
            & M_{\gamma},\ \ \ \text{if } \gamma\in\Gamma^{j}\\
            & \min\{M_{\gamma}, M_{s_{j}\gamma}+c_{j}(P)\langle h_{j},\gamma\rangle\}, \ \ \ \text{if } \gamma\in\Gamma_{j}.
            \end{cases}
\end{equation}
If $M''_{\bullet}$ satisfy the edge inequalities, then $P(M''_{\bullet})=AM_{j}(P)$.
\end{prop}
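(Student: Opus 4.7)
The plan is to verify two containments: that $P(M''_\bullet)$ meets the four defining requirements (i)--(iv) of $AM_j(P)$, and that it is minimal among pseudo-Weyl polytopes doing so. The standing hypothesis that $M''_\bullet$ satisfies the edge inequalities gives $P(M''_\bullet)$ a well-defined GGMS datum $\mu''_\bullet$, which I will use throughout.

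For the first containment, conditions (i) and (ii) are direct computations. If $w\in W_j^-$ and $i\in I$, then $w\varpi_i\in\Gamma^j$, so $M''_{w\varpi_i}=M_{w\varpi_i}$, forcing $\mu''_w=\mu_w$; while for $w=e$, the identity $M_{s_j\varpi_j}+c_j(P)=M_{\varpi_j}-1$ (by the very definition of $c_j(P)$) together with $s_j\varpi_i=\varpi_i$ for $i\neq j$ yields $\mu''_e=\mu_e-h_j$. Condition (iii) is automatic from $M''_\gamma\leq M_\gamma$, which gives $P(M_\bullet)\subseteq P(M''_\bullet)$ and hence retains every $\mu_w$. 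For (iv), I verify $\langle r_j(\mu_w),\gamma\rangle\geq M''_\gamma$ case by case. If $\gamma\in\Gamma_j$ then $s_j\gamma\in\Gamma^j$ and $\langle\mu_w,s_j\gamma\rangle\geq M_{s_j\gamma}$ (because $\mu_w\in P(M_\bullet)$), so $\langle r_j(\mu_w),\gamma\rangle\geq M_{s_j\gamma}+c_j(P)\langle h_j,\gamma\rangle\geq M''_\gamma$. If $\gamma\in\Gamma^j$, expanding $s_j\gamma=\gamma-\langle h_j,\gamma\rangle\alpha_j$ gives $\langle r_j(\mu_w),\gamma\rangle=\langle\mu_w,\gamma\rangle-\langle h_j,\gamma\rangle(\langle\mu_w,\alpha_j\rangle-c_j(P))$, and the key observation is the sign rule $\langle h_j,w'\varpi_i\rangle\leq 0$ whenever $s_jw'<w'$ (because then $w'^{-1}\alpha_j$ is a negative root); combined with the hypothesis $\langle\mu_w,\alpha_j\rangle\geq c_j(P)$, this makes the correction nonpositive and hence $\langle r_j(\mu_w),\gamma\rangle\geq\langle\mu_w,\gamma\rangle\geq M_\gamma$.

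For minimality, let $P'=P(N_\bullet)$ be any pseudo-Weyl polytope satisfying (i)--(iv); the aim is $N_\gamma\leq M''_\gamma$ for every $\gamma$. If $\gamma=w\varpi_i\in\Gamma^j$, condition (i) gives $N_\gamma=\langle\mu_w,\gamma\rangle=M_\gamma=M''_\gamma$. If $\gamma\in\Gamma_j$, condition (iii) applied to any $w\in W_j^+$ with $\gamma=w\varpi_i$ yields $N_\gamma\leq\langle\mu_w,\gamma\rangle=M_\gamma$, so it remains to show $N_\gamma\leq M_{s_j\gamma}+c_j(P)\langle h_j,\gamma\rangle$. Choose $w\in W_j^-$ with $s_j\gamma=w\varpi_i$ and split: if $\langle\mu_w,\alpha_j\rangle\geq c_j(P)$, then (iv) applies and $\langle r_j(\mu_w),\gamma\rangle\geq N_\gamma$ unpacks exactly to this inequality; if instead $\langle\mu_w,\alpha_j\rangle<c_j(P)$, then using $\langle h_j,\gamma\rangle\geq 0$ (the $\Gamma_j$ version of the sign rule above) we get $M_{s_j\gamma}+c_j(P)\langle h_j,\gamma\rangle=\langle\mu_w,\gamma\rangle+\langle h_j,\gamma\rangle(c_j(P)-\langle\mu_w,\alpha_j\rangle)\geq\langle\mu_w,\gamma\rangle\geq M_\gamma$, so the required bound follows from what was already established.

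The main obstacle I anticipate is precisely this last case analysis: recognizing that the threshold $\langle\mu_w,\alpha_j\rangle\geq c_j(P)$ appearing in (iv) is exactly the borderline determining whether the reflected half-space or the original half-space is the binding constraint, so that outside its range the reflected constraint is automatically dominated and the definition of $M''_\gamma$ as a minimum is consistent. The remaining work is careful bookkeeping with $s_j$ acting on chamber weights and with the sign of $\langle h_j,w'\varpi_i\rangle$ according to whether $s_jw'$ is shorter or longer than $w'$.
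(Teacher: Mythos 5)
Your argument is correct and follows the same route as the paper's proof of the closely related Lemma~\ref{lem AM op in M}, which the authors themselves describe as "a generalization of \cite{Kam1} Proposition 5.3" with a similar proof: verify (i)--(iv) for the candidate polytope using the sign of $\langle h_j,\gamma\rangle$ and the identity $r_j(\mu_w)=\mu_w-(\langle\mu_w,\alpha_j\rangle-c_j)h_j$, then establish minimality by the two-case split on whether $\langle\mu_w,\alpha_j\rangle\geq c_j(P)$. One small improvement on your side: in checking (iv) for $\gamma\in\Gamma_j$ you go directly from $\langle\mu_w,s_j\gamma\rangle\geq M_{s_j\gamma}$, avoiding the detour through $\mu_v$ that the paper takes.
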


\begin{rem}\label{rem formula implies AM conj}
Let $\tilde{f}_{j}(P)=P(M'_{\bullet})$. Suppose that $M'_{\bullet}$ is defined as (\ref{equ M'}). Then $M'_{\bullet}$ automatically satisfies the edge inequalities since $P(M'_{\bullet})$ is already an MV poplytope. Thus by the above proposition, Conjecture \ref{conj AM} holds.
\end{rem}

In the case of type $A$, Kamnitzer proved Conjecture \ref{conj AM} by showing that (\ref{equ M'}) indeed holds for $\tilde{f}_{j}(P)=P(M'_{\bullet})$ \cite{Kam1}. He also gave a counterexample of the conjecture for type $C_{3}$. Naito and Sagaki \cite{NS} proved modified versions of Conjecture \ref{conj AM} for type $B$ and $C$. So far we have not seen any results concerning type $D$, $E$ or other non-simply-laced types.

\subsection{Main results}\label{subsec main results}
In this subsection we state our main results. The proofs will be given in the next two sections.

Our first result answers an open question raised by Kamnitzer in 2007 (Question \ref{ques Kamnitzer}).

\begin{thm}\label{thm main simply-laced case}
For any MV polytope $P$ and any $j\in I$, we have $AM_{j}(P)\subseteq\tildef_{j}(P)$.
\end{thm}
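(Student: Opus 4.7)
The plan is to verify that $\tildef_j(P)$ itself satisfies the four defining conditions (i)--(iv) of $AM_j(P)$ given in Section \ref{subsec the AM conjecture}; since $AM_j(P)$ is by definition the \emph{smallest} pseudo-Weyl polytope meeting those conditions, the containment $AM_j(P)\subseteq\tildef_j(P)$ follows immediately. Write $\tildef_j(P)=P(M'_\bullet)$ with GGMS datum $\mu^{\tildef}_\bullet$. Conditions (i) and (ii) come essentially for free: (i) is exactly the assertion of Theorem \ref{thm Kamnitzer} for chamber weights $w\varpi_i$ with $w\in W_j^-$, forcing $\mu^{\tildef}_w=\mu_w$; and (ii) follows from the fact that the Kashiwara operator on $\mathbf{B}^{\vee}$ lowers weight by $h_j$, so under the normalization $\mu_{w_0}=0$ we get $\mu^{\tildef}_e=\mu_e-h_j$. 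Together with Theorem \ref{thm Kamnitzer} this forces $M'_{\varpi_i}=M_{\varpi_i}$ for $i\neq j$ and $M'_{\varpi_j}=M_{\varpi_j}-1$, consistent with (ii).

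The substantive content lies in (iii) and (iv): one must show that $\mu_w\in P(M'_\bullet)$ for every $w\in W_j^+$, and that $r_j(\mu_w)\in P(M'_\bullet)$ for every $w\in W_j^-$ with $\langle\mu_w,\alpha_j\rangle\geq c_j(P)$. Each such inclusion amounts to the inequalities $\langle\,\cdot\,,\gamma\rangle\geq M'_\gamma$ for all $\gamma\in\Gamma$. On the subset $\gamma\in\Gamma^j$ these are immediate from $M'_\gamma=M_\gamma$ together with $\mu_w\in P$ (for (iii)) or a short direct computation exploiting $s_j\gamma=\gamma$-type symmetry and the definition of $c_j(P)$ (for (iv)). The real work is to establish the corresponding inequalities for $\gamma\in\Gamma_j$.

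For this I split into the simply-laced and non-simply-laced settings, mirroring Sections \ref{sec proof simply-laced} and \ref{sec proof non-simply-laced}. In the simply-laced case I will use the Baumann--Kamnitzer correspondence recalled in Section \ref{sec MV polytopes and preproj algs}: write $T$ for the nilpotent preprojective module matching $P$ and $T'$ for the one matching $\tildef_j(P)$. Under this dictionary each $M_\gamma$ is encoded as a $\Hom$-dimension (or an HN-type subquotient dimension) attached to a distinguished submodule of $T$ indexed by $\gamma$, and the passage $T\to T'$ induced by $\tildef_j$ is a controlled homological modification along the simple $S_j$ (in the Kashiwara--Saito style). Condition (iii) will then follow by comparing the $w$-labelled subobjects of $T$ and $T'$. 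For condition (iv) I intend to identify $r_j(\mu_w)$ with the coordinate vector of the submodule obtained by applying a reflection-type functor at vertex $j$ to the $w$-indexed piece of $T$; the height condition $\langle\mu_w,\alpha_j\rangle\geq c_j(P)$ is precisely the cutoff ensuring that this reflected object still sits inside $T'$, from which the required inequalities drop out as dimension inequalities.

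For the non-simply-laced case, the diagram-folding setup of \cite{NS} and \cite{H} presents $\mathfrak{g}$ as the fixed subalgebra of a simply-laced $\hatfrakg$ under an admissible automorphism $\sigma$, and provides a bijection $\hatMV^{\sigma}\longleftrightarrow\mathcal{MV}$. Both $\tildef_j$ and $AM_j$ intertwine with their $\hatfrakg$-counterparts through this bijection (each construction preserves $\sigma$-invariance), so the inclusion established in the simply-laced case transports to $\mathfrak{g}$. The main obstacle is the simply-laced verification of (iv): because $r_j$ is not an intrinsic symmetry of $P$ but a reflection twisted by the global datum $c_j(P)$, its realization on the preprojective side is not automatic, and matching the cutoff $\langle\mu_w,\alpha_j\rangle\geq c_j(P)$ against the homological step $T\to T'$ is where careful bookkeeping in the Baumann--Kamnitzer framework will be needed.
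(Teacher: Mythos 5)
Your high-level plan — check that $\tildef_j(P)$ satisfies (i)--(iv), split into simply-laced vs.\ folded non-simply-laced, and use the Baumann--Kamnitzer dictionary for the former — matches the paper's strategy. Conditions (i) and (ii) are indeed immediate from Theorem~\ref{thm Kamnitzer} as you say. However, there is a genuine gap in how you propose to handle (iii) and (iv), precisely where the substance is.

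The paper does \emph{not} attack (iii) and (iv) directly as polytope-membership statements. It first proves Lemma~\ref{lem AM op in M}, a purely combinatorial reformulation that replaces (i)--(iv) by conditions (a)--(c) on the BZ datum, the nontrivial one being $M'_\gamma\leq\min\{M_\gamma,\,M_{s_j\gamma}+c_j(P)\langle h_j,\gamma\rangle\}$ for $\gamma\in\Gamma_j$. This reduction is the crucial step you are missing: it converts the geometric statement ``$r_j(\mu_w)\in\tildef_j(P)$ whenever $\langle\mu_w,\alpha_j\rangle\geq c_j(P)$'' into a single uniform inequality of BZ data, which is exactly what preprojective-algebra methods can prove. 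With it in hand, the simply-laced verification is a short homological chase: with $0\to T\to T'\to S_j\to 0$ a generic extension, one applies $\Hom_\Lambda(-,T')$ to the canonical short exact sequence $0\to N(s_j\gamma)\to N(\gamma)\to S_j^{\oplus\langle h_j,\gamma\rangle}\to 0$ (Lemma~\ref{lem property of N_gamma}) and uses the Crawley-Boevey formula and the identity $c_j(P)=\varphi_j(Z)-1$ (Lemma~\ref{lem express c_j}). Nothing in this argument involves reflection functors acting on $T$ or on pieces of $T$.

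Your proposed mechanism for (iv) — ``identify $r_j(\mu_w)$ with the coordinate vector of the submodule obtained by applying a reflection-type functor at vertex $j$ to the $w$-indexed piece of $T$'' — is not a step that the BK framework supplies, and it rests on a misreading of the dictionary: $M_\gamma=-\dim\Hom_\Lambda(N(\gamma),T)$ where $N(\gamma)$ is a fixed module independent of $T$, not a ``distinguished submodule of $T$ indexed by $\gamma$.'' The map $r_j$ is an affine reflection twisted by the \emph{global} quantity $c_j(P)$, and there is no submodule of $T$ whose dimension vector is $r_j(\mu_w)$ in general. Without the reformulation of Lemma~\ref{lem AM op in M}, it is not clear how to turn the cutoff $\langle\mu_w,\alpha_j\rangle\geq c_j(P)$ and the containment $r_j(\mu_w)\in\tildef_j(P)$ into module-theoretic statements at all; this is exactly the ``careful bookkeeping'' you flag as the obstacle, and it is not resolved by the proposal. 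So the plan, as written, does not yet constitute a proof of the simply-laced case, and consequently the folding argument for the non-simply-laced case (which otherwise tracks the paper) has nothing to transport.
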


That is, the MV polytope $\tildef_{j}(P)$ satisfies the condition (i)-(iv) in the definition of AM operators. For type $D$ and $E$, it remains open whether it is the smallest pseudo-Weyl polytope satisfying those conditions.

The next result shows that for particular $\gamma\in\Gamma_{j}$, the BZ datum of $\tildef_{j}(P)$ has the form of (\ref{equ M'}).

\begin{thm}\label{thm minuscule}
Let $P=P(M_{\bullet})$ be an MV polytope and $j\in I$. Let $\tildef_{j}(P)=P(M'_{\bullet})$. If $\langle h_{j},\gamma\rangle=1$, then $M'_{\gamma}=\min\{M_{\gamma}, M_{s_{j}\gamma}+c_{j}(P)\}$.
\end{thm}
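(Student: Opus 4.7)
The plan is to sandwich $M'_\gamma$ between two bounds that both collapse to $\min\{M_\gamma, M_{s_j\gamma}+c_j(P)\}$. Write $\gamma = w_\gamma \varpi_i$ with $w_\gamma \in W_j^+$; then the hypothesis $\langle h_j,\gamma\rangle = 1$ forces $s_j\gamma = \gamma - \alpha_j = (s_j w_\gamma)\varpi_i$, and since $s_j w_\gamma \in W_j^-$ we have $s_j\gamma \in \Gamma^j$. Consequently Theorem \ref{thm Kamnitzer} already gives $M'_{s_j\gamma} = M_{s_j\gamma}$, an identity that serves as the anchor for both directions.

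For the upper bound I would invoke Theorem \ref{thm main simply-laced case}: the inclusion $AM_j(P) \subseteq \tildef_j(P)$ means that the pairing of $\gamma$ with any point of $AM_j(P)$ is an upper bound for $M'_\gamma$. Condition (iii) in the definition of the AM operator puts the vertex $\mu_{w_\gamma}$ of $P$ into $AM_j(P)$, so $M'_\gamma \leq \langle \mu_{w_\gamma},\gamma\rangle = M_\gamma$. Under the hypothesis $\langle \mu_{s_j w_\gamma},\alpha_j\rangle \geq c_j(P)$, condition (iv) puts $r_j(\mu_{s_j w_\gamma})$ into $AM_j(P)$; using $r_j(\mu) = s_j\mu + c_j(P) h_j$ together with $\langle h_j,\gamma\rangle = 1$ a direct computation gives $\langle r_j(\mu_{s_j w_\gamma}),\gamma\rangle = M_{s_j\gamma} + c_j(P)$, which is the second bound. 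When instead $\langle \mu_{s_j w_\gamma},\alpha_j\rangle < c_j(P)$, one argues by examining the edge of $P$ joining $\mu_{s_j w_\gamma}$ and $\mu_{w_\gamma}$ that $M_{s_j\gamma} + c_j(P) \geq M_\gamma$ automatically, so the weaker bound is free.

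For the lower bound I would exploit that $M'_\bullet$ is itself a BZ datum and therefore satisfies the tropical Pl\"ucker relations \eqref{equ tropical Plucker relation}. Select a triple $(u,k,l)$ with $us_k\varpi_k = \gamma$ and $c_{kl}\in\{0,-1\}$ such that every chamber weight occurring in the Pl\"ucker relation at $(u,k,l)$ other than $\gamma$ itself lies in $\Gamma^j \cup \{\varpi_j\}$. The minuscule assumption $\langle h_j,\gamma\rangle = 1$ is precisely what guarantees the existence of such a triple: it places the ``$j$-partner'' $s_j\gamma$ of $\gamma$ in $\Gamma^j$, and this is the configuration that keeps the neighbouring chamber weights in the relation on the correct side of the $j$-wall. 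Substituting $M'_{\varpi_j} = M_{\varpi_j}-1$ and $M'_{\gamma'}=M_{\gamma'}$ for $\gamma'\in\Gamma^j$, and comparing the resulting relation term-by-term with the analogous relation for $M_\bullet$, yields $M'_\gamma \geq \min\{M_\gamma, M_{s_j\gamma}+c_j(P)\}$ after a short case analysis over which branch of the $\min$ is realized.

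The main obstacle is the lower bound: one must specify the tropical Pl\"ucker triple uniformly in $\gamma$ and then match the two branches of its $\min$ against the two branches of $\min\{M_\gamma, M_{s_j\gamma}+c_j(P)\}$. The auxiliary subcase $\langle \mu_{s_j w_\gamma},\alpha_j\rangle < c_j(P)$ in the upper bound also needs a brief geometric check. If the combinatorial execution becomes unwieldy in higher rank, the Baumann--Kamnitzer framework of Section \ref{sec MV polytopes and preproj algs} offers a cleaner route: express $M_\gamma$ and $M'_\gamma$ via $\Hom$-dimensions on preprojective-algebra modules and use that $\langle h_j,\gamma\rangle = 1$ forces the relevant module to be one-dimensional at vertex $j$, pinning down the formula directly.
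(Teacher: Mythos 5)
Your upper bound is essentially a repackaging of Theorem~\ref{thm main simply-laced case}: once we know $AM_{j}(P)\subseteq\tildef_{j}(P)$, and once we know that the BZ datum $\tilde M_{\bullet}$ of $AM_{j}(P)$ satisfies condition~(c) of Lemma~\ref{lem AM op in M}, the containment already gives $M'_{\gamma}\le\tilde M_{\gamma}\le\min\{M_{\gamma},M_{s_{j}\gamma}+c_{j}(P)\}$; your case split on $\langle\mu_{s_{j}w_{\gamma}},\alpha_{j}\rangle$ versus $c_{j}(P)$ is unnecessary (it reproduces the proof of Lemma~\ref{lem AM op in M} rather than invoking its conclusion). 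So the upper half is fine, if circuitous.

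The lower bound is where your proposal has a genuine gap. You assert the existence of a tropical Pl\"ucker triple $(u,k,l)$ with $us_{k}\varpi_{k}=\gamma$ such that the other five chamber weights appearing in relation~\eqref{equ tropical Plucker relation}, namely $us_{l}\varpi_{l}$, $u\varpi_{k}$, $us_{k}s_{l}\varpi_{l}$, $us_{l}s_{k}\varpi_{k}$, $u\varpi_{l}$, all lie in $\Gamma^{j}\cup\{\varpi_{j}\}$, and you attribute this to the hypothesis $\langle h_{j},\gamma\rangle=1$. This is unsupported: for $\gamma$ deep inside $\Gamma_{j}$ (i.e.\ $w_{\gamma}$ of large length), the relations with $us_{k}\varpi_{k}=\gamma$ generically involve several other chamber weights that themselves lie in $\Gamma_{j}$, whose values $M'$ are not yet pinned down by conditions~(a)--(b). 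Indeed Kamnitzer's own type~$A$ proof via tropical Pl\"ucker relations is an induction along a reduced word of $w_{0}$ and determines all those intermediate values recursively; a single relation with all spectators in $\Gamma^{j}\cup\{\varpi_{j}\}$ does not exist in general. Even granting such a relation, the promised ``short case analysis'' comparing the two $\min$'s is where the real work would lie, and you have not carried it out. Finally, you do not address the non-simply-laced case at all: there the tropical Pl\"ucker relations take a different form (which the paper does not even record), so the argument would have to be rebuilt or, as the paper does, deduced from the simply-laced case via diagram automorphisms.

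The paper's route is structurally different. It proves Theorem~\ref{thm minuscule} in the simply-laced case using the Baumann--Kamnitzer identification $M_{\gamma}=-\dim\Hom_{\Lambda}(N(\gamma),T)$ for a general point $T$ of the corresponding irreducible component. The inequality $M'_{\gamma}\le\min\{\cdots\}$ falls out of Lemmas~\ref{lem geq 1} and~\ref{lem geq 2}. The nontrivial step, which replaces your lower bound, is the dichotomy argument in Section~\ref{subsec pf of thm 2 sc}: applying $\Hom_{\Lambda}(-,T)$ and $\Hom_{\Lambda}(-,T')$ to the short exact sequence $0\to N(s_{j}\gamma)\to N(\gamma)\to S_{j}\to 0$, one shows that either the connecting map is surjective (giving $D_{\gamma}(Z')=D_{s_{j}\gamma}(Z)-c_{j}(P)$ by Corollary~\ref{cor equ holds}), or it is not, in which case a lifting argument combined with upper semi-continuity of $\dim\Hom_{\Lambda}(N(\gamma),-)$ over the fibre of extensions of $T$ by $S_{j}$ forces $D_{\gamma}(Z')=D_{\gamma}(Z)$. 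The hypothesis $\langle h_{j},\gamma\rangle=1$ enters precisely to make the cokernel of $N(s_{j}\gamma)\hookrightarrow N(\gamma)$ a single copy of $S_{j}$, so that the diagram chase closes. Your closing sentence gestures at this but misstates what $\langle h_{j},\gamma\rangle=1$ buys you: it is the $j$-top of $N(\gamma)$ that is one-dimensional (Lemma~\ref{lem property of N_gamma}), not the module at vertex $j$, and the real content is the semi-continuity/generic-point argument rather than any dimension count.
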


\begin{rem}\label{rem thm2 implies AM type A}
Note that in the case of type $A$, all $\gamma\in\Gamma_{j}$ satisfies the condition $\langle h_{j},\gamma\rangle=1$. Hence by proving this theorem we obtain a new proof of Conjecture \ref{conj AM} for type $A$.
\end{rem}

Let $P=P(M_{\bullet})$ be an MV polytope and $\tildef_{j}(P)=P(M'_{\bullet})$. In any simply-laced case other than type $A$,
$M'_{\gamma}$ might not satisfy (\ref{equ M'}), for $\gamma\in\Gamma_{j}$ such that $\langle h_{j},\gamma\rangle>1$. We will give an example for type $D_{4}$ in Section \ref{subsec eg of type D}, where $M'_{\bullet}$ is indeed not given by (\ref{equ M'}) but the AM conjecture still holds.

\begin{thm}\label{thm simply-laced case}
Let $P=P(M_{\bullet})$ be an MV polytope and $j\in I$. Set
$$m=m(j,P)=M_{\varpi_{j}}-M_{-s_{j}\varpi_{j}}-M_{s_{j}\varpi_{j}}.$$
Then $m\geq 0$ and for any positive integer $k>m$, $AM_{j}(\tildef_{j}^{k}(P))=\tildef_{j}^{k+1}(P)$.

Moreover, let $P^{(k)}=P(M^{(k)}_{\bullet})=\tildef_{j}^{k}(P)$ and $P^{(k+1)}=P(M^{k+1}_{\bullet})=\tildef_{j}^{k+1}(P)$, we have
\begin{equation}\label{equ more explicit M'}
M^{(k+1)}_{\gamma}=\begin{cases}
            & M^{(k)}_{\gamma},\ \ \ \text{if } \gamma\in\Gamma^{j}\\
            & M^{(k)}_{s_{j}\gamma}+c_{j}(P^{(k)})\langle h_{j},\gamma\rangle, \ \ \ \text{if } \gamma\in\Gamma_{j}.
            \end{cases}
\end{equation}
\end{thm}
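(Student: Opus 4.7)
The plan is to deduce the theorem from Proposition \ref{prop Kamnitzer} by showing that for $k>m$ the minimum appearing in Kamnitzer's formula (\ref{equ M'}) is always attained by its second branch. Once this is in place, the BZ datum prescribed by (\ref{equ more explicit M'}) automatically satisfies the edge inequalities (being the BZ datum of the genuine MV polytope $\tildef_{j}(P^{(k)})$), so Proposition \ref{prop Kamnitzer} produces both the AM equality $AM_{j}(P^{(k)}) = P^{(k+1)}$ and the explicit formula (\ref{equ more explicit M'}) in one stroke. The bound $k>m$ is precisely the point at which this regularisation holds uniformly in $\gamma$.

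For the inequality $m\geq 0$ I first check that $-s_{j}\varpi_{j}$ is a chamber weight in $\Gamma_{j}$. Indeed $\langle h_{j}, -s_{j}\varpi_{j}\rangle = -\langle s_{j}(h_{j}),\varpi_{j}\rangle = 1 > 0$, whereas any representation $\gamma = w\varpi_{i}$ with $w\in W_{j}^{-}$ would force $w^{-1}(h_{j})<0$ and hence $\langle h_{j},\gamma\rangle\leq 0$. The inequality $m\geq 0$ then follows from a tropical Pl\"ucker relation on a rank-two slice containing $j$: modelled on type $A_{2}$, where $-s_{1}\varpi_{1} = s_{2}\varpi_{2}$ and the relation at $(e,1,2)$ forces $M_{s_{1}\varpi_{1}} + M_{s_{2}\varpi_{2}} = \min\{M_{\varpi_{1}},M_{\varpi_{2}}\}$ and hence $m = \max\{0,\, M_{\varpi_{1}} - M_{\varpi_{2}}\}$. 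The analogous argument on a rank-two slice of $P$ delivers $M_{\varpi_{j}}\geq M_{-s_{j}\varpi_{j}} + M_{s_{j}\varpi_{j}}$ in general.

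For formula (\ref{equ more explicit M'}) I induct on $k\geq m+1$. Since $s_{j}\in W_{j}^{-}$, Theorem \ref{thm Kamnitzer} gives $M^{(k)}_{s_{j}\varpi_{j}} = M_{s_{j}\varpi_{j}}$ for all $k$, hence $c_{j}(P^{(k)}) = c_{j}(P)-k$. The inductive step then reduces to: if the formula holds at level $k$, then $M^{(k+1)}_{\gamma} = M^{(k)}_{\gamma} - \langle h_{j},\gamma\rangle$ for every $\gamma\in\Gamma_{j}$. When $\langle h_{j},\gamma\rangle = 1$ this is immediate from Theorem \ref{thm minuscule}: the inductive hypothesis supplies $M^{(k)}_{\gamma} - M_{s_{j}\gamma} = c_{j}(P^{(k-1)}) > c_{j}(P^{(k)})$, so the min there is realised by its second branch. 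For larger values of $\langle h_{j},\gamma\rangle$ a secondary induction via tropical Pl\"ucker relations at triples $(w,i,j)$ reduces matters to the minuscule case.

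The main obstacle is the base case $k = m+1$, which cannot be handled by Theorem \ref{thm minuscule} alone: one must verify directly that $P^{(m+1)}$ already sits in the ``regular regime'' $M^{(m+1)}_{\gamma}\geq M_{s_{j}\gamma} + c_{j}(P^{(m+1)})\langle h_{j},\gamma\rangle$ for every $\gamma\in\Gamma_{j}$. The threshold $k>m$ appears because at $\gamma=-s_{j}\varpi_{j}$ (with $s_{j}\gamma = -\varpi_{j}$ and $M_{-\varpi_{j}} = 0$ in the stable normalisation) iterating Theorem \ref{thm minuscule} gives $M^{(k)}_{-s_{j}\varpi_{j}} = M_{-s_{j}\varpi_{j}}$ exactly while $k\leq m$, and the second branch only takes over at $k = m+1$. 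Propagating this threshold to arbitrary $\gamma\in\Gamma_{j}$ is the delicate technical core of the proof, and we expect to streamline the book-keeping substantially by translating into the preprojective algebra framework of Baumann--Kamnitzer recalled in Section \ref{sec MV polytopes and preproj algs}.
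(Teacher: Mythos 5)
Your high-level plan (show that for $k>m$ the minimum in (\ref{equ M'}) is always achieved by the second branch, then feed this into Proposition \ref{prop Kamnitzer} and Remark \ref{rem formula implies AM conj}) matches the paper's wrap-up, but the two core steps are not carried out, and the paper's method for them is genuinely different and much shorter.

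For $m\geq 0$, your tropical Pl\"ucker argument only works as written in rank two: in higher rank $-s_{j}\varpi_{j}=s_{j}w_{0}\varpi_{j^{*}}$ is not one tropical Pl\"ucker step away from $\varpi_{j}$, so ``the analogous argument on a rank-two slice'' is a gap, not a proof. The paper instead proves a small lemma in Section \ref{subsec pf of thm 3 sc} identifying
$$m(j,P)=D_{-s_{j}\varpi_{j}}(Z)-D_{\varpi_{j}}(Z)+D_{s_{j}\varpi_{j}}(Z)=\dim\Ext^{1}_{\Lambda}(T,S_{j}),$$
where $T$ is a general point of $Z=\Pol^{-1}(P)$; nonnegativity is then automatic, with no case analysis and no dependence on rank. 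This is the observation your proposal is missing.

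For formula (\ref{equ more explicit M'}), your induction on $k$ is blocked at exactly the two places you yourself flag: the base case $k=m+1$ (which you say ``cannot be handled by Theorem \ref{thm minuscule} alone'' and call ``the delicate technical core'') and the non-minuscule $\gamma$ (for which you invoke an unspecified ``secondary induction via tropical Pl\"ucker relations''). Both are real obstructions: the paper's own $D_{4}$ example shows formula (\ref{equ M'}) fails for a $\gamma$ with $\langle h_{j},\gamma\rangle=2$, so the reduction to the minuscule case is not a routine tropical Pl\"ucker bookkeeping exercise. The paper bypasses both problems at once: since $m=\dim\Ext^{1}_{\Lambda}(T,S_{j})$ and $k>m$, one gets $\Ext^{1}_{\Lambda}(T^{(n+1)},S_{j})=0$, so the last map in the long exact sequence (\ref{equ 3}) maps into a zero space and is trivially surjective; Corollary \ref{cor equ holds} then gives $D_{\gamma}(Z^{(n+1)})=D_{s_{j}\gamma}(Z^{(n)})-c_{j}(P^{(n)})\langle h_{j},\gamma\rangle$ simultaneously for every $\gamma\in\Gamma_{j}$, minuscule or not, with no induction on $\langle h_{j},\gamma\rangle$ and no separate base case. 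Your closing remark that you ``expect to streamline the book-keeping substantially by translating into the preprojective algebra framework'' is in fact the whole proof in the paper; without writing that translation out, what you have is an outline with the two decisive steps missing.
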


Recall that for any MV polytope $P$, the set $\{\tildef_{j}^{k}(P)|k\in\mathbb{Z}\}$ is called a \textit{$j$-string} in the crystal graph (see \cite{Ka2}). In any $j$-string, there exists a unique polytope $P_{0}$ such that $\varepsilon_{j}(P_{0})=0$. We call $P_{0}$ a \textit{$j$-extremal} polytope. The above theorem indicates that for each $j$-string, $AM_{j}=\tildef_{j}$ holds for all MV polytopes $P$ lying far enough from the $j$-extremal polytope and the BZ-datum of $\tildef_{j}(P)$ can be explicitly calculated. So for each $j$-string, there are infinitely many MV polytopes for which $AM_{j}=\tildef_{j}$ holds.

\subsection{A characterization of AM operators}\label{subsec characterize AM operators}
The original definition of AM operators uses the GGMS datum $\mu_{\bullet}$. In this subsection we give a characterization of AM operators using the BZ datum $M_{\bullet}$, which will be more convenient for us to prove our results.

\begin{lem}\label{lem Gamma_j Gamma^j}
For $\gamma\in\Gamma$, we have $\gamma\in\Gamma^{j}\Longleftrightarrow\langle h_{j},\gamma\rangle\leq 0$, $\gamma\in\Gamma_{j}\Longleftrightarrow\langle h_{j},\gamma\rangle>0$.
\end{lem}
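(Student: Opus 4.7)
The key identity I would use is $\langle h_j, w\varpi_i\rangle = \langle w^{-1}h_j, \varpi_i\rangle$, which follows from the $W$-invariance of the canonical pairing between $\mathfrak{h}$ and $\mathfrak{h}^{\ast}$, together with the fact that $W$ acts compatibly on roots and coroots. Combined with the standard characterization $s_j w < w \Leftrightarrow w^{-1}\alpha_j \in -Q_{+}$ (equivalently, $s_j w > w \Leftrightarrow w^{-1}\alpha_j \in Q_{+}$), this reduces the question to analyzing the sign of $\langle w^{-1}h_j,\varpi_i\rangle$.

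The plan is as follows. First, if $w \in W_j^{-}$, then $w^{-1}\alpha_j = -\beta$ for some positive root $\beta$, so $w^{-1}h_j = -\beta^{\vee}$ is minus a positive coroot. Since $\varpi_i$ is dominant, $\langle \beta^{\vee}, \varpi_i\rangle \geq 0$, and hence $\langle h_j, w\varpi_i\rangle \leq 0$. This immediately yields the inclusion $\Gamma^{j} \subseteq \{\gamma\in\Gamma : \langle h_j,\gamma\rangle \leq 0\}$.

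For the converse, suppose $\gamma \in \Gamma$ satisfies $\langle h_j,\gamma\rangle \leq 0$ and fix any expression $\gamma = w\varpi_i$. If $w \in W_j^{-}$ we are done. Otherwise $w \in W_j^{+}$, and the symmetric argument gives $\langle h_j,\gamma\rangle \geq 0$; combined with the hypothesis this forces $\langle h_j,\gamma\rangle = 0$, i.e.\ $\langle (w^{-1}\alpha_j)^{\vee},\varpi_i\rangle = 0$. Therefore the reflection $w^{-1}s_j w = s_{w^{-1}\alpha_j}$ fixes $\varpi_i$, whence $s_j w\,\varpi_i = w\varpi_i = \gamma$. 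Since $w \in W_j^{+}$ means $s_j w > w$, applying $s_j$ gives $s_j(s_j w) = w < s_j w$, so $s_j w \in W_j^{-}$ and $\gamma \in W_j^{-}\varpi_i \subseteq \Gamma^{j}$.

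The second equivalence $\gamma \in \Gamma_j \Leftrightarrow \langle h_j,\gamma\rangle > 0$ is then formal from $\Gamma_j = \Gamma \setminus \Gamma^{j}$. No serious obstacle is expected; the only delicate point is the boundary case $\langle h_j,\gamma\rangle = 0$, in which $\gamma$ admits two representations (one with $w\in W_j^{+}$ and one with $s_j w\in W_j^{-}$), and this is precisely what the reflection argument above takes care of.
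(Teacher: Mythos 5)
Your proof is correct and follows essentially the same route as the paper: compute $\langle h_j, w\varpi_i\rangle = \langle w^{-1}h_j,\varpi_i\rangle$, use that $w \in W_j^{\mp}$ makes $w^{-1}h_j$ a negative (resp.\ positive) coroot paired against the dominant weight $\varpi_i$, and then resolve the boundary case $\langle h_j,\gamma\rangle = 0$ by observing that $s_j$ fixes $\gamma$ so one may replace $w$ by $s_jw \in W_j^{-}$. Your write-up is a little more explicit about the reflection $s_{w^{-1}\alpha_j}$, but the paper reaches the same conclusion directly from $s_j\gamma = \gamma$; incidentally the paper's line ``implies $\gamma\in\Gamma_j$'' is a typo for $\Gamma^j$, which your version states correctly.
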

\begin{proof}
If $\gamma\in\Gamma^{j}$, then by definition there exist $w\in W_{j}^{-}$ and $i\in I$ such that $\gamma=w\varpi_{i}$. Thus $\langle h_{j},\gamma\rangle=\langle w^{-1}(h_{j}),\varpi_{i}\rangle\leq 0$. Since $s_{j}w<w$, we know that $w^{-1}(h_{j})\in Q_{-}$. Hence $\langle h_{j},\gamma\rangle\leq 0$. Similarly, if $\gamma\in\Gamma_{j}$, we have $\langle h_{j},\gamma\rangle\geq 0$.

Now we only need to prove that $\langle h_{j},\gamma\rangle=0$ implies $\gamma\in\Gamma_{j}$. Assume that $\gamma=w\varpi_{i}$ for some $w\in W$ and $i\in I$. If $w\in W_{j}^{-}$ we are done. If not, let $w'=s_{j}w$ then $\gamma=w'\varpi_{i}$ and $w'\in W_{j}^{-}$.
\end{proof}

The following lemma is in fact a generalization of \cite{Kam1} Proposition 5.3 and the proof is similar. Nevertheless, for readers' convenience, we present a complete proof here.

\begin{lem}\label{lem AM op in M}
Let $P=P(M_{\bullet})$ be an MV polytope, $j\in I$ and $P'=P(M'_{\bullet})$ be the smallest pseudo-Weyl polytope satisfying the following three conditions:

(a). $M'_{\gamma}=M_{\gamma}$, for all $\gamma\in\Gamma^{j}$;

(b). $M'_{\varpi_{i}}=M_{\varpi_{i}}$ for $i\neq j$ and $M'_{\varpi_{j}}=M_{\varpi_{j}}-1$;

(c). $M'_{\gamma} \leq \min\{M_{\gamma}, M_{s_{j}\gamma}+c_{j}\langle h_{j},\gamma\rangle\}$, for all $\gamma\in\Gamma_{j}$.

Then $P'=AM_{j}(P)$.
\end{lem}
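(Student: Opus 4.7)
My strategy is to establish a dictionary between the GGMS-datum conditions (i)--(iv) defining $AM_j(P)$ and the BZ-datum conditions (a)--(c) characterizing $P'$, showing they cut out the same class of pseudo-Weyl polytopes. Once this equivalence is in place, the two ``smallest polytope'' characterizations coincide and $P'=AM_j(P)$ follows.

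Using the defining identity $\langle\mu_w,w\varpi_i\rangle=M_{w\varpi_i}$ of the GGMS datum together with Lemma \ref{lem Gamma_j Gamma^j} and $\Gamma^j=\bigcup_iW_j^-\varpi_i$, condition (i), read coordinate by coordinate, becomes $M'_{w\varpi_i}=M_{w\varpi_i}$ for all $w\in W_j^-$ and $i\in I$, which is (a); analogously (ii) becomes (b). The containment $\mu_w\in P'$ for $w\in W_j^+$ (condition (iii)) reads $M'_\gamma\le\langle\mu_w,\gamma\rangle$ for every $\gamma$. Under (a), the inequalities for $\gamma\in\Gamma^j$ are automatic from $\mu_w\in P$. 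For $\gamma\in\Gamma_j$, writing $\gamma=w_0\varpi_i$ with $w_0\in W_j^+$ gives $\min_{w\in W_j^+}\langle\mu_w,\gamma\rangle=M_\gamma$, so (iii) matches the first half of (c), namely $M'_\gamma\le M_\gamma$ for $\gamma\in\Gamma_j$.

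The crucial step is the equivalence of (iv) with the second half of (c). Expanding
\[
\langle r_j(\mu_w),\gamma\rangle=\langle\mu_w,s_j\gamma\rangle+c_j(P)\langle h_j,\gamma\rangle,
\]
condition (iv) says $M'_\gamma\le\langle\mu_w,s_j\gamma\rangle+c_j(P)\langle h_j,\gamma\rangle$ for all $\gamma$ and all admissible $w\in W_j^-$. For $\gamma\in\Gamma^j$, setting $k=-\langle h_j,\gamma\rangle\ge 0$ yields $s_j\gamma=\gamma+k\alpha_j$, and the requirement rearranges to
\[
(\langle\mu_w,\gamma\rangle-M_\gamma)+k(\langle\mu_w,\alpha_j\rangle-c_j(P))\ge 0,
\]
a sum of two nonnegative terms from $\mu_w\in P$ and the admissibility of $w$. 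For $\gamma\in\Gamma_j$ the forward direction is immediate from $\langle\mu_w,s_j\gamma\rangle\ge M_{s_j\gamma}$; to extract the second half of (c) from (iv), I would feed in the distinguished element $w_1=s_jw_0\in W_j^-$, where $\gamma=w_0\varpi_i$ with $w_0\in W_j^+$, which satisfies $s_j\gamma=w_1\varpi_i$ and hence $\langle\mu_{w_1},s_j\gamma\rangle=M_{s_j\gamma}$.

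The main obstacle I anticipate is verifying that this candidate $w_1$ is admissible for (iv), namely $\langle\mu_{w_1},\alpha_j\rangle\ge c_j(P)$. My plan is to compute $\langle\mu_{w_1},\alpha_j\rangle$ from the defining equations $\langle\mu_{w_1},w_1\varpi_k\rangle=M_{w_1\varpi_k}$: expanding $\alpha_j$ in the basis $\{w_1\varpi_k\}$ gives a linear expression in the $M_{w_1\varpi_k}$'s, which should be bounded below by $c_j(P)$ using the global inequality $M_\gamma+M_{-\gamma}\le 0$ (a consequence of the normalisation $\mu_{w_0}=0$) together with the edge inequality (\ref{equ edge inequality}) at $(e,j)$. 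With this admissibility in hand the two classes of pseudo-Weyl polytopes coincide and $P'=AM_j(P)$.
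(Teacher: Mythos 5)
Your overall plan — show that conditions (i)--(iv) and (a)--(c) cut out the same class of pseudo-Weyl polytopes — is exactly the paper's strategy, and your treatment of the forward implication (a)--(c)~$\Rightarrow$~(i)--(iv) is correct: for $\gamma\in\Gamma^j$ the decomposition into the two nonnegative terms $(\langle\mu_w,\gamma\rangle-M_\gamma)$ and $k(\langle\mu_w,\alpha_j\rangle-c_j(P))$ works, and for $\gamma\in\Gamma_j$ the bound via $\langle\mu_w,s_j\gamma\rangle\ge M_{s_j\gamma}$ plus (c) works.

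The gap is in the backward implication. You propose to derive the second half of (c) from (iv) by plugging in $w_1=s_jw_0\in W_j^-$ and then proving, as a separate fact, that $w_1$ is always admissible for (iv), i.e.\ that $\langle\mu_{w_1},\alpha_j\rangle\ge c_j(P)$ always holds. This is not true in general; you will not be able to establish it from $M_\gamma+M_{-\gamma}\le 0$ and the edge inequalities, because it simply fails for some MV polytopes. (Indeed, if it always held then condition (iv) would apply unconditionally to every $w\in W_j^-$ and its hypothesis would be vacuous.) The correct move is to split into two cases. If $\langle\mu_{w_1},\alpha_j\rangle\ge c_j(P)$ then (iv) applies and gives $M''_\gamma\le\langle r_j(\mu_{w_1}),\gamma\rangle=M_{s_j\gamma}+c_j(P)\langle h_j,\gamma\rangle$ directly, as you observe. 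If instead $\langle\mu_{w_1},\alpha_j\rangle<c_j(P)$, then writing $r_j(\mu_{w_1})=\mu_{w_1}+(c_j(P)-\langle\mu_{w_1},\alpha_j\rangle)h_j$ and using $\langle h_j,\gamma\rangle>0$ for $\gamma\in\Gamma_j$ gives $\langle\mu_{w_1},\gamma\rangle<\langle r_j(\mu_{w_1}),\gamma\rangle=M_{s_j\gamma}+c_j(P)\langle h_j,\gamma\rangle$; combined with $M''_\gamma\le M_\gamma\le\langle\mu_{w_1},\gamma\rangle$ (the first half of (c), already established from (iii)) this yields the second half of (c) without ever invoking (iv). So the admissibility claim you flag as an obstacle should not be proved — it should be bypassed by this case split, which is exactly what the paper does.
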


\begin{proof}
First we need to prove that $P'$ satisfies the conditions (i) to (iv) in the definition of AM operators.

It is easy to see that (a) and (b) are equivalent to (i) and (ii) respectively. By (c) we know that $M'_{\gamma}\leq M_{\gamma}$ for all $\gamma\in\Gamma_{j}$. Together with (a) we have $M'_{\gamma}\leq M_{\gamma}$ for all $\gamma\in\Gamma$, which by definition implies (iii).

Now it remains to prove (iv). Let $w\in W_{j}^{-}$ such that $\langle\mu_{w}, \alpha_{j}\rangle\geq c_{j}$. We need to show that $r(\mu_{w})\in P'$, i.e. $\langle r(\mu_{w}),\gamma\rangle\geq M_{\gamma}'$ for all $\gamma\in\Gamma$.

First we consider $\gamma=s_{j}v\varpi_{i}$ for any $v\in W_{j}^{-}$ and $i\in I$. By definition of MV polytopes, we have $\langle\mu_{w},v\varpi_{i}\rangle\geq M_{v\varpi_{i}}=\langle\mu_{v},v\varpi_{i}\rangle$. From this inequality we can deduce that $\langle r_{j}(\mu_{w}),\gamma\rangle\geq\langle r_{j}(\mu_{v}),\gamma\rangle$. We claim that $\langle r_{j}(\mu_{v}),\gamma\rangle\geq M'_{\gamma}$. Hence $\langle r_{j}(\mu_{w}),\gamma\rangle\geq M'_{\gamma}$.

To prove the claim, note that
$$\langle r(\mu_{w}),\gamma\rangle=\langle s_{j}(\mu_{w})+c_{j}h_{j}, s_{j}w\varpi_{i}\rangle=M_{w\varpi_{i}}+c_{j}\langle h_{j},\gamma\rangle.$$
It is clear that $\gamma\in\Gamma_{j}$ as $v\in W_{j}^{-}$. By our assumption (c) we have $M'_{\gamma}\leq M_{w\varpi_{i}}+c_{i}\langle h_{i},\gamma\rangle$.

Next we consider $\gamma=v\varpi_{i}$ for any $v\in W_{j}^{-}$ and $i \in I$. Since $v\in W_{j}^{-}$, we know that $\langle h_{j},\gamma\rangle\leq 0$. Let $d_{j}=\langle\mu_{w}, \alpha_{j}\rangle-c_{j}$. Then $d_{j}\ge 0$ and $r(\mu_{w})=\mu_{w}-d_{j}h_{j}$. We have
$$\langle r(\mu_{w}),\gamma\rangle=\langle\mu_{w},\gamma\rangle-d_{j}\langle h_{j},\gamma\rangle\geq\langle\mu_{w},\gamma\rangle.$$
Note that $\langle\mu_{w},\gamma\rangle\geq M_{\gamma}$. Also we know that $M_{\gamma}=M_{\gamma}'$ since $\gamma=v\varpi_{i}\in\Gamma^{j}$. Hence $\langle r(\mu_{w}),\gamma\rangle\geq M_{\gamma}'$.

Since $W_{j}^{-}\cup s_{j}W_{j}^{-}=W$ we see that $\langle r(\mu_{w}),\gamma\rangle\geq M_{\gamma}'$ for all $\gamma\in \Gamma$. We have proved that condition (iv) is satisfied.

Now, let $P''=P(M''_{\bullet})$ be a pseudo-Weyl polytope satisfying (i) to (iv), we need to prove $P'\subseteq P''$, for which we only need to show that $P''$ satisfies (a), (b) and (c).

We only need to prove (c). Since $P''$ satisfies (iii), we have that $M''_{\gamma}\leq M_{\gamma}$ for all $\gamma\in\Gamma_{j}$. It remains to prove $M''_{\gamma}\leq M_{s_{j}\gamma}+c_{j}\langle h_{j},\gamma \rangle$ for all $\gamma\in\Gamma_{j}$.

There exists $w\in W_{j}^{-}$ such that $\gamma=s_{j}w\varpi_{i}$ since $\gamma\in\Gamma_{j}$. If $\langle \mu_{w}, \alpha_{j}\rangle\geq c_{j}$, then $r_{j}(\mu_{w})\in P''$ by (iv). We have
$$M''_{\gamma}\leq\langle r_{j}(\mu_{w}),\gamma\rangle=M_{s_{j}\gamma}+c_{j}\langle h_{j},\gamma\rangle.$$

If $\langle \mu_{w}, \alpha_{j}\rangle < c_{j}$, we have
$$M''_{\gamma}<M_{\gamma}\leq\langle \mu_{w},\gamma\rangle<\langle r_{j}(\mu_{w}),\gamma\rangle=M_{s_{j}\gamma}+c_{j}\langle h_{j},\gamma\rangle$$
as desired.
\end{proof}

\begin{rem}\label{rem enough to prove main}
Assume that $P=P(M_{\bullet})$ is an MV polytope and $\tildef_{j}(P)=P(M'_{\bullet})$. By Theorem \ref{thm Kamnitzer} we know that $M'_{\bullet}$ satisfies the condition (a) and (b) in the above lemma. Therefore, to prove Theorem \ref{thm main simply-laced case}, it is enough to show that $M'_{\bullet}$ satisfies the condition (c).
\end{rem}

\section{MV polytopes and preprojective algebras}\label{sec MV polytopes and preproj algs}
In this section we briefly recall the relationship between MV polytopes and representations of preprojective algebras developed by Baumann and Kamnitzer \cite{BK}. It is the main tool to prove our results in the simply-laced case. We will also need some knowledge on the geometric realization of crystals via nilpotent varieties \cite{KS}, which are certain varieties of modules over preprojective algebras.

\subsection{Preprojective algebras and their modules}\label{subsec Preproj algs and modules}
Let $Q$ be a quiver whose underlying graph is the Dynkin diagram of $\mathfrak{g}$. Let $\Omega$ be the set of arrows in $Q$ and $s,t:\Omega\rightarrow I$ be the two maps indicating the source and target of an arrow. We define a new quiver $\overline{Q}$ from $Q$ by adding to each arrow $a\in\Omega$ an opposite arrow $a^{\ast}$. Set $\Omega^{\ast}=\{a^{\ast}|a\in\Omega\}$. So the set of arrows in $\overline{Q}$ is $H=\Omega\cup\Omega^{\ast}$. For each $a\in\Omega$, set $(a^{\ast})^{\ast}=a$. We also define a map $\epsilon:H\rightarrow\{\pm 1\}$ by assigning $\epsilon(a)=1$ if $a\in\Omega$ and $\epsilon(a)=-1$ if $a\in\Omega^{\ast}$. The \textit{preprojective algebra} $\Lambda=\Lambda(Q)$ is defined as the path algebra $\mathbb{C}\overline{Q}$ modulo the two-sided ideal generated by $\sum_{a\in H}\epsilon(a)aa^{\ast}$ (see \cite{R} for details).

A $\Lambda$-module $X$ consists of an $I$-graded $\mathbb{C}$-vector space $X=\oplus_{i\in I}X_{i}$ and a collection of linear maps $X_{a}:X_{s(a)}\rightarrow X_{t(a)}$ for each arrow $a\in H$ satisfying the following relations:
$$\sum_{a\in H, t(a)=i}\epsilon(a)X_{a}X_{a^{\ast}}=0,\ \text{for each}\ i\in I.$$

In our case ($\mathfrak{g}$ is a simple Lie algebra) it is known that $\Lambda$ is a finite-dimensional algebra. Denote by $\bmod\Lambda$ the category of finite-dimensional $\Lambda$-modules. For any $\Lambda$-module $X$, we call $\dimv X=(\dim X_{i})_{i\in I}\in\mathbb{N}^{n}$ the dimension vector of $X$. It will also be convenient for us to treat $\dimv X=\sum_{i\in I}(\dim X_{i})\alpha_{i}$ as an element in the positive root lattice. Let $(-,-):\mathfrak{h}^{\ast}\times\mathfrak{h}^{\ast}\rightarrow\mathbb{C}$ be the standard symmetric bilinear form determined by $(\alpha_{i},\alpha_{k})=\langle h_{i},\alpha_{k}\rangle$ for any $i,k\in I$.

The following lemma is very useful:
\begin{lem}[\cite{CB}]\label{lem Crawley-Boevey formula}
For any $\Lambda$-modules $X,Y$, the following holds:
\begin{equation}\label{equ Crawley-Boevey formula}
\dim\Ext_{\Lambda}^{1}(X,Y)=\dim\Hom_{\Lambda}(X,Y)+\dim\Hom_{\Lambda}(Y,X)-(\dimv X,\dimv Y).
\end{equation}
In particular, we have $\dim\Ext_{\Lambda}^{1}(X,Y)=\dim\Ext_{\Lambda}^{1}(Y,X)$.
\end{lem}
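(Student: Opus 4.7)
The plan is to compute
\[
\dim\Hom_\Lambda(X,Y) + \dim\Hom_\Lambda(Y,X) - \dim\Ext^1_\Lambda(X,Y)
\]
as the Euler characteristic of a short three-term complex of finite-dimensional vector spaces whose termwise dimensions assemble exactly into the symmetric form $(\dimv X, \dimv Y)$. The main input is the standard bimodule sequence for the preprojective algebra,
\begin{equation*}
\bigoplus_{i \in I} \Lambda e_i \otimes_{\mathbb{C}} e_i \Lambda \to \bigoplus_{a \in H} \Lambda e_{t(a)} \otimes_{\mathbb{C}} e_{s(a)} \Lambda \to \bigoplus_{i \in I} \Lambda e_i \otimes_{\mathbb{C}} e_i \Lambda \to \Lambda \to 0,
\end{equation*}
whose middle differential encodes the preprojective relation $\sum_{a\in H} \epsilon(a) a a^*$. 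Tensoring this on the right over $\Lambda$ with $X$ yields the start of a projective resolution of $X$, and applying $\Hom_\Lambda(-, Y)$ produces the complex I will analyse.

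\textbf{Identifying the cohomologies.} Concretely, consider
\begin{equation*}
C^\bullet(X,Y)\colon\ \bigoplus_{i \in I} \Hom_{\mathbb{C}}(X_i, Y_i) \xrightarrow{d^0} \bigoplus_{a \in H} \Hom_{\mathbb{C}}(X_{s(a)}, Y_{t(a)}) \xrightarrow{d^1} \bigoplus_{i \in I} \Hom_{\mathbb{C}}(X_i, Y_i),
\end{equation*}
with $d^0((\phi_i)) = (Y_a \phi_{s(a)} - \phi_{t(a)} X_a)_a$ and $d^1$ built from the relation using the sign function $\epsilon$. Three identifications need to be verified: (a) $\ker d^0 = \Hom_\Lambda(X,Y)$, which is immediate from the definition of a morphism of $\Lambda$-modules; (b) $\coker d^1 \cong \Hom_\Lambda(Y,X)^*$, proved via the trace pairing $\sum_i \tr(\phi_i \circ -)$ and checking that it descends to a perfect pairing; (c) the middle cohomology equals $\Ext^1_\Lambda(X,Y)$, which follows once the bimodule sequence above is known to be exact in sufficient degrees. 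The alternating dimension sum is then
\[
2\sum_{i\in I} \dim X_i \dim Y_i - \sum_{a\in H} \dim X_{s(a)} \dim Y_{t(a)},
\]
which equals $(\dimv X, \dimv Y)$ since $(\alpha_i,\alpha_k) = 2\delta_{ik} - m_{ik}$ (with $m_{ik}$ the number of edges between vertices $i$ and $k$), and each such edge contributes exactly two arrows to $H$, one in $\Omega$ and one in $\Omega^*$.

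\textbf{Main obstacle.} The crux is step (c), together with (b). Both rest on the self-dual bimodule sequence being exact in the relevant range. For graphs of affine or wild type this is automatic from the $2$-Calabi--Yau property of $\Lambda$, but for Dynkin types, which is the case considered here, $\Lambda$ is not $2$-CY in the strict derived sense and the leftmost map of the bimodule sequence fails to be injective; the required exactness at the middle and right positions must therefore be verified by hand, and this is the essential technical content of the original argument of Crawley-Boevey. Once this input is in place, the resulting formula is visibly symmetric in $(X,Y)$, and in particular $\dim\Ext^1_\Lambda(X,Y) = \dim\Ext^1_\Lambda(Y,X)$ drops out immediately.
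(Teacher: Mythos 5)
The paper offers no proof of its own; this lemma is simply cited from \cite{CB}. Your outline --- write down the three-term complex $C^\bullet(X,Y)$ attached to the bimodule presentation of $\Lambda$, identify its three cohomology groups as $\Hom_\Lambda(X,Y)$, $\Ext^1_\Lambda(X,Y)$, $\Hom_\Lambda(Y,X)^*$, and read the formula off the Euler characteristic --- is the correct structure, and your bookkeeping for the bilinear form and the two-arrows-per-edge count in $H$ is right.

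I would push back on one point. You locate the ``main obstacle'' in the exactness of the bimodule sequence at the middle and right positions. That route does work (right-position exactness is the generic projective bimodule presentation, and middle-position exactness, once established, propagates through $-\otimes_\Lambda X$ because the bimodule terms are projective on each side), but it is not what is strictly needed, and it is not where the shortest path goes. One can identify $H^1(C^\bullet(X,Y))$ with $\Ext^1_\Lambda(X,Y)$ directly: a $1$-cocycle $(\psi_a)_{a\in H}$, $\psi_a\colon X_{s(a)}\to Y_{t(a)}$, is exactly the off-diagonal block data needed to make
\[
Z_a=\begin{pmatrix} Y_a & \psi_a \\ 0 & X_a \end{pmatrix}
\]
satisfy the preprojective relation on the graded space $Y\oplus X$, i.e.\ to define an extension $0\to Y\to Z\to X\to 0$, and $(\psi_a)$ is a coboundary precisely when that extension splits. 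This identifies $H^1$ with $\Ext^1$ without invoking any exactness of the bimodule complex. The remaining identification $H^2\cong\Hom_\Lambda(Y,X)^*$ then falls out of the evident self-duality $C^\bullet(X,Y)\cong C^\bullet(Y,X)^*$ (up to reindexing $i\leftrightarrow 2-i$), combined with the already-established $H^0$ computation. Either way the symmetry $\dim\Ext^1_\Lambda(X,Y)=\dim\Ext^1_\Lambda(Y,X)$ is immediate from the resulting formula.
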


For any $i \in I$, let $S_{i}$ be the (one-dimensional) simple $\Lambda$-module concentrated at $i$, $I_{i}$ (resp. $P_{i}$) be the injective hull (resp. projective cover) of $S_{i}$.

For any $\Lambda$-module $X$, the \textit{$i$-socle} (resp. \textit{$i$-top}) of $X$ is the $S_{i}$-isotypic component of the socle (resp. top) of $X$, denoted by $\soc_{i}X$ (resp. $\tp_{i}X$). For a sequence $(j_{1},\ldots,j_{t})$ of indices with $1\leq j_{p}\leq n$ for all $p$, there is a unique chain
$$0=X_{0}\subseteq X_{1}\subseteq \cdots \subseteq X_{t}\subseteq X$$
of submodules of $X$ such that $X_{p}/X_{p-1}=\soc_{j_{p}}(X/X_{p-1})$. Define $\soc_{(j_{1},\ldots,j_{t})}(X)$ to be the submodule $X_{t}$ in the above chain.

\subsection{Geometric crystals}\label{subsec geometric crystals}
For any $\nu=\sum_{i\in I}\nu_{i}\alpha_{i}\in Q_{+}$, let $\Lambda(\nu)$ be the variety of all finite-dimensional $\Lambda$-modules with dimension vector $\nu$. It is an affine algebraic variety and the group $G(\nu)=\prod_{i\in I}\GL(\mathbb{C}^{\nu_{i}})$ acts on it by conjugation. Denote by $\Irr\Lambda(\nu)$ the set of irreducible components of $\Lambda(\nu)$. Let $\mathcal{B}=\bigsqcup_{\nu}\Irr\Lambda(\nu)$.

For any $Z\in\Irr\Lambda(\nu)$, we define the weight of $Z$ to be $\wt(Z)=-\nu\in Q_{-}$.

For any $i\in I$ and $c\in\mathbb{N}$, define
$$\Lambda(\nu)_{i,c}=\{X \in\Lambda(\nu)|\dim\tp_{i}X=c\}.$$
They are locally closed subsets and $\Lambda(\nu)=\bigsqcup_{c\in\mathbb{N}}\Lambda(\nu)_{i,c}$. Thus for any $Z\in\Irr\Lambda(\nu)$, there is a unique $c$ such that $Z\cap\Lambda(\nu)_{i,c}$ is open dense in $Z$. We then define two maps $\varepsilon_{i}(Z)=c$ and $\varphi_{i}(Z)=\varepsilon_{i}(Z)+\langle h_{i},\wt(Z)\rangle$.

It is also known that the map $Z\mapsto Z\cap\Lambda(\nu)_{i,c}$ gives a bijection
$$\{Z\in\Irr\Lambda(\nu)|\varepsilon_{i}(Z)=c\}\longleftrightarrow\Irr\Lambda(\nu)_{i,c}.$$

Denote by $\Omega(\nu,i,c)$ the set of triples $(X,Y,f)$ where $X\in\Lambda(\nu)_{i,0}$, $Y\in\Lambda(\nu+c\alpha_{i})_{i,c}$ and $f:X\rightarrow Y$ is an injective morphism of $\Lambda$-modules. We then have the following diagram
$$\Lambda(\nu)_{i,0}\xleftarrow{p}\Omega(\nu,i,c)\xrightarrow{q}\Lambda(\nu+c\alpha_{i})_{i,c},$$
where $p$ and $q$ are the first and second projections. Then $p$ is a locally trivial fibration with a smooth and connected fibre and $q$ is a principal $G(\nu)$-bundle. Hence the above diagram induces the following bijection
\begin{equation*}
\xymatrix{
\{Z\in\Irr\Lambda(\nu)|\varepsilon_{i}(Z)=0\} \ar @<0.5ex> [r]^-{\widetilde{f}_{i}^{c}}  & \{Z\in\Irr\Lambda(\nu+c\alpha_{i})|\varepsilon_{i}(Z)=c\} \ar @<0.5ex>[l]^-{\widetilde{e}_{i}^{\max}}
}
\end{equation*}

Then we define operators $\widetilde{e}_{i},\widetilde{f}_{i}:\mathcal{B}\rightarrow\mathcal{B}\cup\{0\}$ as follows:
$$\widetilde{e}_{i}(Z)=\widetilde{f}_{i}^{c-1}\widetilde{e}_{i}^{\max}(Z),\ \widetilde{f}_{i}(Z)=\widetilde{f}_{i}^{c+1}\widetilde{e}_{i}^{\max}(Z),\ \text{for any}\ Z\ \text{with}\ \varepsilon_{i}(Z)=c.$$

\begin{thm}[\cite{KS}]\label{thm KS crystal iso}
Equipped with the maps $\wt,\varepsilon_{i},\varphi_{i},\widetilde{e}_{i},\widetilde{f}_{i}$ defined above, the set $\mathcal{B}=\bigsqcup_{\nu\in Q_{+}}\Irr\Lambda(\nu)$ is a crystal and isomorphic to the crystal structure on the canonical basis $\mathbf{B}$.
\end{thm}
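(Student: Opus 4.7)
The plan is to verify that $\mathcal{B}$ satisfies the axioms of Kashiwara--Saito's intrinsic characterization of $B(\infty)$, via three stages: first set up the crystal structure itself, then equip it with a compatible $\ast$-involution coming from the self-duality of the preprojective algebra, and finally check the axioms of the characterization theorem so that uniqueness of $B(\infty)$ delivers the isomorphism with $\mathbf{B}$.

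For the first stage, the geometric input is exactly the correspondence $\Lambda(\nu)_{i,0}\xleftarrow{p}\Omega(\nu,i,c)\xrightarrow{q}\Lambda(\nu+c\alpha_{i})_{i,c}$ already recorded in the excerpt: since $p$ is a locally trivial fibration with smooth connected fibre and $q$ is a principal $G(\nu)$-bundle, both induce bijections on irreducible components, so the operators $\tildef_i,\tildee_i$ are well defined and the resulting Kashiwara operators are mutually inverse where defined. The crystal axioms are then formal, as $\wt(Z)=-\dimv$, $\varepsilon_i$ is the generic dimension of $\tp_i$, and $\varphi_i$ is forced by axiom (1). For the second stage, the automorphism $a\mapsto\epsilon(a)a^{\ast}$ of $\overline{Q}$ gives an isomorphism $\Lambda\cong\Lambda^{\op}$, inducing a duality on $\bmod\Lambda$ that restricts to an involution $\ast$ on each $\Lambda(\nu)$ and descends to an involution on $\mathcal{B}$. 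Setting $\tildee_i^{\ast}=\ast\tildee_i\ast$ and $\varepsilon_i^{\ast}=\varepsilon_i\circ\ast$ produces a second system of crystal operators and statistics.

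The third stage invokes Kashiwara--Saito's characterization theorem: a crystal carrying two such families is forced to be $B(\infty)\cong\mathbf{B}$ as soon as one checks (i) a unique weight-zero element, (ii) every other element lies in the image of some $\tildee_i$ or $\tildee_i^{\ast}$, (iii) the commutation $\tildee_i\tildee_j^{\ast}=\tildee_j^{\ast}\tildee_i$ for $i\neq j$, and (iv) $\varepsilon_i+\varepsilon_i^{\ast}+\langle h_i,\wt\rangle\geq 0$ with the boundary case pinned down. Here (i) is immediate since $\Lambda(0)$ is a point; (ii) follows by induction on $|\nu|$, using that if $\varepsilon_i(Z)=\varepsilon_i^{\ast}(Z)=0$ for all $i$ then a generic $X\in Z$ has neither $i$-top nor $i$-socle, forcing $X=0$; and (iv) is read off from Lemma \ref{lem Crawley-Boevey formula} applied to $X$ and $S_i$ together with the interpretation of $\varepsilon_i,\varepsilon_i^{\ast}$ as generic top and socle multiplicities. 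The main obstacle I expect is the commutation axiom (iii): controlling simultaneously the generic $i$-top and the generic $j$-socle inside a single irreducible component is not a one-variable argument and requires building a ``double'' correspondence over $\Lambda(\nu)_{i,0}\cap\ast(\Lambda(\nu)_{j,0})$ and showing that the two iterated lifting procedures $\tildee_i\tildee_j^{\ast}$ and $\tildee_j^{\ast}\tildee_i$ both pick out the same dense open component of this locus.
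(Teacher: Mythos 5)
The paper offers no proof of this theorem; it simply cites Kashiwara and Saito [KS]. Your reconstruction is a sketch of the [KS] argument and follows its overall strategy correctly: build the geometric crystal via the correspondence $\Lambda(\nu)_{i,0}\leftarrow\Omega(\nu,i,c)\rightarrow\Lambda(\nu+c\alpha_i)_{i,c}$, equip $\mathcal{B}$ with a $*$-involution coming from the opposite-algebra isomorphism of $\Lambda$ (so that $\varepsilon_i^*$ becomes the generic dimension of the $i$-socle), and then invoke the abstract characterization of $B(\infty)$. Your observation that the inequality $\varepsilon_i+\varepsilon_i^*+\langle h_i,\wt\rangle\geq 0$ falls out of the Crawley--Boevey formula applied to the pair $(S_i,X)$ is exactly the right mechanism, with equality precisely when $\Ext^1_\Lambda(S_i,X)=0$ for generic $X$.

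However, your axiom list is incomplete in a way that matters, and you misplace the real difficulty. The commutation $\widetilde{e}_i\widetilde{e}_j^*=\widetilde{e}_j^*\widetilde{e}_i$ for $i\neq j$ that you flag as the main obstacle is in fact the comparatively easy part: adjusting the $i$-top and the $j$-socle are independent generically, and a straightforward double correspondence handles it. What is genuinely delicate in [KS], and what you omit entirely, are the \emph{same-vertex} conditions governing $\widetilde{e}_i$ versus $\widetilde{e}_i^*$: that whenever $\varepsilon_i(Z)+\varepsilon_i^*(Z)>-\langle h_i,\wt(Z)\rangle$ one has $\widetilde{f}_i\widetilde{f}_i^*Z=\widetilde{f}_i^*\widetilde{f}_iZ$ together with $\varepsilon_i^*(\widetilde{f}_iZ)=\varepsilon_i^*(Z)$ and $\varepsilon_i(\widetilde{f}_i^*Z)=\varepsilon_i(Z)$, and that in the boundary case of equality $\widetilde{f}_iZ=\widetilde{f}_i^*Z$. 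Verifying these requires controlling how a generic module's $i$-top and $i$-socle can overlap (a copy of $S_i$ may be simultaneously a summand of both), and this is where the bulk of the geometric work in [KS] lies. Without addressing the $i=j$ case your sketch is missing the heart of the verification.
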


\subsection{MV polytopes via preprojective algebras}\label{subsec MV via prepro alg}
Let $\gamma$ be a chamber weight. Then there exist $i\in I$ and $w\in W$ such that $w$ has a reduced expression $w=s_{k_{1}}s_{k_{2}}\cdots s_{k_{s}}$ with $k_{s}=i$ and $\gamma=w\varpi_{i}$. We define a $\Lambda$-module
$$N(-w\varpi_{i})=\soc_{(k_{s},k_{s-1},\ldots,k_{1})}(I_{i}).$$
This is the unique (up to isomorphism) $\Lambda$-submodule of $I_{i}$ with dimension vector $\varpi_{i}-w\varpi_{i}$.

\begin{rem}
This definition is due to Gei\ss, Leclerc and Schr\"{o}er \cite{GLS2} while the notation $N(-w\varpi_{i})$ is after Baumann and Kamnitzer who alternatively defined this module using Nakajima's quiver and reflection functors \cite{BK}. A detailed proof of the equivalence between the two definitions can be found in \cite{J}. The choice of the definition and the notation will be convenient for us to prove our results.
\end{rem}

Note that $-w\varpi_{i}$ is also a chamber weight. The following useful lemma can be easily deduced using definitions and \cite{GLS2} Proposition 9.6.

\begin{lem}\label{lem property of N_gamma}
(1). For any $\gamma\in\Gamma^{j}$, $N(\gamma)$ has trivial $j$-top.

(2). For any $\gamma\in\Gamma_{j}$, $\dim\tp_{j}N(\gamma)=\langle h_{j},\gamma\rangle$. And we have the following short exact sequence:
$$0\rightarrow N(s_{j}\gamma)\rightarrow N(\gamma)\rightarrow S_{j}^{\oplus\langle h_{j},\gamma\rangle}\rightarrow 0.$$
\end{lem}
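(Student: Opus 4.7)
The plan is to derive both parts from the iterated socle presentation of $N(\gamma)$ combined with the dimension vector identity $\dimv N(-w\varpi_i) = \varpi_i - w\varpi_i$ from \cite{GLS2}, Proposition~9.6, writing $\gamma = -w\varpi_i$ throughout. To prove part~(2), take $\gamma \in \Gamma_j$, so $\langle h_j,\gamma\rangle > 0$ by Lemma~\ref{lem Gamma_j Gamma^j}. The Bruhat-root argument used in the proof of that lemma forces $w^{-1}\alpha_j$ to be a negative root, hence $s_j w < w$; we may choose a reduced expression $w = s_j s_{k_2}\cdots s_{k_s}$ with $k_s = i$, and set $w' = s_j w$, so that $s_j\gamma = -w'\varpi_i$ with reduced expression $(k_2,\ldots,k_s)$ ending in $s_i$. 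From the definition of iterated socle,
\begin{equation*}
N(s_j\gamma) = \soc_{(k_s,\ldots,k_2)}(I_i) \subseteq \soc_{(k_s,\ldots,k_2,j)}(I_i) = N(\gamma),
\end{equation*}
and the quotient equals $\soc_j(I_i/N(s_j\gamma)) \cong S_j^{\oplus m}$. A dimension-vector comparison,
\begin{equation*}
m\alpha_j = w'\varpi_i - w\varpi_i = \langle h_j, w'\varpi_i\rangle\alpha_j = \langle h_j,\gamma\rangle\alpha_j,
\end{equation*}
gives $m = \langle h_j,\gamma\rangle$ and the required short exact sequence.

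For part~(1), fix $\gamma \in \Gamma^j$, so $\langle h_j,\gamma\rangle \leq 0$. I would first arrange the representation $\gamma = -w\varpi_i$ so that $s_j w > w$: this is automatic when $\langle h_j,\gamma\rangle < 0$ by the converse of the Bruhat-root argument above, and when $\langle h_j,\gamma\rangle = 0$ one has $s_j w\varpi_i = w\varpi_i$, so replacing $w$ by $s_j w$ within the stabilizer coset of $\varpi_i$ achieves this. With such a choice, no reduced expression of $w$ begins with $s_j$. Invoking the uniqueness characterization of $N(-w\varpi_i)$ as the unique submodule of $I_i$ of the prescribed dimension vector (again part of \cite{GLS2}, Proposition~9.6), I would show that $N(\gamma)$ admits no $S_j$-quotient: a surjection $N(\gamma) \twoheadrightarrow S_j$ would combine with the $j$-socle of $I_i/N(\gamma)$ (present once $\langle h_j,\gamma\rangle < 0$ by part~(2) applied to $s_j\gamma$, and trivial when $\langle h_j,\gamma\rangle = 0$) to produce a submodule of $I_i$ of the dimension vector of $N(s_j\gamma)$ distinct from $N(s_j\gamma)$ itself, contradicting uniqueness.

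Finally, the equality $\dim\tp_j N(\gamma) = \langle h_j,\gamma\rangle$ in part~(2) follows by combining the short exact sequence with part~(1) applied to $s_j\gamma \in \Gamma^j$: since $\tp_j N(s_j\gamma) = 0$, the surjection $N(\gamma) \twoheadrightarrow S_j^{\oplus\langle h_j,\gamma\rangle}$ induces an isomorphism on $j$-tops. The main obstacle is the $\tp_j$-vanishing in part~(1); making the uniqueness-based contradiction fully rigorous requires careful tracking of how the socle filtration defining $N(\gamma)$ interacts with prepending $s_j$ to the reduced expression, but once the characterization from \cite{GLS2}, Proposition~9.6 is granted, the remainder is routine bookkeeping with reduced words and dimension vectors.
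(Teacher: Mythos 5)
The paper itself offers no proof of this lemma: it is disposed of in one sentence as ``easily deduced using definitions and \cite{GLS2} Proposition 9.6.'' So your proposal is not competing with a written argument; I can only assess it on its own merits.

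Your treatment of part~(2) is essentially correct, but you should be explicit that $w$ must be taken to be the minimal-length representative of the coset $wW_{\hat i}$ (where $W_{\hat i}$ is the stabilizer of $\varpi_i$). For that choice \emph{every} reduced word of $w$ ends in $s_i$, so the condition $s_j w<w$ forced by $\gamma\in\Gamma_j$ yields a reduced word that begins with $s_j$ and ends with $s_i$, and also guarantees that $(k_2,\ldots,k_s)$ is a valid presentation for $s_j\gamma$. For a generic $w$ with $\gamma=-w\varpi_i$ (e.g.\ replacing the minimal representative by $w_0$ in type $A_2$) a two-sided reduced expression $s_j\cdots s_i$ may simply not exist, so the sentence ``we may choose a reduced expression $w=s_js_{k_2}\cdots s_{k_s}$ with $k_s=i$'' needs this justification. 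With that fix, the inclusion $N(s_j\gamma)\subseteq N(\gamma)$, the identification of the quotient with $\soc_j(I_i/N(s_j\gamma))$, and the dimension-vector count are all fine.

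Part~(1) contains a genuine gap. Your strategy — suppose $N(\gamma)\twoheadrightarrow S_j$ with kernel $K$ and contradict the uniqueness of the submodule of $I_i$ with a prescribed dimension vector — does work when $\langle h_j,\gamma\rangle<0$: then $N(\gamma)\subsetneq N(s_j\gamma)$ with $N(s_j\gamma)/N(\gamma)\cong S_j^{\oplus m}$, $m=-\langle h_j,\gamma\rangle\geq 1$, so $N(s_j\gamma)/K$ is a split extension $S_j^{\oplus(m+1)}$ (since $\Ext^1_\Lambda(S_j,S_j)=0$), and the preimages in $N(s_j\gamma)$ of the $\geq 2$ lines in this semisimple module are distinct submodules of $I_i$ of dimension vector $\dimv N(\gamma)$, contradicting uniqueness. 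But when $\langle h_j,\gamma\rangle=0$ one has $s_j\gamma=\gamma$, $N(s_j\gamma)=N(\gamma)$, and $\soc_j(I_i/N(\gamma))=0$; the construction then produces only $N(\gamma)$ itself, and there is no second submodule to contradict anything. Your phrase ``combine with the $j$-socle of $I_i/N(\gamma)$ (\,\dots\,trivial when $\langle h_j,\gamma\rangle=0$)'' is precisely where the argument evaporates: that socle being trivial is not a help but the reason the method fails. You label part~(1) as ``the main obstacle'' but then call the remainder ``routine bookkeeping''; it is not routine, and the subcase $\langle h_j,\gamma\rangle=0$ needs a different input — for instance the dual description of $N(\gamma)$ as an iterated-top quotient of $P_i$, or the precise top/socle statements of \cite{GLS2} Proposition~9.6 rather than only the dimension-vector uniqueness. (It is worth noting that in the paper's later use of this lemma only the case $\langle h_j,\gamma\rangle<0$ of part~(1) is actually invoked, but the lemma as stated claims the full $\Gamma^j$ and your proof does not deliver it.)
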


For any $\gamma\in\Gamma$, define an operator $D_{\gamma}:=\dim\Hom_{\Lambda}(N(\gamma),-)$. It is clear that for any $\nu\in\mathbb{N}^{n}$, $D_{\gamma}$ is a constructible function on $\Lambda(\nu)$. Thus for each irreducible component of $\Lambda(\nu)$, it takes a constant value on a dense open subset. For each $Z\in\Irr\Lambda(\nu)$, denote by $D_{\gamma}(Z)$ the generic value on $Z$. The following theorem tells us how to construct MV polytopes using representations of preprojective algebras.

\begin{thm}[\cite{BK}]\label{thm BK}
(i). For any $\gamma\in\Gamma$ and $Z\in\Irr\Lambda(\nu)$, set $M_{\gamma}=-D_{\gamma}(Z)$. Then $M_{\bullet}$ satisfies the edge inequalities and tropical Pl\"{u}cker relations. Hence $P(M_{\bullet})$ is an MV polytope.

(ii). The map $\Pol:\mathcal{B}\rightarrow\mathcal{MV}$ defined by $Z\mapsto P((-D_{\gamma}(Z))_{\gamma\in\Gamma})$ is a crystal isomorphism.
\end{thm}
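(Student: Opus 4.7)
My plan is to prove the two parts in parallel, using the Crawley-Boevey formula (Lemma \ref{lem Crawley-Boevey formula}) together with the short exact sequence of Lemma \ref{lem property of N_gamma}(2) as the core homological tools. For part (i), both the edge inequalities and the tropical Pl\"{u}cker relations should reduce to applying $\Hom_\Lambda(-,X)$ to suitable complexes built from the modules $N(\gamma)$, taking a generic $X$ in $Z$, and reading off the surviving $\Ext^1$ contribution. For part (ii), I would identify $\Pol$ with the composite of Kashiwara-Saito's isomorphism $\mathbf{B}\cong\mathcal{B}$ (Theorem \ref{thm KS crystal iso}) and Kamnitzer's bijection (Theorem \ref{thm Kamnitzer 1}), by showing that both assign the same $\bfi$-Lusztig datum to any $Z\in\mathcal{B}$.

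For the edge inequality at $(w,i)$, I would expand each $D_\gamma(X)$ via Crawley-Boevey and invoke the dimension-vector identity among $\dimv N(w\varpi_i)$, $\dimv N(ws_i\varpi_i)$ and the $\dimv N(w\varpi_j)$ for $j$ adjacent to $i$; after cancellation the combination in \eqref{equ edge inequality} should collapse to a non-negative sum of $\dim\Ext^1$ terms, which gives the inequality. For the tropical Pl\"{u}cker relation at $(w,i,j)$ with $c_{ij}=c_{ji}=-1$, I expect a pair of complexes of $\Lambda$-modules supported on the type-$A_2$ rank-$2$ subsystem spanned by $\alpha_i,\alpha_j$ whose generic behavior on $Z$ exhibits a dichotomy: for generic $X$, exactly one of two natural morphisms among the relevant $N(\gamma)$'s is surjective. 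The ``correct'' side gives a short exact sequence that, via $-\dim\Hom(-,X)$, converts into an equality of $M_\gamma$'s; the ``wrong'' side contributes a strictly positive $\Ext^1$, which is exactly the non-minimal branch of \eqref{equ tropical Plucker relation}.

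For part (ii), once (i) is known, $\Pol$ is a well-defined map into $\mathcal{MV}$, and I would check that it preserves $\bfi$-Lusztig data for every reduced word $\bfi=(i_1,\ldots,i_r)$ of $w_0$. Telescoping the short exact sequence of Lemma \ref{lem property of N_gamma}(2) along the path indexed by $w_0^{\bfi},w_1^{\bfi},\ldots,w_r^{\bfi}=w_0$, the $k$-th $\bfi$-Lusztig coordinate of $P((-D_\gamma(Z))_\gamma)$ should simplify to the generic dimension of $\tp_{i_k}$ of a canonical subquotient of $X$, which matches how the Lusztig datum of $Z\in\mathcal{B}$ is produced through Theorem \ref{thm KS crystal iso}. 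Combined with Theorem \ref{thm Kamnitzer 1}, this forces $\Pol$ to coincide with the standard crystal isomorphism. The main obstacle will be the tropical Pl\"{u}cker relation: constructing the two candidate rank-$2$ short exact sequences and proving the generic dichotomy requires an explicit understanding of the morphisms among the $N(\gamma)$'s on the type-$A_2$ subquiver, which is genuine new input beyond formal homological algebra; once that is in hand, the remaining arguments reduce to bookkeeping with Crawley-Boevey.
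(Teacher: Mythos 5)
The statement you are asked to prove is Theorem~\ref{thm BK}, which the paper does \emph{not} prove: it is stated with the attribution \textup{[\cite{BK}]} and used as a black box. There is therefore no ``paper's own proof'' to compare your proposal against; the authors import the result from Baumann--Kamnitzer and build their new arguments (Lemmas~\ref{lem express c_j}--\ref{lem geq 2}, etc.)\ on top of it.

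Judged on its own merits against what the cited source actually does, your sketch captures part of the strategy but seriously underestimates the hardest step. Your treatment of the edge inequalities via Crawley--Boevey together with the short exact sequence of Lemma~\ref{lem property of N_gamma}(2) is plausible and roughly matches the Baumann--Kamnitzer argument, as does your plan for (ii) of reducing to a comparison of $\bfi$-Lusztig data and invoking Theorems~\ref{thm Kamnitzer 1} and~\ref{thm KS crystal iso}. However, the tropical Pl\"{u}cker relations are not a ``bookkeeping'' consequence of a clean generic dichotomy between two rank-$2$ short exact sequences. In [BK] this is the technical heart of the paper: it requires reflection functors, a careful study of the modules $N(\gamma)$ restricted to the rank-$2$ subalgebra, a rigidity/genericity analysis that does not reduce to ``one of two natural maps is generically surjective,'' and a separate treatment of the case $c_{ij}=0$. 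Your proposal names the right ingredients but asserts the dichotomy as an expectation without supplying the mechanism that would make it true generically on an arbitrary irreducible component $Z$, and without handling the interplay between the two tropical branches when equality holds on both sides of the $\min$. As a blind reconstruction this is a reasonable outline of \emph{where} the difficulty lies, but the central step remains a gap rather than a proof.
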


\section{Proofs of the results: The simply-laced case}\label{sec proof simply-laced}
In this section we assume that $\mathfrak{g}$ is of simply-laced type. The following notations will be fixed throughout this section: Let $P=P(M_{\bullet})$ be an MV polytope and $P'=\tildef_{j}(P(M_{\bullet}))=P(M_{\bullet}')$ for $j\in I$. Let $Z=\Pol^{-1}(P)$. Then $Z'=\tildef_{j}(Z)=\Pol^{-1}(\tildef_{j}(P))$.

\subsection{The Proof of Theorem \ref{thm main simply-laced case} in the simply-laced case}\label{subsec pf of thm 1 sc}
By the definition of $\tildef_{j}$ on $\mathcal{B}$, there exist general points $T$ in $Z$ and $T'$ in $Z'$ with the following short exact sequence:
$$0\rightarrow T\rightarrow T'\rightarrow S_{j}\rightarrow 0.$$

Recall that $c_{j}(P)=M_{\varpi_{j}}-M_{s_{j}\varpi_{j}}-1$. We first show that $c_{j}(P)$ is closely related to the map $\varphi_{j}$ in the crystal structure:
\begin{lem}\label{lem express c_j}
For any $j\in I$, we have $c_{j}(P)=\varphi_{j}(Z)-1$.
\end{lem}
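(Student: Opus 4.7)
The plan is to translate both sides of the identity into representation-theoretic data for a generic $T \in Z$, then exploit the short exact sequence from Lemma \ref{lem property of N_gamma} together with the self-injectivity of $\Lambda$. By Theorem \ref{thm BK}, $M_\gamma = -D_\gamma(Z)$ for every chamber weight $\gamma$; hence $c_j(P) = D_{s_j\varpi_j}(Z) - D_{\varpi_j}(Z) - 1$, and the lemma becomes equivalent to the identity
$$\varphi_j(Z) = D_{s_j\varpi_j}(Z) - D_{\varpi_j}(Z).$$
For a generic $T \in Z$ we may replace each term by its representation-theoretic avatar: $D_\gamma(Z) = \dim\Hom_\Lambda(N(\gamma),T)$, $\varepsilon_j(Z) = \dim\tp_j T$, and $\wt(Z) = -\dimv T$. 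Since $\langle h_j,\varpi_j\rangle = 1$, Lemma \ref{lem property of N_gamma}(2) provides the short exact sequence
$$0 \to N(s_j\varpi_j) \to N(\varpi_j) \to S_j \to 0,$$
to which I would apply $\Hom_\Lambda(-,T)$.

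The crux of the argument, and where I expect the main difficulty to sit, is showing that the resulting long exact sequence truncates after the $\Ext^1$ terms, i.e., that $\Ext^1_\Lambda(N(\varpi_j),T) = 0$. I would establish this by identifying $N(\varpi_j)$ with the injective hull $I_{j^{*}}$, where $j^{*}$ is determined by $w_0(\alpha_{j^{*}}) = -\alpha_j$: writing $\varpi_j = -w_0\varpi_{j^{*}}$ and unwinding the construction of $N(-w_0\varpi_{j^{*}})$ as an iterated socle of $I_{j^{*}}$, its prescribed dimension vector $\varpi_{j^{*}} - w_0\varpi_{j^{*}}$ coincides with $\dimv I_{j^{*}}$, forcing $N(\varpi_j) = I_{j^{*}}$. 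Self-injectivity of the preprojective algebra $\Lambda$ (standard for $\mathfrak{g}$ of Dynkin type) then upgrades the injective $I_{j^{*}}$ to a projective, which kills the offending $\Ext^1$.

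Granting the vanishing, the long exact sequence collapses to a four-term exact sequence, from which
$$D_{s_j\varpi_j}(Z) - D_{\varpi_j}(Z) = \dim\Ext^1_\Lambda(S_j,T) - \dim\Hom_\Lambda(S_j,T).$$
A single application of Crawley-Boevey's formula (Lemma \ref{lem Crawley-Boevey formula}) to the pair $(S_j,T)$ rewrites the right-hand side as $\dim\Hom_\Lambda(T,S_j) - (\alpha_j,\dimv T) = \dim\tp_j T + \langle h_j,\wt(Z)\rangle$, which is exactly $\varepsilon_j(Z) + \langle h_j,\wt(Z)\rangle = \varphi_j(Z)$. This closes the proof.
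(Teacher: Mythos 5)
Your proof is essentially the same as the paper's: both translate $c_j(P)$ into $D_{s_j\varpi_j}(Z) - D_{\varpi_j}(Z) - 1$, apply $\Hom_\Lambda(-,T)$ to the short exact sequence $0 \to N(s_j\varpi_j) \to N(\varpi_j) \to S_j \to 0$, truncate the long exact sequence by the vanishing of $\Ext^1_\Lambda(N(\varpi_j),T)$, and finish with a single application of Crawley-Boevey's formula to $(S_j,T)$; the arithmetic you describe matches the paper's exactly. The one place you diverge is the justification of the $\Ext^1$-vanishing. The paper simply invokes \cite{BK} Proposition 3.6, which gives $N(\varpi_j) = P_j$, so projectivity is immediate. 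You instead aim to show $N(\varpi_j) = I_{j^*}$ and then use self-injectivity of $\Lambda$. This is a legitimate route, but the step you gloss over — that $\dimv I_{j^*}$ equals $\varpi_{j^*} - w_0\varpi_{j^*} = \varpi_{j^*} + \varpi_j$, forcing the submodule $N(\varpi_j) \subseteq I_{j^*}$ to be all of $I_{j^*}$ — is itself a nontrivial structural fact about the Dynkin preprojective algebra, essentially equivalent (modulo self-injectivity) to the cited $N(\varpi_j) = P_j$. So you haven't found a genuinely more elementary path; you've rederived the same input from a slightly different angle and should either cite the dimension-vector formula for $I_{j^*}$ or just cite \cite{BK} as the paper does. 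Everything else is fine.
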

\begin{proof}
Since $N(\varpi_{j})=P_{j}$ (see \cite{BK}, Proposition 3.6), we have
\begin{equation}\label{equ 1}
D_{\varpi_{j}}(Z)=\dim\Hom_{\Lambda}(P_{j},T)=\dim T_{j}.
\end{equation}

By Lemma \ref{lem property of N_gamma}, we have the following short exact sequence
$$0\rightarrow N(s_{j}\varpi_{j})\rightarrow N(\varpi_{j})\rightarrow S_{j}\rightarrow 0.$$
Applying $\Hom_{\Lambda}(-,T)$ to the sequence, we have the following long exact sequence
$$0\rightarrow \Hom_{\Lambda}(S_{j},T) \rightarrow \Hom_{\Lambda}(P_{j},T) \rightarrow \Hom_{\Lambda}(N(s_{j}\varpi_{j}),T)\rightarrow \Ext_{\Lambda}^{1}(S_{j},T)\rightarrow 0.$$

Comparing dimensions of modules in the above sequence, we deduce that
$$D_{s_{j}\varpi_{j}}(Z)=\dim\Ext_{\Lambda}^{1}(S_{j},T)+\dim\Hom_{\Lambda}(P_{j},T)-\dim\Hom_{\Lambda}(S_{j},T).$$

Now using (\ref{equ Crawley-Boevey formula}), we have
\begin{equation}\label{equ 2}
\begin{split}
D_{s_{j}\varpi_{j}}(Z) & =\dim\Hom_{\Lambda}(T,S_{j})-(\dimv S_{j},\dimv T)+\dim T_{j}\\
                       & =\varepsilon_{j}(Z)+\langle h_{j},\wt(Z)\rangle+\dim T_{j}
\end{split}
\end{equation}

Using (\ref{equ 1}) and (\ref{equ 2}), we can deduce that
\begin{equation*}
\begin{split}
c_{j}(P)&=M_{\varpi_{j}}-M_{s_{j}\varpi_{j}}-1=-D_{\varpi_{j}}(Z)+D_{s_{j}\varpi_{j}}(Z)-1\\
        &=\varepsilon_{j}(Z)+\langle h_{j},\wt(Z)\rangle-1=\varphi_{j}(Z)-1.
\end{split}
\end{equation*}
\end{proof}

\begin{lem}\label{lem geq 1}
For any $\gamma\in\Gamma$, we have $D_{\gamma}(Z')\geq D_{\gamma}(Z)$.
\end{lem}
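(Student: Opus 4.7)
The plan is to deduce this inequality directly from the short exact sequence
\[
0 \to T \to T' \to S_j \to 0
\]
just introduced, by applying the left-exact functor $\Hom_{\Lambda}(N(\gamma),-)$. This yields the long exact sequence
\[
0 \to \Hom_{\Lambda}(N(\gamma),T) \to \Hom_{\Lambda}(N(\gamma),T') \to \Hom_{\Lambda}(N(\gamma),S_j) \to \Ext^{1}_{\Lambda}(N(\gamma),T) \to \cdots,
\]
and in particular an injection of the first two terms. Taking dimensions gives
\[
\dim\Hom_{\Lambda}(N(\gamma),T) \;\le\; \dim\Hom_{\Lambda}(N(\gamma),T').
\]

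Next I would translate these two dimensions into the generic invariants $D_{\gamma}(Z)$ and $D_{\gamma}(Z')$. The function $X \mapsto \dim\Hom_{\Lambda}(N(\gamma),X)$ is upper semicontinuous on the affine variety of $\Lambda$-modules of a fixed dimension vector, so it takes its generic value on a dense open subset of each irreducible component. Thus, provided $T$ lies in the (dense open) locus where $\dim\Hom_{\Lambda}(N(\gamma),-)$ achieves $D_{\gamma}(Z)$ on $Z$, and $T'$ lies in the analogous locus on $Z'$, both Hom-dimensions equal the generic values and we conclude
\[
D_{\gamma}(Z) \;\le\; D_{\gamma}(Z').
\]

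The one point requiring care is that the pair $(T,T')$ must be chosen simultaneously generic in $Z$ and in $Z'$. This follows from the geometric construction of $\tildef_j$ via the diagram $\Lambda(\nu)_{j,0} \leftarrow \Omega(\nu,j,c) \to \Lambda(\nu+c\alpha_j)_{j,c}$ recalled in Section \ref{subsec geometric crystals}: the locus of triples $(T,T',\iota)$ with $\iota\colon T \hookrightarrow T'$ and $\coker\iota \simeq S_j$ is parametrized by a smooth variety that projects dominantly onto both $Z$ and $Z'$, so the preimages of the two generic opens intersect non-trivially. This is the only step with any content; the rest of the proof is a one-line appeal to left-exactness. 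I would state this last point briefly and then give the two-line argument above as the body of the proof.
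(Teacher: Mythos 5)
Your proposal is correct and takes the same route as the paper: apply the left-exact functor $\Hom_{\Lambda}(N(\gamma),-)$ to the short exact sequence $0\to T\to T'\to S_j\to 0$ to get an injection $\Hom_{\Lambda}(N(\gamma),T)\hookrightarrow\Hom_{\Lambda}(N(\gamma),T')$, then compare dimensions. The paper's proof of this lemma is exactly this one line; the existence of a \emph{simultaneously} generic pair $(T,T')$ fitting into that sequence is asserted by the paper in the paragraph preceding the lemma (as a consequence of the geometric definition of $\tilde f_j$ via the correspondence $\Lambda(\nu)_{j,0}\leftarrow\Omega(\nu,j,c)\to\Lambda(\nu+c\alpha_j)_{j,c}$), rather than re-derived inside the proof, so your extra paragraph is supplying a justification the paper treats as part of the standing setup. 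One small remark: you actually need less than simultaneous genericity here — it suffices to take $T'$ generic in $Z'$ and $T$ any submodule in $Z$ with $T'/T\simeq S_j$, since upper semicontinuity already gives $D_\gamma(Z)\le\dim\Hom_\Lambda(N(\gamma),T)$ for \emph{every} $T\in Z$ — though the stronger simultaneous genericity is what the paper later uses in Lemma \ref{lem geq 2}, so asserting it here does no harm.
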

\begin{proof}
Since we have the exact sequence
$$0\rightarrow T\rightarrow T'\rightarrow S_{j}\rightarrow 0,$$
we know that $\Hom_{\Lambda}(N(\gamma),T)\hookrightarrow\Hom_{\Lambda}(N(\gamma),T')$.
\end{proof}

\begin{lem}\label{lem geq 2}
For $\gamma\in\Gamma_{j}$, we have $D_{\gamma}(Z')\geq D_{s_{j}\gamma}(Z)-c_{j}(P)\langle h_{j}, \gamma\rangle$.
\end{lem}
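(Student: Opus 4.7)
The plan is to apply $\Hom_{\Lambda}(-,T')$ to the short exact sequence from Lemma \ref{lem property of N_gamma}(2), extract the relevant inequality from the resulting long exact sequence, and then identify each term using Crawley-Boevey together with the crystal axiom for $\widetilde{f}_{j}$.

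Write $n=\langle h_{j},\gamma\rangle$. Since $\gamma\in\Gamma_{j}$, Lemma \ref{lem property of N_gamma}(2) yields
\[
0\to N(s_{j}\gamma)\to N(\gamma)\to S_{j}^{\oplus n}\to 0.
\]
Applying $\Hom_{\Lambda}(-,T')$ gives a long exact sequence whose first four terms are
\[
0\to\Hom(S_{j}^{\oplus n},T')\to\Hom(N(\gamma),T')\to\Hom(N(s_{j}\gamma),T')\to\Ext^{1}(S_{j}^{\oplus n},T').
\]
From the alternating-dimension inequality for this length-four exact sequence I obtain
\[
\dim\Hom(N(\gamma),T')\;\geq\;\dim\Hom(N(s_{j}\gamma),T')\;-\;n\bigl(\dim\Ext^{1}(S_{j},T')-\dim\Hom(S_{j},T')\bigr).
\]

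Next I would identify the two dimensions on the right. Since $\gamma\in\Gamma_{j}$ forces $\langle h_{j},s_{j}\gamma\rangle<0$, Lemma \ref{lem Gamma_j Gamma^j} gives $s_{j}\gamma\in\Gamma^{j}$, and Lemma \ref{lem property of N_gamma}(1) then says $N(s_{j}\gamma)$ has trivial $j$-top, i.e.\ $\Hom(N(s_{j}\gamma),S_{j})=0$. Applying $\Hom(N(s_{j}\gamma),-)$ to the defining sequence $0\to T\to T'\to S_{j}\to 0$ therefore yields $\Hom(N(s_{j}\gamma),T)\cong\Hom(N(s_{j}\gamma),T')$, so that $\dim\Hom(N(s_{j}\gamma),T')=D_{s_{j}\gamma}(Z)$.

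For the $\Ext^{1}$ term, I would use Crawley-Boevey's formula (\ref{equ Crawley-Boevey formula}) with $X=S_{j}$, $Y=T'$: it gives
\[
\dim\Ext^{1}(S_{j},T')-\dim\Hom(S_{j},T')=\dim\Hom(T',S_{j})-(\alpha_{j},\dimv T').
\]
Since $T'$ is generic in $Z'$, $\dim\Hom(T',S_{j})=\dim\tp_{j}T'=\varepsilon_{j}(Z')$, and $(\alpha_{j},\dimv T')=\langle h_{j},\dimv T'\rangle=-\langle h_{j},\wt(Z')\rangle$. Thus the right-hand side equals $\varepsilon_{j}(Z')+\langle h_{j},\wt(Z')\rangle=\varphi_{j}(Z')$. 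The crystal axiom $\varphi_{j}(\widetilde{f}_{j}Z)=\varphi_{j}(Z)-1$ combined with Lemma \ref{lem express c_j} gives $\varphi_{j}(Z')=\varphi_{j}(Z)-1=c_{j}(P)$.

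Substituting these identifications into the inequality yields
\[
D_{\gamma}(Z')=\dim\Hom(N(\gamma),T')\geq D_{s_{j}\gamma}(Z)-n\,c_{j}(P)=D_{s_{j}\gamma}(Z)-c_{j}(P)\langle h_{j},\gamma\rangle,
\]
as required. There is no real obstacle in this argument; the only conceptual point is recognising that the two separate pieces (the vanishing of $\Hom(N(s_{j}\gamma),S_{j})$ on one hand, and the Crawley-Boevey-plus-crystal-axiom computation of the correction term on the other) combine to produce exactly the bound $c_{j}(P)\langle h_{j},\gamma\rangle$.
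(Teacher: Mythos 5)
Your proposal is correct and follows essentially the same route as the paper: apply $\Hom_{\Lambda}(-,T')$ to the short exact sequence from Lemma \ref{lem property of N_gamma}, extract the alternating-dimension inequality, identify the correction term via Crawley-Boevey and the crystal data, and invoke Lemma \ref{lem express c_j}. If anything your argument is slightly cleaner in one spot: you justify $\dim\Hom(N(s_{j}\gamma),T')=D_{s_{j}\gamma}(Z)$ directly from $s_{j}\gamma\in\Gamma^{j}$ and the vanishing of $\Hom(N(s_{j}\gamma),S_{j})$, whereas the paper appeals to a ``Lemma 5.2 (b)'' that has no part (b), and you package the numerical bookkeeping via $\varphi_{j}(Z')=\varphi_{j}(Z)-1$ rather than via $\varepsilon_{j}(Z')=\varepsilon_{j}(Z)+1$ and $\wt(Z)$; both are cosmetic variants of the same computation.
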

\begin{proof}
By Lemma \ref{lem property of N_gamma} we have the following short exact sequence:
\begin{equation}\label{equ 5}
0\rightarrow N(s_{j}\gamma)\rightarrow N(\gamma)\rightarrow S_{j}^{\oplus \langle h_{j},\gamma\rangle}\rightarrow 0.
\end{equation}

Applying $\Hom_{\Lambda}(-,T')$ to (\ref{equ 5}), we have the following long exact sequence
\begin{equation}\label{equ 3}
\begin{split}
0 \rightarrow \Hom_{\Lambda}(S_{j}^{\oplus \langle h_{j},\gamma\rangle},T')&\rightarrow \Hom_{\Lambda}(N(\gamma),T') \\
&\rightarrow \Hom_{\Lambda}(N(s_{j}\gamma),T')\rightarrow \Ext_{\Lambda}^{1}(S_{j}^{\oplus \langle h_{j},\gamma\rangle},T').
\end{split}
\end{equation}

Hence the following inequality holds:
\begin{equation*}
\begin{split}
\dim\Hom_{\Lambda}(N(\gamma),T')+&\dim\Ext_{\Lambda}^{1}(S_{j}^{\oplus \langle h_{j},\gamma\rangle},T')\geq\\
&\dim\Hom_{\Lambda}(S_{j}^{\oplus \langle h_{j},\gamma\rangle},T')+\dim\Hom_{\Lambda}(N(s_{j}\gamma),T').
\end{split}
\end{equation*}

Using (\ref{equ Crawley-Boevey formula}), we deduce that
\begin{equation*}
D_{\gamma}(Z')\geq D_{s_{j}\gamma}(Z')+\langle h_{j},\gamma\rangle(\dimv S_{j},\dimv T')-\dim\Hom_{\Lambda}(T',S_{j}^{\oplus \langle h_{j},\gamma\rangle}).
\end{equation*}

By Lemma \ref{lem geq 1} (b), we have $D_{s_{j}\gamma}(Z)=D_{s_{j}\gamma}(Z')$. It is clear that $\dimv T'=\dimv T+\alpha_{j}$, $\dim\tp_{j}(T')=\varepsilon_{j}(Z')=\varepsilon_{j}(Z)+1$. We then deduce that
\begin{equation*}
\begin{split}
D_{\gamma}(Z') &\geq D_{s_{j}\gamma}(Z)-(\langle h_{j},\wt(Z)-\alpha_{j}\rangle+\dim\tp_{j}(T'))\langle h_{j},\gamma\rangle\\
              &=D_{s_{j}\gamma}(Z)-(\langle h_{j},\wt(Z)\rangle-2+\varepsilon_{j}(Z)+1)\langle h_{j},\gamma\rangle\\
              &=D_{s_{j}\gamma}(Z)-(\varphi_{j}(Z)-1)\langle h_{j},\gamma\rangle.
\end{split}
\end{equation*}

By Lemma \ref{lem express c_j}, we have $D_{\gamma}(Z')\geq D_{s_{j}\gamma}(Z)-c_{j}(P)\langle h_{j},\gamma\rangle$.
\end{proof}

The following corollary follows directly from the proof above.
\begin{cor}\label{cor equ holds}
$D_{\gamma}(Z')=D_{s_{j}\gamma}(Z)-c_{j}(P)\langle h_{j},\gamma\rangle$ if and only if the last map in the sequence (\ref{equ 3}) is surjective.
\end{cor}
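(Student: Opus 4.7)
The plan is short: the corollary is essentially a bookkeeping statement extracted from the proof of Lemma \ref{lem geq 2}. The strategy is to isolate the \emph{only} inequality in that proof and characterize when it is sharp, all subsequent manipulations being exact identities.

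First, I would analyze the four-term exact sequence (\ref{equ 3}). Writing $c = \langle h_j,\gamma\rangle$ and denoting by $\phi$ the final map
$\Hom_\Lambda(N(s_j\gamma),T') \to \Ext_\Lambda^1(S_j^{\oplus c},T')$,
exactness of (\ref{equ 3}) gives the identity
\begin{equation*}
\dim\Hom_\Lambda(N(\gamma),T') - \dim\Hom_\Lambda(N(s_j\gamma),T') = \dim\Hom_\Lambda(S_j^{\oplus c},T') - \dim\operatorname{im}(\phi).
\end{equation*}
Replacing $\dim\operatorname{im}(\phi)$ by its trivial upper bound $\dim\Ext_\Lambda^1(S_j^{\oplus c},T')$ recovers exactly the inequality displayed in the proof of Lemma \ref{lem geq 2}, and it is sharp precisely when $\phi$ is surjective.

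Second, I would verify that every remaining step in the derivation of the bound $D_\gamma(Z')\geq D_{s_j\gamma}(Z) - c_j(P)\langle h_j,\gamma\rangle$ is an equality, not merely an inequality. These are: the Crawley-Boevey formula (\ref{equ Crawley-Boevey formula}) applied to $(S_j^{\oplus c}, T')$; the identification $\dim\Hom_\Lambda(T',S_j^{\oplus c}) = c\cdot \dim\tp_j T' = c\,\varepsilon_j(Z')$; the identities $\dimv T' = \dimv T + \alpha_j$ and $\varepsilon_j(Z') = \varepsilon_j(Z)+1$ coming from the defining exact sequence $0 \to T \to T' \to S_j \to 0$ of $\tildef_j$; the equality $D_{s_j\gamma}(Z') = D_{s_j\gamma}(Z)$, which holds because $s_j\gamma\in\Gamma^j$ and therefore $M'_{s_j\gamma}=M_{s_j\gamma}$ by Theorem \ref{thm Kamnitzer}; and Lemma \ref{lem express c_j} identifying $c_j(P) = \varphi_j(Z)-1$.

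Chaining these equalities with the single inequality isolated in the first step shows that the terminal inequality of Lemma \ref{lem geq 2} is in fact an equality if and only if $\phi$ is surjective, which is the content of the corollary. There is no genuine obstacle here: the only thing one must be careful about is to track that no other step of the proof introduces slack, in particular that the invariance $D_{s_j\gamma}(Z')=D_{s_j\gamma}(Z)$ is an honest equality (guaranteed by $s_j\gamma\in\Gamma^j$). Hence the statement follows directly.
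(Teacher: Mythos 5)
Your proof is correct and matches the argument the paper intends (the paper states the corollary "follows straightforward from the proof above" without elaboration): you isolate the single inequality in the proof of Lemma \ref{lem geq 2} coming from replacing $\dim\operatorname{im}(\phi)$ by $\dim\Ext^1_\Lambda(S_j^{\oplus c},T')$, and verify every other step is an equality. The only cosmetic difference is that you justify $D_{s_j\gamma}(Z')=D_{s_j\gamma}(Z)$ via $s_j\gamma\in\Gamma^j$ and Theorem \ref{thm Kamnitzer}, whereas the paper cites "Lemma \ref{lem geq 1} (b)" (apparently a typo) and more likely intends the direct argument that $N(s_j\gamma)$ has trivial $j$-top (Lemma \ref{lem property of N_gamma}(1)), so $\Hom_\Lambda(N(s_j\gamma),S_j)=0$ and the inclusion $T\hookrightarrow T'$ induces an isomorphism on $\Hom_\Lambda(N(s_j\gamma),-)$; both routes are valid.
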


Combining Lemma \ref{lem geq 1} and Lemma \ref{lem geq 2}, we know that for any $\gamma\in\Gamma_{j}$,
$$D_{\gamma}(Z')\geq \max\{D_{\gamma}(Z),D_{s_{j}\gamma}(Z)-c_{j}(P)\langle h_{j}, \gamma\rangle\}.$$
Recall that $M_{\gamma}=-D_{\gamma}(Z)$. The above inequality is
$$M'_{\gamma}\leq \min\{M_{\gamma},M_{s_{j}\gamma}+c_{j}(P)\langle h_{j}, \gamma\rangle\}.$$
Hence the condition (c) in Lemma \ref{lem AM op in M} is satisfied for $P(M'_{\bullet})=\tildef_{j}(P)$. By Remark \ref{rem enough to prove main}, we have completed the proof of Theorem \ref{thm main simply-laced case} in the simply-laced case.

\subsection{The proof of Theorem \ref{thm minuscule} in the simply-laced case}\label{subsec pf of thm 2 sc}
Since $\langle h_{j},\gamma\rangle=1$, we know that $\gamma\in\Gamma_{j}$. By Lemma \ref{lem geq 1} and \ref{lem geq 2} we already have
\begin{equation}\label{equ 6}
D_{\gamma}(Z')\geq \max\{D_{\gamma}(Z),D_{s_{j}\gamma}(Z)-c_{j}(P)\}.
\end{equation}
We only need to prove that the equality holds in the above formula.

Applying $\Hom_{\Lambda}(-,T)$ and $\Hom_{\Lambda}(-,T')$ to the sequence (\ref{equ 5}), we have the following commutative diagram whose rows and columns are exact:
\begin{equation*}
\xymatrix{
            & 0 \ar[d]                                                & 0 \ar[d]                                            &  \\
  0  \ar[r] & \Hom_{\Lambda}(S_{j},T) \ar[d] \ar[r]                   & \Hom_{\Lambda}(S_{j},T')  \ar[d]                   &  \\
  0  \ar[r] & \Hom_{\Lambda}(N(\gamma),T) \ar[d] \ar[r]               & \Hom_{\Lambda}(N(\gamma),T')  \ar[d]               &  \\
            & \Hom_{\Lambda}(N(s_{j}\gamma),T) \ar[d]_{\alpha} \ar[r] & \Hom_{\Lambda}(N(s_{j}\gamma),T')  \ar[d]_{\beta}  &  \\
            & \Ext^{1}_{\Lambda}(S_{j},T) \ar[r]                      & \Ext^{1}_{\Lambda}(S_{j},T') \ar[r]                & \Ext^{1}_{\Lambda}(S_{j},S_{j})=0  }
\end{equation*}

If $\beta$ is surjective, we have $D_{\gamma}(Z')=D_{s_{j}\gamma}(Z)-c_{j}(P)$ by Corollary \ref{cor equ holds}.

Now assume that $\beta$ is not surjective, then $\alpha$ is not surjective either. We claim that this implies $D_{\gamma}(Z')=D_{\gamma}(Z)$, namely
$$\dim\Hom_{\Lambda}(N(\gamma),T)=\dim\Hom_{\Lambda}(N(\gamma),T').$$

Suppose on the contrary that $\dim\Hom_{\Lambda}(N(\gamma),T)<\dim\Hom_{\Lambda}(N(\gamma),T')$. Hence there exists $f\in\Hom_{\Lambda}(N(\gamma),T')$ such that the following diagram is commutative:
\begin{equation*}
\xymatrix{
  N(\gamma) \ar[d]_{f} \ar[r]^{p_{1}} & T'/T\simeq S_{j}       \\
  T' \ar[ur]_{p_{2}}                     }
\end{equation*}
Hence there exits $g\in\Hom_{\Lambda}(N(s_{j}\gamma),T)$ such that the following diagram is commutative with exact rows:
\begin{equation*}
\xymatrix{
  0 \ar[r] & N(s_{j}\gamma) \ar[d]_{g} \ar[r] & N(\gamma) \ar[d]_{f} \ar[r]^{p_{1}} & S_{j} \ar[d]_{\id} \ar[r] & 0  \\
  0 \ar[r] & T \ar[r] & T' \ar[r]^{p_{2}} & S_{j} \ar[r] & 0   }
\end{equation*}

Now let us consider any $\Lambda$-module $X$ which is an extension of $T$ by $S_{j}$. By the definition of $\tildef_{j}$ on the geometric crystal $\mathcal{B}$, we know that $X\in\tildef_{j}(Z)$. Since $T'$ is a general point in $\tildef_{j}(Z)$, using the well-known fact that the function $\Hom_{\Lambda}(N(\gamma),-):\tildef_{j}(Z)\rightarrow\mathbb{Z}$ is upper semi-continuous (see for example \cite{CS}), we have
$$\dim\Hom_{\Lambda}(N(\gamma),X)\geq\dim\Hom_{\Lambda}(N(\gamma),T')>\dim\Hom_{\Lambda}(N(\gamma),T).$$

The previous arguments for $T'$ can be applied to $X$. So there exists the following commutative diagram:
\begin{equation*}
\xymatrix{
  0 \ar[r] & N(s_{j}\gamma) \ar[d] \ar[r] & N(\gamma) \ar[d] \ar[r] & S_{j} \ar[d]_{\id} \ar[r] & 0  \\
  0 \ar[r] & T \ar[r] & X \ar[r] & S_{j} \ar[r] & 0   }
\end{equation*}
This implies that the map $\alpha$ is surjective, which contradicts to our assumption.

To summarize, we have proved the fact that $D_{\gamma}(Z')$ equals either $D_{\gamma}(Z)$ or $D_{s_{j}\gamma}(Z)-c_{j}(P)$. Together with (\ref{equ 6}), it completes the proof of Theorem \ref{thm minuscule} in the simply-laced case.

\subsection{The proof of Theorem \ref{thm simply-laced case} in the simply-laced case}\label{subsec pf of thm 3 sc}
We keep the notations in the previous subsections.

\begin{lem}
$\dim\Ext^{1}_{\Lambda}(T,S_{j})=D_{-s_{j}\varpi_{j}}(Z)-D_{\varpi_{j}}(Z)+D_{s_{j}\varpi_{j}}(Z).$
\end{lem}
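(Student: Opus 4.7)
The plan is to leverage almost entirely the machinery already set up in the proof of Lemma \ref{lem express c_j}, together with the symmetry $\dim\Ext^{1}_{\Lambda}(A,B)=\dim\Ext^{1}_{\Lambda}(B,A)$ from Lemma \ref{lem Crawley-Boevey formula}. The only genuinely new ingredient is identifying the module $N(-s_{j}\varpi_{j})$ on the nose: applying the definition with $w=s_{j}$, $i=j$, and the reduced expression of length one, we get
\[
N(-s_{j}\varpi_{j})=\soc_{(j)}(I_{j})=S_{j},
\]
so that $D_{-s_{j}\varpi_{j}}(Z)=\dim\Hom_{\Lambda}(S_{j},T)$.

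Next I would take the short exact sequence
\[
0\rightarrow N(s_{j}\varpi_{j})\rightarrow N(\varpi_{j})\rightarrow S_{j}\rightarrow 0
\]
provided by Lemma \ref{lem property of N_gamma} and apply $\Hom_{\Lambda}(-,T)$. Since $N(\varpi_{j})=P_{j}$ is projective, the resulting long exact sequence degenerates into the four-term sequence already exhibited in the proof of Lemma \ref{lem express c_j}. Counting dimensions yields
\[
D_{s_{j}\varpi_{j}}(Z)=\dim\Hom_{\Lambda}(P_{j},T)-\dim\Hom_{\Lambda}(S_{j},T)+\dim\Ext^{1}_{\Lambda}(S_{j},T),
\]
which, after substituting $\dim\Hom_{\Lambda}(P_{j},T)=D_{\varpi_{j}}(Z)$ and $\dim\Hom_{\Lambda}(S_{j},T)=D_{-s_{j}\varpi_{j}}(Z)$, rearranges to
\[
\dim\Ext^{1}_{\Lambda}(S_{j},T)=D_{-s_{j}\varpi_{j}}(Z)-D_{\varpi_{j}}(Z)+D_{s_{j}\varpi_{j}}(Z).
\]

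Finally, invoking the Ext-symmetry from Lemma \ref{lem Crawley-Boevey formula} to replace $\dim\Ext^{1}_{\Lambda}(S_{j},T)$ by $\dim\Ext^{1}_{\Lambda}(T,S_{j})$ gives the claim. There is no real obstacle: the whole argument is a two-line rearrangement of identities that were essentially derived already; the only thing worth writing out carefully is the identification $N(-s_{j}\varpi_{j})\cong S_{j}$, since this is the step that converts the relevant $\Hom$-dimension into a $D_{\gamma}$-value.
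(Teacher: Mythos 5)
Your proof is correct and uses essentially the same ingredients as the paper: the identification $N(-s_{j}\varpi_{j})\cong S_{j}$, the long exact sequence from Lemma \ref{lem property of N_gamma} applied to $\Hom_{\Lambda}(-,T)$, and Lemma \ref{lem Crawley-Boevey formula}. The only cosmetic difference is that you extract $\dim\Ext^{1}_{\Lambda}(S_{j},T)$ from the long exact sequence and then invoke Ext-symmetry, whereas the paper applies the Crawley-Boevey formula directly to $\dim\Ext^{1}_{\Lambda}(T,S_{j})$ after re-using the two $D_{\gamma}$-formulas established in the proof of Lemma \ref{lem express c_j}; both are a two-line rearrangement of the same identities.
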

\begin{proof}
We already know from the proof of Lemma \ref{lem express c_j} that
\begin{align*}
D_{\varpi_{j}}(Z)&=\dim T_{j},\\
D_{s_{j}\varpi_{j}}(Z)&=\varepsilon_{j}(Z)+\langle h_{j},\wt(Z)\rangle+\dim T_{j}.
\end{align*}

By definition $N(-s_{j}\varpi_{j})=S_{j}$. Hence $D_{-s_{j}\varpi_{j}}(Z)=\dim\Hom_{\Lambda}(S_{j},T)$.

Now using (\ref{equ Crawley-Boevey formula}), we deduce that
\begin{equation*}
\begin{split}
\dim\Ext^{1}_{\Lambda}(T,S_{j})&=\dim\Hom_{\Lambda}(T,S_{j})+\dim\Hom_{\Lambda}(S_{j},T)-(\dimv T,\dimv S_{j})\\
                               &=\varepsilon_{j}(Z)+\dim\Hom_{\Lambda}(S_{j},T)+\langle h_{j},\wt(Z)\rangle\\
                               &=D_{-s_{j}\varpi_{j}}(Z)-D_{\varpi_{j}}(Z)+D_{s_{j}\varpi_{j}}(Z).
\end{split}
\end{equation*}
Hence the proof is completed.
\end{proof}

By the above lemma, we have $m(j,P)=\dim\Ext^{1}_{\Lambda}(T,S_{j})\geq 0$.

Now let $n\geq m(j,P)$ be a positive integer, let $T^{(n)}$ be a general point in $Z^{(n)}=\tildef_{j}^{(n)}(Z)$ and $T^{(n+1)}$ be a general point in $Z^{(n+1)}=\tildef_{j}^{(n+1)}(Z)$ such that the short exact sequence holds:
$$0\rightarrow T^{(n)}\rightarrow T^{(n+1)}\rightarrow S_{j}\rightarrow 0.$$

By Theorem \ref{thm main simply-laced case}, which we have already proved in the simply-laced case, and Remark \ref{rem formula implies AM conj}, we see that (\ref{equ more explicit M'}) implies $AM_{j}(P^{(k)})=\tildef_{j}(P^{(k)})$. Thus to prove the remaining part of Theorem \ref{thm simply-laced case}, we only need to prove:
\begin{equation}\label{equ thm 3 final}
D_{\gamma}(Z^{(n+1)})=D_{s_{j}\gamma}(Z^{(n)})-c_{j}(P^{(n)})\langle h_{j},\gamma\rangle,\ \ \text{for any } \gamma\in\Gamma_{j}.
\end{equation}

Since $n\geq m(j,P)$, we have $\Ext^{1}_{\Lambda}(T^{(n+1)},S_{j})=0$. We then use the same argument as in the proof of Lemma \ref{lem geq 2}. The last map in sequence (\ref{equ 3}) is now just a zero map. By Corollary \ref{cor equ holds}, (\ref{equ thm 3 final}) holds as desired.

\begin{rem}
In the above proof we actually use the fact that along a $j$-string of the crystal $\mathcal{B}$, the action of $\tildef_{j}$ eventually becomes the direct sum with the simple module $S_{j}$. Equivalently, along a $j$-string of the crystal $\mathcal{MV}$, the action of $\tildef_{j}$ eventually becomes the Minkowski sum with the polytope $P(j)$, where $P(j)$ is the MV polytope with vertices $0$ and $h_{j}$. The latter is known to experts but seems not mentioned in the literature.
\end{rem}

\subsection{An example for type $D$}\label{subsec eg of type D}
Let $M''_{\bullet}=(M''_{\gamma})_{\gamma\in\Gamma}$ be given by (\ref{equ M'}). In this subsection we give an example of type $D_{4}$ in which $M'_{\bullet}\neq M''_{\bullet}$ but $\tildef_{j}(P)=AM_{j}(P)$ still holds.

Let $\mathfrak{g}$ be of type $D_{4}$ with Dynkin diagram
$$\xymatrix@-1.4pc{
&1 \ar@{-}[d] \\
&2 \ar@{-}[dl] \ar@{-}[dr]&\\
3 & & 4
}
$$
Let $j=1$ and we choose a particular $\gamma_{0}=-s_{1}s_{2}s_{4}s_{3}s_{2}\varpi_{2}\in\Gamma$. It is easy to check that $\langle\gamma_{0}, h_{1}\rangle=2$.

The modules $N(\gamma_{0})$ and $N(s_{1}\gamma_{0})$ can be visualized as follows:
$$
N(\gamma_{0}) {
\xymatrix@-1.2pc{
         & 1 \ar[d]         &           \\
         & 2 \ar[dl] \ar[d] &           \\
3 \ar[dr]& 4 \ar[d]         & 1 \ar[dl] \\
         & 2                &
}
}\ \
N(s_{1}\gamma_{0}) {
\xymatrix@-1.2pc{
         & 2 \ar[dl] \ar[dr] &           \\
3 \ar[dr]&& 4 \ar[dl]                    \\
         & 2                &
}
}\noindent
$$

The variety $\Lambda(2\alpha_{2})$ contains just one point, namely $T=S_{2}\oplus S_{2}$. So $Z=\{T\}$ is the unique irreducible component. Let $P=P(M_{\bullet})=\Pol(Z)$ and $P'=P(M'_{\bullet})=\tildef_{1}(P)$. Denote by $X$ the unique extension (up to isomorphism) of $S_{1}$ by $S_{2}$. Then $T'=X\oplus S_{2}$ is a general point in $Z'=\tildef_{1}(Z)$. Then we have
\begin{align*}
M_{\gamma_{0}}&=-\dim\Hom_{\Lambda}(N(\gamma_{0}),T)=0,\\
M_{s_{1}\gamma_{0}}&=-\dim\Hom_{\Lambda}(N(s_{1}\gamma_{0}),T)=-2,\\
c_{1}(P)&=\varepsilon_{1}(Z)+\langle h_{1},\wt(Z)\rangle-1=0-\langle h_{1},2\alpha_{2}\rangle-1=1.
\end{align*}
So we have $M_{s_{1}\gamma_{0}}+c_{1}(P)\langle h_{1},\gamma_{0}\rangle=0$. Hence
$$M''_{\gamma_{0}}=\min\{M_{\gamma_{0}},M_{s_{1}\gamma_{0}}+c_{1}(P)\langle h_{1},\gamma_{0}\rangle\}=0.$$
However, $M_{\gamma_{0}}'=-\dim\Hom_{\Lambda}(N(\gamma_{0}),T')=-1\neq M''_{\gamma_{0}}$.

Next we will show that $AM_{j}(P)=\tildef_{j}(P)$.

Let $AM_{j}(P)=P(\tilde{M}_{\bullet})$. By Lemma \ref{lem AM op in M} and Theorem \ref{thm main simply-laced case}, we know that $M'_{\gamma}\leq\tilde{M}_{\gamma}\leq M''_{\gamma}$ for all $\gamma\in\Gamma$. And we need to prove $M'_{\gamma}=\tilde{M}_{\gamma}$.

In fact we have $M'_{\gamma}=\tilde{M}_{\gamma}=M''_{\gamma}$ for all $\gamma\neq\gamma_{0}$. The reason is that $\gamma_{0}$ is the only chamber weight such that $\langle\gamma, h_{1}\rangle=2$. To check this, one can find out all $\gamma\in\Gamma$ by exhausting all the possible modules $N(\gamma)$. Then by Theorem \ref{thm minuscule}, $M_{\gamma}'=M''_{\gamma}$ for all $\gamma\neq\gamma_{0}$.

Note that $M'_{\gamma_{0}}=-1$ and $M''_{\gamma_{0}}=0$. So we have $\tilde{M}_{\gamma_{0}}=M''_{\gamma_{0}}$ or $\tilde{M}_{\gamma_{0}}=M'_{\gamma_{0}}$. Now we claim that $M''_{\bullet}$ does not satisfy the edge inequalities, which implies $\tilde{M}_{\gamma_{0}}=M'_{\gamma_{0}}$ because $\tilde{M}_{\bullet}$ satisfies the edge inequalities.

Let $w=s_{1}s_{2}s_{4}s_{3}s_{2}$. So $\gamma_{0}=-w\varpi_{2}$. Note that $w_{0}\varpi_{k}=-\varpi_{k}$ for all $k\in I$ in case of $D_{4}$. We have $\gamma_{0}=ww_{0}\varpi_{2}$. Using the fact $w_{0}s_{2}=s_{2}w_{0}$, we have $-ws_{2}\varpi_{2}=ww_{0}s_{2}\varpi_{2}$. Therefore, the left hand side of the edge inequality (\ref{equ edge inequality}) applied for $M''_{\bullet}$ with $ww_{0}\in W$ and $i=2$ is the following:
\begin{equation*}
\begin{split}
& M''_{-w\varpi_{2}}+M''_{-ws_{2}\varpi_{2}}+\sum_{k\in\{1,3,4\}}c_{k2}M''_{-w\varpi_{k}}\\
=\ & M''_{-\gamma_{0}}+M''_{-s_{1}s_{2}\varpi_{2}}-M''_{-s_{1}\varpi_{1}}
-M''_{-s_{1}s_{2}s_{3}\varpi_{3}}-M''_{-s_{1}s_{2}s_{4}\varpi_{4}}
\end{split}
\end{equation*}
We already know that $M''_{-\gamma_{0}}=0$. It is easy to see that $M''_{-s_{1}\varpi_{1}}=0$, $M''_{-s_{1}s_{2}\varpi_{2}}=M''_{-s_{1}s_{2}s_{3}\varpi_{3}}=M''_{-s_{1}s_{2}s_{4}\varpi_{4}}=-1$. Hence
$$M''_{-w\varpi_{2}}+M''_{-ws_{2}\varpi_{2}}+\sum_{k\in\{1,3,4\}}c_{k2}M''_{-w\varpi_{k}}=1>0,$$
which verifies that $M''_{\bullet}$ does not satisfy the edge inequalities.

\section{Proofs of the results: The non-simply-laced case}\label{sec proof non-simply-laced}
Relying on the results in the simply-laced case, we will use diagram automorphisms to prove our results in the non-simply-laced case.

\subsection{Diagram automorphisms}\label{subsec diagram automorphisms}
We assume that $\mathfrak{g}$ is a simple Lie algebra of simply-laced type and keep the notations in Section \ref{sec MV polytopes and crystal}. Let $\sigma$ be an automorphism of the Dynkin diagram, i.e. $\sigma:I\rightarrow I$ is a bijection such that $c_{ij}=c_{\sigma(i)\sigma(j)}$. Let $k$ be the order of $\sigma$. It is well known that $\sigma$ induces a Lie algebra automorphism of $\mathfrak{g}$ by assigning the Chevalley generators $e_{i}$, $f_{i}$ and $h_{i}$ to $e_{\sigma(i)}$, $f_{\sigma(i)}$ and $h_{\sigma(i)}$ respectively. It is called a \textit{diagram automorphism} of $\mathfrak{g}$. The Cartan subalgebra $\mathfrak{h}$ is stable under $\sigma$, which induces $\sigma\in\GL(\mathfrak{h}^{\ast})$ by $\langle h,\sigma(\lambda)\rangle=\langle\sigma(h),\lambda\rangle$. Define
$$\mathfrak{g}^{\sigma}:=\{x\in\mathfrak{g}|\sigma(x)=x\},\quad \mathfrak{h}^{\sigma}:=\{h\in\mathfrak{h}|\sigma(h)=h\}.$$
Then it is well-known that $\mathfrak{g}^{\sigma}$ is also a simple Lie algebra (see for example \cite{Kac} Section 8.3). The Cartan subalgebra of $\mathfrak{g}^{\sigma}$ is $\mathfrak{h}^{\sigma}$.
Moreover, $\sigma$ induces a group automorphism of the Weyl group $W$ by $\sigma(s_{i})=s_{\sigma(i)}$. Set $W^{\sigma}:=\{w\in W|\sigma(w)=w\}$. $\mathfrak{h}^{\sigma}$ is stable under the action of $W^{\sigma}$.

\begin{figure}\label{fig diagram automorphism}
$$
\xymatrix@-1.2pc{
&& 1\ar@{-}[rr] \ar@{-->}@/_0.5pc/[dd] && 2\ar@{-}[rr] \ar@{-->}@/_0.5pc/[dd] && \cdots \ar@{-}[rr] && l-1\ar@{-}[drr] \ar@{-->}@/_0.5pc/[dd] && \\
A_{2l-1} (l\geq 2): && && && && && l\\
&& 2l-1 \ar@{-}[rr] \ar@{-->}@/_0.5pc/[uu] && 2l-2 \ar@{-}[rr] \ar@{-->}@/_0.5pc/[uu] && \cdots \ar@{-}[rr] && l+2 \ar@{-}[urr] \ar@{-->}@/_0.5pc/[uu] &&
}
$$

$$
\xymatrix@-1.2pc{
&& && && && && l \ar@/_0.5pc/@{-->}[dd]\\
D_{l+1} (l\geq 3): && 1 \ar@{-}[rr] && 2\ar@{-}[rr] && \cdots\ar@{-}[rr] && l-1\ar@{-}[urr] \ar@{-}[drr] &&\\
&& && && && && l+1 \ar@/_0.5pc/@{-->}[uu]
}
$$

$$
\xymatrix@-1.2pc{
&& &1 \ar@{-}[d] \ar@/_1pc/@{-->}[ddl]\\
D_{4}: && &2 \ar@{-}[dl] \ar@{-}[dr]&\\
&& 3 \ar@/_1pc/@{-->}[rr] & & 4 \ar@/_1pc/@{-->}[uul]
}
$$

$$
\xymatrix@-1.2pc{
&& 1\ar@{-}[rr] \ar@{-->}@/_0.5pc/[dd] && 2\ar@{-}[drr] \ar@{-->}@/_0.5pc/[dd] && && \\
E_{6}: && && && 3 \ar@{-}[rr] && 4 \\
&& 6 \ar@{-}[rr] \ar@{-->}@/_0.5pc/[uu] && 5 \ar@{-}[urr] \ar@{-->}@/_0.5pc/[uu] && &&
}
$$

\caption{Diagram automorphisms of $\mathfrak{g}$ considered in this paper}
\end{figure}
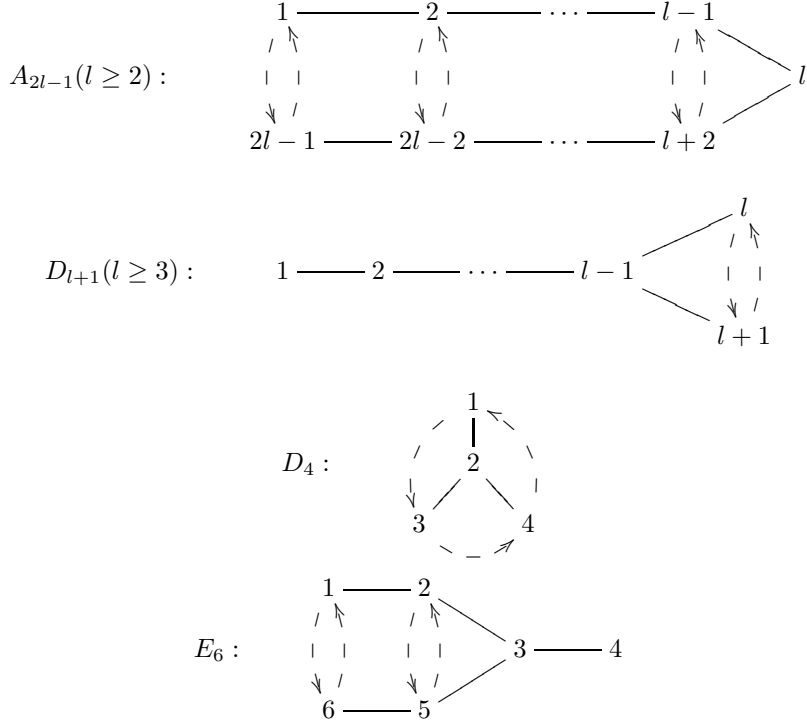

Throughout this section, we will only consider diagram automorphisms $\sigma$ illustrated in Figure \ref{fig diagram automorphism}, where the dashed arrows indicate the action of $\sigma$ on $I$ and vertices not connected with any arrows are invariant under $\sigma$. Then the fixed point subalgebra $\mathfrak{g}^{\sigma}$ is a simple Lie algebra of type $C_{l}$, $B_{l}$, $G_{2}$ and $F_{4}$ respectively. Note that we have covered all non-simply-laced types.

From now on we set $\hatfrakg=\mathfrak{g}^{\sigma}$. Let $\hatC=(\hat{c}_{ij})$ be the Cartan matrix of $\hatfrakg$ with index set $\hatI=\{1,2,\ldots,l\}$. It is convenient to treat $\hatI$ as a subset of $I=\{1,2,\ldots,n\}$. Let $\hatW$ be the Weyl group associated with $\hatfrakg$ with $\hats_{i}$ ($i\in\hatI$) the simple reflections. Let $\hat{\varpi_{i}}$ ($i\in\hatI$) be the fundamental weights for $\hatfrakg$ and set $$\hat{\Gamma}=\{\hatw\hat{\varpi_{i}}|\hatw\in\hatW,i\in\hatI\}.$$

Let $k$ be the order of $\sigma$. From Figure \ref{fig diagram automorphism} we see that $k=3$ if $\mathfrak{g}$ is of type $D_{4}$ and $\sigma$ is given by $\sigma(1)=3,\sigma(3)=4,\sigma(4)=1$, and $k=2$ in all remaining cases.

Let $k_{i}$ be the number of elements in the $\sigma$-orbit of $i\in I$. Again from Figure \ref{fig diagram automorphism} we can see that $k_{i}\in\{1,2,3\}$ for any $i$ and the $\sigma$-orbit of $i\in I$ is isomorphic to $A_{1}\times \cdots\times A_{1}$ ($k_{i}$ copies). It is well known that there exists a group isomorphism $\Theta:\hatW\simeq W^{\sigma}$ such that $\Theta(\hats_{i})=s_{i}^{\sigma}$ for all $i\in\hatI$ (see for example \cite{FRS}), where
\begin{equation*}
s_{i}^{\sigma}=\prod_{t=0}^{k_{i}-1}s_{\sigma^{t}(i)}.
\end{equation*}
Note that the product is independent of the order of $s_{\sigma^{t}(i)}$ for different $t$.

The following basic results will be used later.
\begin{lem}\label{lem basis res non-simply-laced}
(a). The Chevalley generators of $\mathfrak{h}^{\sigma}$ are given by $\hat{h}_{i}=h_{i}^{\sigma}=\sum_{t=0}^{k_{i}-1}h_{\sigma^{t}(i)}$, for any $i\in\hatI$.

(b). The fundamental weight $\hatpi_{i}$ is the restriction of $\varpi_{i}$ to the subspace $\mathfrak{h}^{\sigma}$.

(c). Let $\hatw\in\hatW$ and $w=\Theta(\hatw)\in W^{\sigma}$. For any $i\in\hatI$, we have
$$\hats_{i}\hatw<\hatw \Longleftrightarrow s_{i}^{\sigma}w<w \Longleftrightarrow s_{i}w<w,s_{\sigma(i)}w<w,\ldots,s_{\sigma^{k_{i}-1}(i)}w<w.$$

(d). For any $\hatw\in\hatW$ and $h\in\mathfrak{h}^{\sigma}$, we have $\hatw h=\Theta(\hatw)h$.
\end{lem}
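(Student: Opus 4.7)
The plan is to treat (a)--(d) as a package of standard facts about the folding of a simply-laced simple Lie algebra by a diagram automorphism, all of which can be extracted either from the case-by-case description in Figure \ref{fig diagram automorphism} or from standard references (e.g.\ \cite{Kac}, Chapter 8). I would prove (a) and (b) together, then (c), then (d).

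For (a), observe first that each $\hath_{i}=\sum_{t=0}^{k_{i}-1}h_{\sigma^{t}(i)}$ is manifestly $\sigma$-invariant, and since the $\sigma$-orbits on $I$ are parameterised by $\hatI$ and $\{h_{i}\}_{i\in I}$ is a basis of $\mathfrak{h}$, the collection $\{\hath_{i}\}_{i\in\hatI}$ is a basis of $\mathfrak{h}^{\sigma}$. To identify these as the Chevalley basis of $\hatfrakg$, one only needs to verify that $\langle\hath_{i},\hat{\alpha}_{j}\rangle=\hat{c}_{ij}$ for all $i,j\in\hatI$; this is a short case-by-case check against the four diagrams in Figure \ref{fig diagram automorphism}. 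For (b), compute directly that, for $i,j\in\hatI$,
\begin{equation*}
\langle\hath_{j},\varpi_{i}|_{\mathfrak{h}^{\sigma}}\rangle=\sum_{t=0}^{k_{j}-1}\langle h_{\sigma^{t}(j)},\varpi_{i}\rangle=\sum_{t=0}^{k_{j}-1}\delta_{\sigma^{t}(j),i}=\delta_{ij},
\end{equation*}
where the last equality uses that two distinct representatives of $\sigma$-orbits cannot lie in a common orbit. Combined with (a) this identifies $\varpi_{i}|_{\mathfrak{h}^{\sigma}}$ with $\hatpi_{i}$.

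For (c), I would invoke the standard lift of reduced expressions through folding: given any reduced expression $\hatw=\hats_{i_{1}}\cdots\hats_{i_{r}}$ in $\hatW$, replacing each $\hats_{i_{p}}$ by the commuting product $s_{i_{p}}^{\sigma}=\prod_{t}s_{\sigma^{t}(i_{p})}$ (in any order within the orbit) produces a reduced expression for $w=\Theta(\hatw)$ in $W$ of length $\sum_{p}k_{i_{p}}$; this is the usual length formula for foldings, cf.\ \cite{FRS}. Consequently $\hats_{i}\hatw<\hatw$ if and only if $\hatw$ has a reduced expression ending in $\hats_{i}$, if and only if $w$ has a reduced expression ending in $s_{i}^{\sigma}$, which gives $s_{i}^{\sigma}w<w$. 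Since the reflections $s_{\sigma^{t}(i)}$ for $0\le t<k_{i}$ pairwise commute (their indices are not joined in the Dynkin diagram, as Figure \ref{fig diagram automorphism} shows), ending the reduced expression with the full product $s_{i}^{\sigma}$ is equivalent to being able to end it with each individual $s_{\sigma^{t}(i)}$, which is exactly $s_{\sigma^{t}(i)}w<w$ for every $t$.

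For (d), by induction on $\ell(\hatw)$ using (c) it suffices to establish the claim for $\hatw=\hats_{i}$. For $h\in\mathfrak{h}^{\sigma}$, the $\sigma$-invariance of $h$ forces $\langle h,\alpha_{\sigma^{t}(i)}\rangle=\langle h,\alpha_{i}\rangle$ for all $t$, and therefore
\begin{equation*}
s_{i}^{\sigma}h=h-\sum_{t=0}^{k_{i}-1}\langle h,\alpha_{\sigma^{t}(i)}\rangle h_{\sigma^{t}(i)}=h-\langle h,\alpha_{i}\rangle\hath_{i}\in\mathfrak{h}^{\sigma}.
\end{equation*}
On the other hand the intrinsic action of $\hats_{i}$ on $\hatfrakh=\mathfrak{h}^{\sigma}$ is $\hats_{i}(h)=h-\langle h,\hat{\alpha}_{i}\rangle\hath_{i}$, and the analogue of (b) for roots identifies $\hat{\alpha}_{i}$ with $\alpha_{i}|_{\mathfrak{h}^{\sigma}}$, so the two actions agree. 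The only mildly tedious step in the whole proof is the Cartan matrix matching in (a), but this is a transparent finite computation from Figure \ref{fig diagram automorphism}, so I expect no genuine obstacle.
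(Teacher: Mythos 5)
The paper dispatches this lemma entirely by citation: parts (a) and (b) are declared ``well-known,'' and (c) and (d) are referred to \cite[Remark 2.3.2(3)]{NS}. Your proposal instead supplies a genuine first-principles proof, which is a different and more self-contained route. The arguments you give are correct in substance: (a) reduces to a finite verification of Cartan-matrix pairings against the four diagrams, (b) is a direct pairing computation, (c) rests on the standard lift of reduced expressions through folding (so that $\ell(\Theta(\hatw))=\sum_{p}k_{i_{p}}$) together with commutativity of the $s_{\sigma^{t}(i)}$ within an orbit, and (d) follows by induction on length once the base case $\hatw=\hats_{i}$ is checked, using $\sigma$-invariance of $h$ to collapse the product $s_{i}^{\sigma}$ to a single reflection formula and the identification $\hat{\alpha}_{i}=\alpha_{i}|_{\mathfrak{h}^{\sigma}}$. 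What the paper's citation buys is brevity; what your version buys is a reader who does not need to chase down \cite{NS} to see why the folding isomorphism $\Theta$ behaves as claimed, which is arguably more in keeping with the paper's stated intent of keeping the non-simply-laced reduction transparent.

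One small slip in (c): the characterization $\hats_{i}\hatw<\hatw$ (a \emph{left} descent) corresponds to $\hatw$ admitting a reduced expression \emph{beginning} with $\hats_{i}$, not ending in it; you have written ``ending in'' throughout that paragraph, which is the condition for $\hatw\hats_{i}<\hatw$. The logic is symmetric, so the argument survives with ``starting with'' substituted everywhere, but as written the statement being proved doesn't match the implication chain. Also, your claim that ``being able to start with each individual $s_{\sigma^{t}(i)}$'' implies ``being able to start with the full commuting product $s_{i}^{\sigma}$'' relies on the lifting (exchange) property of Coxeter groups in addition to commutativity; it is worth a one-line flag that this is where the Coxeter combinatorics enters, since commutativity alone only gives the forward direction.
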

\begin{proof}
(a) and (b) are well-known results. (c) and (d) can be found in \cite[Remark 2.3.2(3)]{NS}.
\end{proof}

\subsection{Diagram automorphisms and MV polytopes}\label{subsec diag automorphism and MV}
In this subsection we describe the relations between $\sigma$-invariant MV polytopes for $\mathfrak{g}$ and MV polytopes for $\hat{\mathfrak{g}}$. For details we refer to \cite{H} and \cite{NS}.

A diagram automorphism $\sigma$ induces an action on the set $\mathcal{MV}$ of MV polytopes as follows. Let $P=P(\mu_{\bullet})=P(M_{\bullet})$ be an MV polytope for $\mathfrak{g}$. $\sigma(P):=P(\mu_{\bullet}')$, where $\mu_{w}'=\sigma^{-1}(\mu_{\sigma(w)})$ for $w\in W$. Assuming that the BZ datum of $\sigma(P)$ is $M_{\bullet}'$, we have $M_{\gamma}'=M_{\sigma(\gamma)}$ for any $\gamma\in\Gamma$. We then define
$$\mathcal{MV}^{\sigma}=\{P\in\mathcal{MV}|\sigma(P)=P\}.$$

We know that an MV polytpe $P\in\mathcal{MV}^{\sigma}$ if and only if $\sigma(\mu_{w})=\mu_{\sigma(w)}$ for all $w\in W$, if and only if $M_{\gamma}=M_{\sigma(\gamma)}$ for all $\gamma\in\Gamma$.

Let $\hatMV$ be the set of MV polytopes for $\hatfrakg$. A polytope in $\hatMV$ will be denoted by $\hatP=\hatP(\hatM_{\bullet})=\hatP(\hatmu_{\bullet})$. The Kashiwara operators on $\hatMV$ are denoted by $\hat{f}_{j}$ for any $j\in\hatI$.

Let $P=P(\mu_{\bullet})$ be an MV polytope for $\mathfrak{g}$. Let $\Phi(\mu_{\bullet})$ be a collection of elements $(\hatmu_{\hatw})_{\hatw\in\hatW}$ in $\mathfrak{h}^{\sigma}\cap\mathbb{R}$ given by $\hatmu_{\hatw}=\mu_{\Theta(\hatw)}$. Then we define a map $\Phi:\mathcal{MV}^{\sigma}\rightarrow\hatMV$ as $\Phi(P(\mu_{\bullet}))=\hatP(\Phi(\mu_{\bullet}))$.

For any $i\in\hatI$, we define operators $\tildef_{i}^{\sigma}$ on $\mathcal{MV}$ as
\begin{equation*}
\tildef_{i}^{\sigma}=\prod_{t=0}^{k_{i}-1}\tildef_{\sigma^{t}(i)}.
\end{equation*}
Note that this definition does not depend on the order of $\tildef_{\sigma^{t}(i)}$ for different $t$.

\begin{thm}\label{thm sigma_inv_MV and hat MV}
The map $\Phi:\mathcal{MV}^{\sigma}\rightarrow\hatMV$ is a bijection such that $\Phi\circ\tildef_{j}^{\sigma}=\hat{f}_{j}\circ\Phi$ for all $j\in\hatI$.
\end{thm}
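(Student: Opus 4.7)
The proof proceeds in three steps: show that $\Phi$ is well-defined into $\hatMV$, verify the intertwining identity $\Phi\circ\tildef_i^{\sigma}=\hat{f}_i\circ\Phi$, and conclude bijectivity from this identity together with the connectedness of the crystal $\hatMV$. The chief tool throughout is Lemma \ref{lem basis res non-simply-laced}, which translates the folded Cartan/Weyl combinatorics between $\mathfrak{g}$ and $\hatfrakg$.

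\textbf{Step 1 (well-definedness).} Given $P=P(\mu_{\bullet})\in\mathcal{MV}^{\sigma}$, the point $\hatmu_{\hatw}=\mu_{\Theta(\hatw)}$ lies in $\mathfrak{h}^{\sigma}_{\mathbb{R}}$ because $\Theta(\hatw)\in W^{\sigma}$ and $\sigma(\mu_{w})=\mu_{\sigma(w)}$. Using parts (b) and (d) of Lemma \ref{lem basis res non-simply-laced}, the candidate BZ-datum of $\Phi(P)$ is given by $\hatM_{\hatw\hatpi_{i}}=\langle\hatmu_{\hatw},\hatw\hatpi_{i}\rangle=M_{\Theta(\hatw)\varpi_{i}}$. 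I would obtain the edge inequalities for $\hatM_{\bullet}$ by summing the edge inequalities for $M_{\bullet}$ at $(\Theta(\hatw),\sigma^{t}(i))$ over $t=0,\ldots,k_{i}-1$, invoking part (a) of Lemma \ref{lem basis res non-simply-laced} to identify $\hath_{i}=\sum_{t}h_{\sigma^{t}(i)}$, together with the standard realization of $\hat{c}_{ij}$ as a folded entry of $C$. The tropical Pl\"ucker relations for $\hatM_{\bullet}$ descend by a similar $\sigma$-orbit sum, with $\sigma$-invariance of $M_{\bullet}$ entering essentially when the pair $(i,j)\in\hatI$ corresponds to a non-simply-laced bond of $\hatfrakg$.

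\textbf{Step 2 (intertwining and bijectivity).} The $\sigma$-orbit of $i\in I$ spans an $A_{1}\times\cdots\times A_{1}$ subdiagram of the Dynkin diagram of $\mathfrak{g}$, so the $\tildef_{\sigma^{t}(i)}$ mutually commute, making $\tildef_{i}^{\sigma}$ well-defined and ensuring that it preserves $\mathcal{MV}^{\sigma}$. Writing $P^{\sharp}=\tildef_{i}^{\sigma}(P)$ with BZ datum $M^{\sharp}_{\bullet}$ and iterating Theorem \ref{thm Kamnitzer} over the $k_{i}$ factors yields $M^{\sharp}_{\varpi_{\sigma^{t}(i)}}=M_{\varpi_{\sigma^{t}(i)}}-1$ for each $t$, and $M^{\sharp}_{\gamma}=M_{\gamma}$ whenever $\gamma=w\varpi_{k}$ with $s_{\sigma^{t}(i)}w<w$ for every $t$. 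Lemma \ref{lem basis res non-simply-laced}(c) translates these conditions through $\Phi$ to $\hatM^{\sharp}_{\hatpi_{i}}=\hatM_{\hatpi_{i}}-1$ and $\hatM^{\sharp}_{\hatw\hatpi_{k}}=\hatM_{\hatw\hatpi_{k}}$ for $\hats_{i}\hatw<\hatw$, which by the uniqueness statement in Theorem \ref{thm Kamnitzer} applied to $\hatfrakg$ identifies $\Phi(P^{\sharp})$ with $\hat{f}_{i}(\Phi(P))$. Since $\Phi$ sends the zero polytope to the zero polytope and $\hatMV$ is generated from its highest-weight element by the $\hat{f}_{i}$'s, the intertwining identity forces surjectivity of $\Phi$; injectivity follows because the GGMS datum on $W^{\sigma}\subset W$ together with $\sigma$-invariance and the tropical Pl\"ucker relations recovers the full GGMS datum of a $\sigma$-invariant MV polytope.

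\textbf{Main obstacle.} The principal difficulty lies in Step 1, specifically in verifying the tropical Pl\"ucker relations for $\hatM_{\bullet}$ at non-simply-laced bonds of $\hatfrakg$. The Pl\"ucker relations relevant to those cases were omitted from equation (\ref{equ tropical Plucker relation}), and must be deduced by suitable combinations of the simply-laced relations for $M_{\bullet}$ at several triples within the $\sigma$-orbit, with $\sigma$-invariance used crucially to reduce the number of independent equations.
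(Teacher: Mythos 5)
Your proposal takes a genuinely different route from the paper: the paper simply cites Hong \cite{H} for the bijectivity of $\Phi$ and Naito--Sagaki \cite{NS} for the intertwining with the Kashiwara operators (observing that the arguments in \cite{NS}, written for type $A$, carry over verbatim to types $D$ and $E$), whereas you attempt a direct, self-contained verification. Your three-step architecture (well-definedness, intertwining via Theorem \ref{thm Kamnitzer}, bijectivity from crystal connectedness) is a reasonable plan and the intertwining step in particular looks viable: applying Theorem \ref{thm Kamnitzer} once for each $\tildef_{\sigma^{t}(i)}$ and using Lemma \ref{lem basis res non-simply-laced}(c) to translate the conditions $s_{\sigma^{t}(i)}w<w$ into $\hats_{i}\hatw<\hatw$ is essentially the mechanism underlying the proof in \cite{NS}.

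However, two genuine gaps remain. First, as you yourself flag, your Step 1 does not actually establish that the folded datum $\hatM_{\bullet}$ satisfies the tropical Pl\"{u}cker relations of $\hatfrakg$ at non-simply-laced bonds. Those relations were not even stated in the paper (equation (\ref{equ tropical Plucker relation}) covers only $c_{ij}\in\{0,-1\}$), and deducing them from $\sigma$-invariant simply-laced Pl\"{u}cker relations on several triples inside a $\sigma$-orbit is a non-trivial combinatorial computation; it is precisely the content of \cite{H}, not a routine summation. Without it you have not shown $\Phi$ is well-defined into $\hatMV$. Second, your injectivity argument is not substantiated. Knowing $\mu_{w}$ only for $w\in W^{\sigma}$, together with $\mu_{\sigma(w)}=\sigma(\mu_{w})$, does not manifestly determine $\mu_{w}$ for $w\notin W^{\sigma}$ that lie outside the $\sigma$-orbit of any element of $W^{\sigma}$; recovering the full GGMS datum requires, e.g., a $\sigma$-adapted reduced word argument or a weight-multiplicity count via the twining character, and this is again what \cite{H} provides. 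Your Step 2 surjectivity argument (transport along $\hat{f}_{j}$'s from the zero polytope) is fine once the intertwining is established, but the injectivity direction cannot be disposed of in a single sentence.
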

\begin{proof}
It was proved in \cite{H} that $\Phi$ is a bijection. The fact that $\Phi$ commutes with the Kashiwara operators was proved in \cite{NS} for $\mathfrak{g}$ of type $A$. For type $D$ and $E$ it can be proved in the same way. In fact, one only need to notice that Lemma 2.6.1, Lemma 2.7.2, Proposition 2.7.3 and Proposition 2.7.4 in \cite{NS} all hold in general.
\end{proof}

\subsection{Proof of Theorem \ref{thm main simply-laced case}: The non-simply-laced case}\label{subsec pf of thm 1 nsc}
Let $\hatP=\hatP(\hatM_{\bullet})$ be an MV polytope for $\hatfrakg$. Fix $j\in\hatI$ and let $\hatP'=\hatf_{j}(\hatP)=\hatP(\hatM'_{\bullet})$. Let $P=P(M_{\bullet})=\Phi^{-1}(\hatP)$ and $P'=P(M'_{\bullet})=\Phi^{-1}(\hatP')$. By Theorem \ref{thm sigma_inv_MV and hat MV} we have $P'=\tildef_{j}^{\sigma}(P)$.

\begin{lem}\label{lem M hat}
For any $i\in\hatI$ and $\hatw\in\hatW$, we have $\hatM_{\hatw\hatpi_{i}}=M_{\Theta(\hatw)\varpi_{i}}$.
\end{lem}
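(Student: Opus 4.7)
The plan is to unfold both sides using the definitions of the BZ datum and of $\Phi$, and then invoke Lemma \ref{lem basis res non-simply-laced}. The statement is essentially a bookkeeping identity recording that $\Phi$ intertwines the two labelings of BZ data.

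Recall from Section \ref{subsec MV polytopes} that the value of a BZ datum at a chamber weight is obtained by pairing with the corresponding GGMS vertex:
\begin{equation*}
\hatM_{\hatw\hatpi_{i}} = \langle \hatmu_{\hatw}, \hatw\hatpi_{i}\rangle,
\qquad
M_{\Theta(\hatw)\varpi_{i}} = \langle \mu_{\Theta(\hatw)}, \Theta(\hatw)\varpi_{i}\rangle.
\end{equation*}
The construction of $\Phi$ stated above the lemma gives $\hatmu_{\hatw} = \mu_{\Theta(\hatw)}$. Since $P\in \mathcal{MV}^{\sigma}$ and $\Theta(\hatw)\in W^{\sigma}$, the characterization of $\sigma$-invariant MV polytopes in Section \ref{subsec diag automorphism and MV} forces $h := \mu_{\Theta(\hatw)}$ to lie in $\mathfrak{h}^{\sigma}$. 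It therefore suffices to prove $\langle h, \hatw\hatpi_{i}\rangle = \langle h, \Theta(\hatw)\varpi_{i}\rangle$.

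Transposing each Weyl-group action onto $h$, the desired equality becomes
\begin{equation*}
\langle \hatw^{-1}h, \hatpi_{i}\rangle = \langle \Theta(\hatw)^{-1}h, \varpi_{i}\rangle.
\end{equation*}
Lemma \ref{lem basis res non-simply-laced}(d), applied to $\hatw^{-1}\in\hatW$ (and using that $\Theta$ is a group isomorphism, so $\Theta(\hatw^{-1})=\Theta(\hatw)^{-1}$), identifies $\hatw^{-1}h$ with $\Theta(\hatw)^{-1}h$; this common vector still lies in $\mathfrak{h}^{\sigma}$ since $\mathfrak{h}^{\sigma}$ is stable under $W^{\sigma}$. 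By Lemma \ref{lem basis res non-simply-laced}(b), $\hatpi_{i}$ is the restriction of $\varpi_{i}$ to $\mathfrak{h}^{\sigma}$, so the pairing of any element of $\mathfrak{h}^{\sigma}$ with $\hatpi_{i}$ equals its pairing with $\varpi_{i}$, and both sides of the displayed equation agree.

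There is no real obstacle here beyond carefully tracking which ambient vector space hosts each pairing; all the genuine content has already been absorbed into Lemma \ref{lem basis res non-simply-laced}, and the remainder is a direct unfolding of definitions.
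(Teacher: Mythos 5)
Your argument is correct, and it is worth noting that the paper itself gives no proof at all here --- it simply cites \cite[Remark 2.5.2]{NS}. Your write-up supplies the argument the paper leaves implicit. The key steps all check out: the value of a BZ datum at a chamber weight $w\varpi_i$ equals $\langle\mu_w, w\varpi_i\rangle$ by the GGMS description in Section \ref{subsec MV polytopes}; the construction of $\Phi$ literally sets $\hatmu_{\hatw}=\mu_{\Theta(\hatw)}$; $\sigma$-invariance of $P$ together with $\Theta(\hatw)\in W^{\sigma}$ forces $\mu_{\Theta(\hatw)}\in\mathfrak{h}^{\sigma}$; transposing to $\langle\hatw^{-1}h,\hatpi_i\rangle$ and $\langle\Theta(\hatw)^{-1}h,\varpi_i\rangle$ is a valid use of the contragredient action; and Lemma \ref{lem basis res non-simply-laced}(d) (applied to $\hatw^{-1}$, legitimately, since $\Theta$ is a group homomorphism) plus Lemma \ref{lem basis res non-simply-laced}(b) close the argument. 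In short, a correct and slightly more informative rendering of a step the authors outsourced to a reference.
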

\begin{proof}
See \cite[Remark 2.5.2]{NS}.
\end{proof}

\begin{lem}\label{lem hat cj P}
For any $i\in\hatI$, we have $c_{i}(\hatP)=c_{i}(P)$.
\end{lem}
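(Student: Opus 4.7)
The plan is a short direct computation that reduces both sides to the BZ datum of $P$ and then to the single identity $s_i^\sigma\varpi_i = s_i\varpi_i$ on the weight lattice of $\mathfrak{g}$. By definition, $c_i(\hatP) = \hatM_{\hatpi_i} - \hatM_{\hats_i\hatpi_i} - 1$ and $c_i(P) = M_{\varpi_i} - M_{s_i\varpi_i} - 1$, so it suffices to match the two BZ entries on each side.

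For the first step I would apply Lemma~\ref{lem M hat} with $\hatw = e$ and with $\hatw = \hats_i$ to obtain $\hatM_{\hatpi_i} = M_{\varpi_i}$ and $\hatM_{\hats_i\hatpi_i} = M_{\Theta(\hats_i)\varpi_i} = M_{s_i^\sigma\varpi_i}$, where $s_i^\sigma = \prod_{t=0}^{k_i-1} s_{\sigma^t(i)}$. This is a direct invocation of the previously proved lemma, using $\Theta(e) = e$ and $\Theta(\hats_i) = s_i^\sigma$.

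The second step is the identity $s_i^\sigma\varpi_i = s_i\varpi_i$. As already noted in the definition of $s_i^\sigma$, the reflections $s_{\sigma^t(i)}$ for different $t$ pairwise commute, because inspection of Figure~\ref{fig diagram automorphism} shows that the $\sigma$-orbit of any $i \in I$ consists of pairwise non-adjacent vertices of the Dynkin diagram. I may therefore reorder so that $s_i$ is the outermost (leftmost) factor, writing $s_i^\sigma = s_i \cdot \prod_{t=1}^{k_i-1} s_{\sigma^t(i)}$. For each $t \not\equiv 0 \pmod{k_i}$ one has $\sigma^t(i) \neq i$, hence $\langle h_{\sigma^t(i)},\varpi_i\rangle = 0$, so $s_{\sigma^t(i)}\varpi_i = \varpi_i$. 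Applying the factors to $\varpi_i$ from right to left, every inner reflection acts trivially, leaving $s_i^\sigma\varpi_i = s_i\varpi_i$. Therefore $M_{s_i^\sigma\varpi_i} = M_{s_i\varpi_i}$, and combining with the first step yields
$$c_i(\hatP) = M_{\varpi_i} - M_{s_i\varpi_i} - 1 = c_i(P),$$
as required. There is no genuine obstacle here: the lemma ultimately boils down to the elementary observation that, in the product defining $s_i^\sigma$, only the factor $s_i$ itself moves the fundamental weight $\varpi_i$.
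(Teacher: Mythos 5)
Your proof is correct and follows essentially the same route as the paper: invoke Lemma~\ref{lem M hat} to rewrite both $\hatM_{\hatpi_i}$ and $\hatM_{\hats_i\hatpi_i}$ in terms of $M_\bullet$, then observe $s_i^\sigma\varpi_i = s_i\varpi_i$. Your justification of the latter (via $\langle h_{\sigma^t(i)},\varpi_i\rangle=0$ for $t\neq 0$) is in fact slightly more precise than the paper's appeal to non-adjacency of the orbit vertices, but the argument is the same in substance.
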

\begin{proof}
By definition, $c_{i}(\hatP)= \hatM_{\hatpi_{i}}-\hatM_{\hats_{i}\hatpi_{i}}-1$. We have that $\hatM_{\hatpi_{i}}=M_{\varpi_{i}}$ and $\hatM_{\hats_{i}\hatpi_{i}}=M_{s_{i}^{\sigma}\varpi_{i}}$ (Lemma \ref{lem M hat}). By definition, we have $s_{i}^{\sigma}=s_{i}s_{\sigma(i)}\cdots s_{\sigma^{k_{i}-1}(i)}$. Note that $\sigma(i)$, $\ldots$, $\sigma^{k_{i}-1}(i)$ are not connected with $i$ in the Dynkin graph. Hence we have $M_{s_{i}^{\sigma}\varpi_{i}}=M_{s_{i}\varpi_{i}}$. Therefore we have
$c_{i}(\hatP)= M_{\varpi_{i}}-M_{s_{i}\varpi_{i}}-1=c_{i}(P)$.
\end{proof}

\begin{lem}\label{lem non simply lace key}
For any $\hatgamma\in\hat{\Gamma}_{j}$, we have
\begin{equation}\label{equ main thm formula non-simply-laced}
\hatM'_{\hatgamma}\leq\min\{\hatM_{\hatgamma},\hatM_{\hats_{j}\hatgamma}+c_{j}(\hatP)\langle\hath_{j},\hatgamma\rangle\}.
\end{equation}
\end{lem}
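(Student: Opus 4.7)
The plan is to reduce Lemma \ref{lem non simply lace key} to the simply-laced inequality established in Section \ref{subsec pf of thm 1 sc} by iterating $k$ times. Writing $j_t = \sigma^t(j)$ for $t = 0,\ldots,k-1$ with $k = k_j$, the indices $j_0,\ldots,j_{k-1}$ form an $A_1^k$ subdiagram, so the Kashiwara operators $\tildef_{j_t}$ commute pairwise and compose to $\tildef_j^{\sigma}$. Set $P_0 = P$ and $P_{s+1} = \tildef_{j_s}(P_s)$; by Theorem \ref{thm sigma_inv_MV and hat MV}, $P_k = \Phi^{-1}(\hatP')$. Denote by $M^{(s)}_{\bullet}$ the BZ datum of $P_s$.

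Fix a representation $\hatgamma = \hatw\hatpi_i$ with $\hats_j\hatw > \hatw$ and set $w = \Theta(\hatw) \in W^{\sigma}$; Lemma \ref{lem basis res non-simply-laced}(c) gives $s_{j_t}w > w$ for every $t$, so $\langle h_{j_t}, w\varpi_i\rangle \geq 0$. By Lemma \ref{lem M hat} the target inequality (\ref{equ main thm formula non-simply-laced}) is equivalent to
\[
M^{(k)}_{w\varpi_i} \;\leq\; \min\bigl\{M^{(0)}_{w\varpi_i},\; M^{(0)}_{s_{j_0}\cdots s_{j_{k-1}}w\varpi_i} + c_j(P)\langle\hath_j,\hatgamma\rangle\bigr\}.
\]
As a preliminary, one verifies $c_{j_t}(P_s) = c_j(P)$ for all $s$ and $t$: the equality $c_{j_t}(P) = c_j(P)$ follows from $\sigma$-invariance of $P$ (forcing $M_{\varpi_{j_t}} = M_{\varpi_j}$ and $M_{s_{j_t}\varpi_{j_t}} = M_{s_j\varpi_j}$), and for $s'\ne t$ the intermediate step $P_{s'}\to P_{s'+1}$ preserves both $M_{\varpi_{j_t}}$ and $M_{s_{j_t}\varpi_{j_t}}$, because these chamber weights lie in $\Gamma^{j_{s'}}$ by Lemma \ref{lem Gamma_j Gamma^j} and non-adjacency, so Theorem \ref{thm Kamnitzer} applies.

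The core computation introduces the telescoping sequence $\delta_s := s_{j_s}s_{j_{s+1}}\cdots s_{j_{k-1}}w\varpi_i$, satisfying $\delta_k = w\varpi_i$, $\delta_0 = s_{j_0}\cdots s_{j_{k-1}}w\varpi_i$, and $s_{j_s}\delta_{s+1} = \delta_s$. The simply-laced inequality of Section \ref{subsec pf of thm 1 sc}, applied at step $s$ to $\gamma = \delta_{s+1}$, gives
\[
M^{(s+1)}_{\delta_{s+1}} \;\leq\; M^{(s)}_{\delta_s} + c_j(P)\langle h_{j_s},\delta_{s+1}\rangle.
\]
Non-adjacency makes $s_{j_{s+1}},\ldots,s_{j_{k-1}}$ all fix $h_{j_s}$, so $\langle h_{j_s},\delta_{s+1}\rangle = \langle h_{j_s},w\varpi_i\rangle$. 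Telescoping from $s = 0$ to $s = k-1$, together with $\hath_j = \sum_t h_{j_t}$ (Lemma \ref{lem basis res non-simply-laced}(a)) and Lemma \ref{lem hat cj P}, yields the second bound in the min. The first bound $M^{(k)}_{w\varpi_i} \leq M^{(0)}_{w\varpi_i}$ is obtained by iterating the parallel inequality $M^{(s+1)}_{\gamma} \leq M^{(s)}_{\gamma}$ at $\gamma = w\varpi_i$.

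The one technical wrinkle occurs when $\langle h_{j_s},w\varpi_i\rangle = 0$ for some $s$, in which case $\delta_{s+1}$ lies in $\Gamma^{j_s}$ rather than $\Gamma_{j_s}$ and the simply-laced inequality does not literally apply. In that subcase, however, $s_{j_s}$ fixes $\delta_{s+1}$ as an element of $\mathfrak{h}^{\ast}$ (so $\delta_s = \delta_{s+1}$), and Theorem \ref{thm Kamnitzer} gives $M^{(s+1)}_{\delta_{s+1}} = M^{(s)}_{\delta_{s+1}}$, so the displayed inequality becomes a trivial equality with vanishing coefficient. The telescoping chain therefore goes through unchanged in all cases, completing the argument.
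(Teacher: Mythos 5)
Your proof is correct and follows the same essential route as the paper's: reduce (\ref{equ main thm formula non-simply-laced}) via Lemmas \ref{lem basis res non-simply-laced}, \ref{lem M hat}, \ref{lem hat cj P} to a statement about $\mathfrak{g}$, then apply the already-proved simply-laced inequality once for each factor $\tildef_{j_t}$ appearing in $\tildef_j^{\sigma}$. The difference is purely organizational, and in your favor. The paper handles only $k_j=2$ explicitly and splits into three subcases according to which of $\langle h_j,\gamma\rangle$, $\langle h_{\sigma(j)},\gamma\rangle$ vanish, remarking that $k_j=3$ is ``similar''; your telescoping sequence $\delta_s=s_{j_s}\cdots s_{j_{k-1}}w\varpi_i$ with $s_{j_s}\delta_{s+1}=\delta_s$ absorbs all the subcases into a single chain of one-step inequalities, and the observation that a vanishing pairing $\langle h_{j_s},w\varpi_i\rangle=0$ degenerates the corresponding step into the equality $M^{(s+1)}_{\delta_{s+1}}=M^{(s)}_{\delta_{s+1}}$ (via $\delta_s=\delta_{s+1}$, Lemma \ref{lem Gamma_j Gamma^j} and Theorem \ref{thm Kamnitzer}) makes the chain valid uniformly. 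All the auxiliary facts you invoke check out: mutual non-adjacency of the $\sigma$-orbit indices gives $\langle h_{j_s},\delta_{s+1}\rangle=\langle h_{j_s},w\varpi_i\rangle$; the constancy $c_{j_t}(P_s)=c_j(P)$ follows from $\sigma$-invariance of $P$ together with Theorem \ref{thm Kamnitzer} applied to $\varpi_{j_t},\,s_{j_t}\varpi_{j_t}\in\Gamma^{j_{s'}}$ for $s'\ne t$; and the first bound $M^{(k)}_{w\varpi_i}\le M^{(0)}_{w\varpi_i}$ is the monotonicity that Theorem \ref{thm main simply-laced case} plus Theorem \ref{thm Kamnitzer} give at each step. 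The only thing to say with more care is that ``the simply-laced inequality of Section \ref{subsec pf of thm 1 sc}'' is really the combination of Theorem \ref{thm main simply-laced case}, Lemma \ref{lem AM op in M}(c), and Theorem \ref{thm Kamnitzer}, so that both the $\Gamma_{j_s}$ and $\Gamma^{j_s}$ branches are covered; you are implicitly using both, and it is worth making that explicit.
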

\begin{proof}
Since $\hatgamma\in\hat{\Gamma}_{j}$, we have $\hatgamma=\hatw\hatpi_{i}$ where $\hats_{j}\hatw>\hatw$. Let $w=\Theta(w)$ and $\gamma=w\varpi_{i}$. Hence $\langle h_{j}^{\sigma},\gamma\rangle=\langle \hath_{j},\hatgamma\rangle>0$. By Lemma \ref{lem basis res non-simply-laced} (c), we deduce that $\langle h_{\sigma^{t}(j)},\gamma\rangle\geq 0$ for any $0\leq t\leq k_{j}-1$.

By Lemma \ref{lem basis res non-simply-laced}, \ref{lem M hat} and \ref{lem hat cj P}, we can deduce that (\ref{equ main thm formula non-simply-laced}) is equivalent to
\begin{equation}\label{equ alternative main nsl}
M'_{\gamma}\leq\min\{M_{\gamma},M_{s^{\sigma}_{j}\gamma}+c_{j}(P)\langle h^{\sigma}_{j},\gamma\rangle\}.
\end{equation}

Hence we can return to the simply-laced case and only need to prove (\ref{equ alternative main nsl}). In the following we only consider the case $k_{j}=2$. The case $k_{j}=3$ can be treated in the same way.

Recall that $f_{j}^{\sigma}=f_{j}f_{\sigma(j)}$. So $P'=f_{j}^{\sigma}(P)=f_{j}f_{\sigma(j)}(P)$. Let $P^{(1)}=P(M^{(1)}_{\bullet})=f_{\sigma(j)}(P)$. Then $P'=f_{j}(P^{(1)})$.

Since we have proved Theorem \ref{thm main simply-laced case} in the simply-laced case, we have
\begin{align*}
M'_{\gamma}& =M^{(1)}_{\gamma}\ \ \ \text{if } \langle h_{j},\gamma\rangle=0,\\
M'_{\gamma}& \leq\min\{M^{(1)}_{\gamma},M^{(1)}_{s_{j}\gamma}+c_{j}(P^{(1)})\langle h_{j},\gamma\rangle\}\ \ \ \text{if } \langle h_{j},\gamma\rangle>0.
\end{align*}

Recall $h_{j}^{\sigma}=h_{j}+h_{\sigma(j)}$ and note that $\langle h_{j}^{\sigma},\gamma\rangle > 0$. We need to consider the following three cases:

Case 1: $\langle h_{j},\gamma\rangle=0$ and $\langle h_{\sigma(j)},\gamma\rangle>0$. Now we have $M'_{\gamma}=M^{(1)}_{\gamma}$. Again by Theorem \ref{thm main simply-laced case} in the simply-laced case, we have
$$M^{(1)}_{\gamma}\leq\min\{M_{\gamma},M_{s_{\sigma(j)}\gamma}+c_{\sigma(j)}(P)\langle h_{\sigma(j)},\gamma\rangle\}.$$ We claim that $c_{\sigma(j)}(P)=c_{j}(P)$. In fact, $P\in\mathcal{MV}^{\sigma}$ implies that $M_{\gamma}=M_{\sigma(\gamma)}$ for all $\gamma\in\Gamma$. Hence we have
\begin{equation*}
c_{\sigma(j)}(P)=M_{\varpi_{\sigma(j)}}-M_{s_{\sigma(j)}\varpi_{\sigma(j)}}-1=M_{\varpi_{j}}-M_{s_{j}\varpi_{j}}-1=c_{j}(P).
\end{equation*}
Since $\langle h_{j},\gamma\rangle=0$, it is easy to see that $s_{\sigma(j)}\gamma=s_{j}^{\sigma}\gamma$, $\langle h_{\sigma(j)},\gamma\rangle=\langle h_{j}^{\sigma},\gamma\rangle$. Therefore (\ref{equ alternative main nsl}) holds in this case.

Case 2: $\langle h_{j},\gamma\rangle>0$ and $\langle h_{\sigma(j)},\gamma\rangle=0$. Now we have
$$M'_{\gamma} \leq\min\{M^{(1)}_{\gamma},M^{(1)}_{s_{j}\gamma}+c_{j}(P^{(1)})\langle h_{j},\gamma\rangle\}$$
We claim that $c_{j}(P^{(1)})=c_{j}(P)$. In fact, since $P^{(1)}=f_{\sigma(j)}(P)$ and $\varpi_{j},s_{j}\varpi_{j}\in\Gamma^{\sigma(j)}$, we have
\begin{equation*}
c_{j}(P^{(1)})=M^{(1)}_{\varpi_{j}}-M^{(1)}_{s_{j}\varpi_{j}}-1=M_{\varpi_{j}}-M_{s_{j}\varpi_{j}}-1=c_{j}(P).
\end{equation*}
Since $\langle h_{\sigma(j)},\gamma\rangle=0$, we have $\langle h_{\sigma(j)},s_{j}\gamma\rangle=0$ and $\langle h_{j},\gamma\rangle=\langle h_{j}^{\sigma},\gamma\rangle$. Hence $M^{(1)}_{\gamma}=M_{\gamma}$, $M^{(1)}_{s_{j}\gamma}=M_{s_{j}\gamma}$. Therefore (\ref{equ alternative main nsl}) holds in this case.

Case 3: $\langle h_{j},\gamma\rangle>0$ and $\langle h_{\sigma(j)},\gamma\rangle>0$.
Again by the theorem in the simply-laced case, we have
\begin{align*}
M^{(1)}_{\gamma}&\leq\min\{M_{\gamma},M_{s_{\sigma(j)}\gamma}+c_{\sigma(j)}(P)\langle h_{\sigma(j)},\gamma\rangle\},\\
M^{(1)}_{s_{j}\gamma}&\leq\min\{M_{s_{j}\gamma},M_{s_{\sigma(j)}s_{j}\gamma}+c_{\sigma(j)}(P)\langle h_{\sigma(j)},s_{j}\gamma\rangle\}.
\end{align*}
We can deduce that
\begin{equation}\label{equ non-sim 1}
M'_{\gamma}\leq\min\{M_{\gamma},M_{s_{\sigma(j)}s_{j}\gamma}+c_{\sigma(j)}(P)\langle h_{\sigma(j)},s_{j}\gamma\rangle
+c_{j}(P^{(1)})\langle h_{j},\gamma\rangle\}.
\end{equation}
As in the previous two cases we have $c_{j}(P^{(1)})=c_{j}(P)$ and $c_{\sigma(j)}(P)=c_{j}(P)$.
Note that $\langle h_{\sigma(j)},s_{j}\gamma\rangle=\langle h_{\sigma(j)},\gamma\rangle$. So (\ref{equ non-sim 1}) can be simplified as
\begin{equation}\label{equ non-sim 2}
M'_{\gamma}\leq\min\{M_{\gamma},M_{s_{\sigma(j)}s_{j}\gamma}+c_{j}(P)\langle h_{\sigma(j)}+h_{j},\gamma\rangle\},
\end{equation}
which is the same as (\ref{equ alternative main nsl}).
\end{proof}

In view of Remark \ref{rem enough to prove main}, the proof of Theorem \ref{thm main simply-laced case} is now completed.

\subsection{Proof of Theorem \ref{thm minuscule} in the non-simply-laced case}\label{subsec pf of thm 2 nsc}
We keep the notations in the previous subsection. Assuming that $\langle \hath_{j},\hatgamma\rangle=\langle h_{j}^{\sigma},\hatgamma\rangle=1$, we need to prove
\begin{equation}\label{equ non-sim thm 2}
\hatM'_{\hatgamma}=\min\{\hatM_{\hatgamma},\hatM_{\hats_{j}\hatgamma}+c_{j}(\hatP)\}.
\end{equation}

Since $\langle \hath_{j},\hatgamma\rangle=1$, we know that $\hatgamma\in\hat{\Gamma}_{j}$. Hence  $\hatgamma=\hatw\hatpi_{i}$ where $\hats_{j}\hatw>\hatw$. Let $w=\Theta(w)$ and $\gamma=w\varpi_{i}$. By the same reason as in the proof of Lemma \ref{lem non simply lace key}, (\ref{equ non-sim thm 2}) is equivalent to
\begin{equation}\label{equ non-sim thm 2 alternative}
M'_{\gamma}=\min\{M_{\gamma},M_{s_{j}^{\sigma}\gamma}+c_{j}(P)\}.
\end{equation}

By Lemma \ref{lem basis res non-simply-laced} (c) we know that $\langle h_{\sigma^{t}(j)},\gamma\rangle\geq 0$ for any $0\leq t\leq k_{j}-1$. Since $h_{j}^{\sigma}=\sum_{t=0}^{k_{i}-1}h_{\sigma^{t}(i)}$ and $\langle h_{j}^{\sigma},\hatgamma\rangle=1$, we deduce that there exist a unique $t_{1}$ such that $\langle h_{\sigma^{t_{1}}(j)},\gamma\rangle=1$ and for all other $t$, $\langle h_{\sigma^{t}(j)},\gamma\rangle=0$.

Now we are again in the simply-laced case. In the following we assume $k_{j}=2$. The case $k_{j}=3$ can be proved similarly.

Recall that $P'=f_{j}^{\sigma}(P)=f_{j}f_{\sigma(j)}(P)$. As in the last subsection, Let $P^{(1)}=P(M^{(1)}_{\bullet})=f_{\sigma(j)}(P)$. So we have $P'=f_{j}(P^{(1)})$.

Since we have proved the theorem in the simply-laced case, we know that
\begin{equation*}
M'_{\gamma}=\begin{cases}
            M^{(1)}_{\gamma} & \text{if } \langle h_{j},\gamma\rangle=0,\\
            \min\{M^{(1)}_{\gamma},M^{(1)}_{s_{j}\gamma}+c_{j}(P^{(1)})\} & \text{if } \langle h_{j},\gamma\rangle=1.
            \end{cases}
\end{equation*}

If $\langle h_{j},\gamma\rangle=0$, then $\langle h_{\sigma(j)},\gamma\rangle=1$. Again by the theorem in the simply-laced case, we have
$$M^{(1)}_{\gamma}=\min\{M_{\gamma},M_{s_{\sigma(j)}\gamma}+c_{\sigma(j)}(P)\}.$$
Note that $s_{\sigma(j)}\gamma=s_{j}^{\sigma}\gamma$ because $s_{j}\gamma=\gamma$. So $M_{s_{\sigma(j)}\gamma}=M_{s_{j}^{\sigma}\gamma}$. In the previous subsection we have proved that $c_{\sigma(j)}(P)=c_{j}(P)$. Thus (\ref{equ non-sim thm 2 alternative}) is proved.

If $\langle h_{j},\gamma\rangle=1$, then $\langle h_{\sigma(j)},\gamma\rangle=0$ and $\langle h_{\sigma(j)},s_{j}\gamma\rangle=0$. In this case we have $M^{(1)}_{\gamma}=M_{\gamma}$, $M^{(1)}_{s_{j}\gamma}=M_{s_{j}\gamma}=M_{s_{j}^{\sigma}\gamma}$ and $c_{j}(P^{(1)})=c_{j}(P)$ (proved in the previous subsection). Hence (\ref{equ non-sim thm 2 alternative}) still holds.

\subsection{Proof of Theorem \ref{thm simply-laced case} in the non-simply-laced case}\label{subsec pf of thm 3 nsc}
Let $\hatP=\hatP(\hatM_{\bullet})$ be an MV polytope for $\hatfrakg$ and $j\in\hatI$. Let us first prove
\begin{equation}\label{equ non-sim thm 3 a}
m(j,\hatP):=\hatM_{\hatpi_{j}}-\hatM_{-\hats_{j}\hatpi_{j}}-\hatM_{\hats_{j}\hatpi_{j}}\geq 0.
\end{equation}

Let $P=P(M_{\bullet})=\Phi^{-1}(\hatP)$ which is an MV polytope for $\mathfrak{g}$. By Lemma \ref{lem M hat}, we know that $\hatM_{\hatpi_{j}}=M_{\varpi_{j}}$, $\hatM_{-\hats_{j}\hatpi_{j}}=M_{-s_{j}^{\sigma}\varpi_{j}}$ and $\hatM_{\hats_{j}\hatpi_{j}}=M_{s_{j}^{\sigma}\varpi_{j}}$. Since $s_{j}^{\sigma}\varpi_{j}=s_{j}\varpi_{j}$, we have
\begin{equation}
m(j,\hatP)=m(j,P)=M_{\varpi_{j}}-M_{-s_{j}\varpi_{j}}-M_{s_{j}\varpi_{j}},
\end{equation}
which is positive by the theorem in the simply-laced case.

Now assume that $k$ is an integer such that $k>m(j,\hatP)$. Let $\hatP^{(k)}=\hatP(\hatM^{(k)}_{\bullet})=\hatf_{j}^{k}(\hatP)$ and $\hatP^{(k+1)}=\hatP(\hatM^{(k+1)}_{\bullet})=\hatf_{j}^{k+1}(\hatP)$. We are going to prove
\begin{equation}\label{equ non-sim thm3 b}
\hatM^{(k+1)}_{\hatgamma}=\begin{cases}
            & \hatM^{(k)}_{\hatgamma},\ \ \ \text{if } \hatgamma\in\hat{\Gamma}^{j}\\
            & \hatM^{(k)}_{\hats_{j}\hatgamma}+c_{j}(\hatP^{(k)})\langle \hath_{j},\hatgamma\rangle, \ \ \ \text{if } \hatgamma\in\hat{\Gamma}_{j}.
            \end{cases}
\end{equation}

We only need to prove for the case $\hatgamma\in\hat{\Gamma}_{j}$. We have $\hatgamma=\hatw\hatpi_{i}$ where $\hats_{j}\hatw>\hatw$. Let $w=\Theta(w)$ and $\gamma=w\varpi_{i}$. Then, as in the previous sections, we just need to prove
\begin{equation}\label{equ non-sim thm 3 c}
M^{(k+1)}_{\gamma}=M^{(k)}_{s_{j}\gamma}+c_{j}(P^{(k)})\langle h_{j}^{\sigma},\gamma\rangle.
\end{equation}

In the following we assume $k_{j}=2$. The case $k_{j}=3$ is similar. Let $P^{(k+1)}=P(M^{(k+1)}_{\bullet})=\Phi^{-1}(\hatP^{(k+1)})=(\tildef_{j}^{\sigma})^{k+1}(P)=\tildef_{j}^{k+1}\tildef_{\sigma(j)}^{k+1}(P)$. Let $P^{(k)}=P(M^{(k)}_{\bullet})=\Phi^{-1}(\hatP^{(k)})=(\tildef_{j}^{\sigma})^{k}(P)$. So $P^{(k+1)}=\tildef_{j}^{\sigma}(P^{(k)})=\tildef_{j}\tildef_{\sigma(j)}(P^{(k)})$. Let $P''=P(M''_{\bullet})=\tildef_{\sigma(j)}(P^{(k)})$, then $P^{(k+1)}=\tildef_{j}(P'')$.

We know that $P^{(k)}=\tildef_{j}^{k}\tildef_{\sigma(j)}^{k}(P)$. Let $\bar{P}=P(\bar{M}_{\bullet})=\tildef_{j}^{k}(P)$, hence $P^{(k)}=\tildef_{\sigma(j)}^{k}(\bar{P})$.

We claim that $m(\sigma(j),\bar{P})=m(j,P)$. In fact, we have
$$\langle h_{j},\varpi_{\sigma(j)}\rangle=\langle h_{j},-s_{\sigma(j)}\varpi_{\sigma(j)}\rangle=\langle h_{j},s_{\sigma(j)}\varpi_{\sigma(j)}\rangle=0.$$
Noting that $\bar{P}=\tildef_{j}^{k}(P)$, we have $\bar{M}_{\varpi_{\sigma(j)}}=M_{\varpi_{\sigma(j)}}$, $\bar{M}_{-s_{\sigma(j)}\varpi_{\sigma(j)}}=M_{-s_{\sigma(j)}\varpi_{\sigma(j)}}$ and $\bar{M}_{s_{\sigma(j)}\varpi_{\sigma(j)}}=M_{s_{\sigma(j)}\varpi_{\sigma(j)}}$. Thus we deduce that
\begin{equation*}
\begin{split}
m(\sigma(j),\bar{P})&=\bar{M}_{\varpi_{\sigma(j)}}-\bar{M}_{-s_{\sigma(j)}\varpi_{\sigma(j)}}-\bar{M}_{s_{\sigma(j)}\varpi_{\sigma(j)}}\\
&=M_{\varpi_{\sigma(j)}}-M_{-s_{\sigma(j)}\varpi_{\sigma(j)}}-M_{s_{\sigma(j)}\varpi_{\sigma(j)}}\\
&=m(\sigma(j),P)=m(j,P),
\end{split}
\end{equation*}
where the last equality holds because $P\in\mathcal{MV}^{\sigma}$.

Applying the theorem in the simply-laced case for $\bar{P}$ and $\sigma(j)$, we have
\begin{equation}\label{equ non-sim thm 3 d}
M''_{\gamma}=\begin{cases}
                   & M^{(k)}_{\gamma}\ \ \text{ if } \gamma\in\Gamma^{\sigma(j)}, \\
                   & M^{(k)}_{s_{\sigma(j)}\gamma}+c_{\sigma(j)}(P^{(k)})\langle h_{\sigma(j)},\gamma\rangle \ \ \text{ if } \gamma\in\Gamma_{\sigma(j)}.
                  \end{cases}
\end{equation}

Let $\tilde{P}=P(\tilde{M}_{\bullet})=\tildef_{\sigma(j)}^{k+1}(P)$. So $P''=\tildef_{j}^{k}(\tilde{P})$.

As above we can prove that $m(j,\tilde{P})=m(\sigma(j),P)=m(j,P)$. Then we apply the theorem in the simply-laced case for $\tilde{P}$ and $j$ and deduce that
\begin{equation}\label{equ non-sim thm 3 e}
M^{(k+1)}_{\gamma}=\begin{cases}
                   & M''_{\gamma}\ \ \text{ if } \gamma\in\Gamma^{j}, \\
                   & M''_{s_{j}\gamma}+c_{j}(P'')\langle h_{j},\gamma\rangle \ \ \text{ if } \gamma\in\Gamma_{j}.
                  \end{cases}
\end{equation}

To prove (\ref{equ non-sim thm 3 c}), we only need to combine (\ref{equ non-sim thm 3 d}) and (\ref{equ non-sim thm 3 e}) and there are three cases to consider. Case 1: $\gamma\in\Gamma^{\sigma(j)}$ and $\gamma\in\Gamma_{j}$. Case 2. $\gamma\in\Gamma_{\sigma(j)}$ and $\gamma\in\Gamma^{j}$. Case 3: $\gamma\in\Gamma_{\sigma(j)}$ and $\gamma\in\Gamma_{j}$. The remaining part is completely similar to the last part in the proof of Lemma \ref{lem non simply lace key} and hence is omitted.

\bigskip
\par\noindent {\bf Acknowledgments.}
The first author would like to thank Prof. Henning Krause and Prof. Claus. M. Ringel for their consistent support when he was a postdoctor in Fakult\"{a}t f\"{u}r Mathematik, Universit\"{a}t Bielefeld, where a large part of this work was done. He is also very grateful to Prof. Jan Schr\"{o}er for interesting discussions. The second author would like to thank Prof. Jan Schr\"{o}er for valuable support.

\bibliographystyle{amsplain}

\end{document}